\documentclass[3p,11pt]{elsarticle}
\usepackage{graphicx,subfig}
\usepackage{stmaryrd}
\usepackage{commath}
\usepackage{url}
\usepackage{amsmath}
\usepackage{amssymb}
\usepackage{amsthm}
\usepackage{mathrsfs}
\usepackage[numbers]{natbib}
\usepackage[hidelinks]{hyperref}
\hypersetup{
	colorlinks   = true, 
	urlcolor     = red, 
	linkcolor    = red, 
	citecolor   = red 
}
\usepackage{cleveref}

\DeclareFontFamily{U}{matha}{\hyphenchar\font45}
\DeclareFontShape{U}{matha}{m}{n}{
	<5> <6> <7> <8> <9> <10> gen * matha
	<10.95> matha10 <12> <14.4> <17.28> <20.74> <24.88> matha12
}{}
\DeclareSymbolFont{matha}{U}{matha}{m}{n}
\DeclareFontSubstitution{U}{matha}{m}{n}

\DeclareFontFamily{U}{mathx}{\hyphenchar\font45}
\DeclareFontShape{U}{mathx}{m}{n}{
	<5> <6> <7> <8> <9> <10>
	<10.95> <12> <14.4> <17.28> <20.74> <24.88>
	mathx10
}{}
\DeclareSymbolFont{mathx}{U}{mathx}{m}{n}
\DeclareFontSubstitution{U}{mathx}{m}{n}

\DeclareMathDelimiter{\vvvert}{0}{matha}{"7E}{mathx}{"17}

\makeatletter
\def\ps@pprintTitle{%
	\let\@oddhead\@empty
	\let\@evenhead\@empty
	\def\@oddfoot{}%
	\let\@evenfoot\@oddfoot}

\newcommand{\tnorm}[1]{{\left\vvvert #1 \right\vvvert}}
\newcommand{\jump}[1]{\llbracket #1 \rrbracket}
\newcommand{\av}[1]{\{\!\!\{#1\}\!\!\}}

\usepackage{xspace,color}

\newtheorem{thm}{Theorem}[section]
\newtheorem{lem}{Lemma}[section]

\newtheorem{cor}{Corollary}[section]

\theoremstyle{definition}
\newtheorem{defn}{Definition}[section]

\theoremstyle{remark}
\newtheorem{rem}{Remark}[section]

\begin{document}
	
\begin{frontmatter}	
	\title{Analysis of an exactly mass conserving space-time hybridized
		discontinuous Galerkin method for the time-dependent Navier--Stokes
		equations}
	
	\author[KLAK]{Keegan L. A. Kirk\fnref{label1}}
	\ead{k4kirk@uwaterloo.ca}
	\fntext[label1]{\url{https://orcid.org/0000-0003-1190-6708}}
	\address[KLAK]{Department of Applied Mathematics, University of
		Waterloo, Canada} 
	
	\author[TLH]{Tam\'as L. Horv\'ath\fnref{label2}}
	\ead{thorvath@oakland.edu}
	\fntext[label2]{\url{https://orcid.org/0000-0001-5294-5362}}
	\address[TLH]{Department of Mathematics and Statistics, Oakland
		University, U.S.A.}
	
	\author[KLAK]{Sander Rhebergen\fnref{label3}}
	\ead{srheberg@uwaterloo.ca}
	\fntext[label3]{\url{https://orcid.org/0000-0001-6036-0356}}  
	
	\begin{abstract}
		We introduce and analyze a space-time hybridized discontinuous Galerkin 
method for the evolutionary Navier--Stokes equations. Key features of the 
numerical scheme include point-wise mass conservation, energy stability, 
and pressure robustness. We prove that there exists a solution to the 
resulting nonlinear algebraic system in two and three spatial dimensions, 
and that this solution is unique in two spatial dimensions under a small 
data assumption. A priori error estimates are derived for the 
velocity in a mesh-dependent energy norm.
	\end{abstract}

	\begin{keyword}
		Navier--Stokes \sep space-time \sep hybridized
		\sep discontinuous Galerkin \sep finite element method
	\end{keyword}
	
\end{frontmatter}

	\section{Introduction}
	\label{sec:introduction}
	
	In this article, we are concerned with the numerical solution of the transient 
	Navier--Stokes 
	system posed on a convex polygonal ($d=2$) or polyhedral ($d=2$) domain 
	$\Omega \subset \mathbb{R}^d$, $d \in \cbr{2,3}$: given a suitably chosen body 
	force $f$, kinematic viscosity $0 <\nu \le 1$, and 
	initial data $u_0$, find $(u,p)$ such that
	\begin{subequations} \label{eq:ns_equations}
		\begin{align}
			\label{eq:ns_equations_a} \partial_t u - \nu \Delta u + \nabla \cdot( u 
			\otimes u) + \nabla p &= f, && 
			\hspace{-10mm}
			\text{in } \Omega\times (0,T], \\
			\nabla \cdot u &= 0, && \hspace{-10mm}\text{in } \Omega\times (0,T], \\
			u &=0, && \hspace{-10mm}\text{on }  \partial \Omega\times (0,T], \\
			u(x,0) &= u_0(x), && \hspace{-10mm}\text{in } \Omega.
		\end{align}
	\end{subequations}
	
	We analyze a space-time hybridizable 
	discontinuous Galerkin 
	(HDG) scheme for the evolutionary Navier--Stokes system \cref{eq:ns_equations} 
	based on the exactly 
	mass conserving, pointwise solenoidal discretization
	of Rhebergen and Wells \cite{Rhebergen:2018a}. Our analysis shows that the 
	resulting nonlinear algebraic system of equations is solvable for $d \in \cbr{2,3}$, uniquely solvable if $d=2$ under mild 
	conditions on the problem data, and that the numerical method converges 
	optimally in a mesh dependent norm. Moreover, the velocity error estimates are 
	independent of the pressure and inverse powers of $\nu$.

	The article is organized as follows: in the remainder of 
	\Cref{sec:introduction}, we give an overview of relevant literature, set 
	notation, introduce the space-time HDG 
	discretization, and introduce our main results. In \Cref{sec:preliminaries}, we 
	study the conservation properties of the numerical scheme and introduce 
	analysis tools that we will require. We consider the well-posedness of the  
	nonlinear algebraic system arising from the
	numerical scheme in \Cref{sec:well_posedness}. \Cref{sec:vel_err} is dedicated 
	to the error analysis for the velocity. We present a numerical test case with a manufactured solution in 
	\Cref{sec:numerical_results} to verify the theory, and finally we draw 
	conclusions in \Cref{sec:conclusion}. 
	
	\subsection{Related results}
	
	Pressure-robust discretizations for the solution of the Stokes and 
	Navier--Stokes equations have garnered much recent interest. These methods
	mimic at the discrete level the fundamental invariance property of
	incompressible flows that perturbing the external forcing term by a 
	gradient
	field affects only the pressure, and not the
	velocity~\cite{John:2017}. A consequence of this invariance property is that 
	the velocity error estimates are independent of
	the best approximation error of the pressure scaled by the inverse of
	the viscosity, which is in contrast to non-pressure-robust finite element 
	methods for
	incompressible flows. Classic examples of non-pressure-robust finite element 
	methods include the Taylor--Hood finite
	element~\cite{Hood:1974}, Crouzeix--Raviart~\cite{Crouzeix:1973}, MINI
	elements~\cite{Arnold:1984}, and discontinuous Galerkin
	methods~\cite{Cockburn:2002, Cockburn:2003, Pietro:book}.
	
	
	The use of the discontinuous Galerkin method as a higher-order time stepping 
	scheme has had much success in application to both parabolic and hyperbolic 
	equations 
	\cite{Chrysafinos:2006,Chrysafinos:2008,Chrysafinos:2010,Feistauer:2007, 
		Feistauer:2011, Dolejsi:book, Rhebergen:2013b, Sosa:2020,
		Thomee:book,
		Vegt:2002}. We 
	refer to the book by Thom\'{e}e 
	\cite{Thomee:book} for an introduction to the subject. Moreover, when combined with the 
	discontinuous Galerkin method in space, the resulting 
	space-time discretization is fully conservative, allows for hp-additivity in 
	space-time, and can achieve higher-order accuracy in both space and time 
	\cite{Feistauer:2007,Feistauer:2011,Vegt:2002}. 
	Furthermore, the unified treatment of the spatial and temporal discretizations 
	makes it an excellent candidate for the solution of partial differential 
	equations on time-dependent domains; see e.g. 
	\cite{Horvath:2019,Horvath:2020, Kirk:2019, 
		Rhebergen:2012,Rhebergen:2013b,Sudirham:2008} 
	for further developments 
	along this direction.

	Despite its advantages, use of the discontinuous Galerkin method in both 
	the spatial and temporal discretizations introduces a significant 
	computational burden compared to traditional time stepping methods. In 
	particular, the number of globally coupled degrees of freedom is 
	$\mathcal{O}(p^{d+1})$, where $p$
	denotes the polynomial order and $d$ is the spatial dimension of the
	problem under consideration. However, 
	the number of globally coupled degrees of freedom may be reduced through hybridization \cite{Cockburn:2009}. Approximate traces of the
	solution on element facets are introduced as new unknowns in the problem. The resulting linear
	system may then be reduced through static condensation to a global
	system of size $\mathcal{O}(p^{d})$ for only these approximate traces. This 
	strategy was employed by Rhebergen and
	Cockburn~\cite{Rhebergen:2012,Rhebergen:2013} who introduced the
	space-time hybridizable DG (HDG) method to alleviate the computational burden
	of space-time DG.
	
	For DG time stepping in the context of incompressible flow problems, we refer 
	to 
	\cite{Ahmed:2020,Chrysafinos:2010,Sudirham:2008,Rhebergen:2012,Rhebergen:2013b,Horvath:2019,Horvath:2020}.
	The references \cite{Ahmed:2020,Chrysafinos:2010} introduce DG time stepping 
	schemes combined with inf-sup stable conforming finite element methods for 
	incompressible flow problems. The references \cite{Sudirham:2008,Rhebergen:2013b} introduce space-time DG schemes for incompressible flows on time-dependent domains, while 
	\cite{Rhebergen:2012,Horvath:2019,Horvath:2020} introduce space-time HDG 
	schemes for 
	incompressible flows on time-dependent domains. Of 
	particular importance is the work of Chrysafinos and Walkington 
	\cite{Chrysafinos:2010}, wherein fine 
	properties of polynomials are used
	to develop tools essential to our analysis.

	\subsection{Notation}
	
	We use standard notation for Lebesgue and Sobolev spaces: given a
	bounded measurable set $D$, we denote by $L^p(D)$ the space of
	$p$-integrable functions. When $p=2$, we denote the $L^2(D)$ inner
	product by $(\cdot,\cdot)_D$. We denote by $W^{k,p}(D)$ the Sobolev
	space of $p$-integrable functions whose distributional derivatives up to order $k$ are
	$p$-integrable.  When $p=2$, we write $W^{k,p}(D) = H^k(D)$. We denote
	by $\gamma:H^{s+1/2}(D) \to H^s(\partial D)$ is the trace operator. We
	define $H_0^1(D)$ to be the subspace of $H^1(D)$ of functions with
	vanishing trace on the boundary of $D$.  We denote the space of
	polynomials of degree $k\ge0$ on $D$ by $P_k(D)$. We use 
	standard notation for spaces of vector valued functions with $d$ components, e.g. $L^2(D)^d$, $H^k(D)^d$, $P^k(D)^d$, etc.
	At times we drop the superscript for convenience, e.g. we denote by $\norm{\cdot}_{L^2(\Omega)}$  
	the norm on both $L^2(\Omega)$ and $L^2(\Omega)^d$.
	
	Next, let $U$ be a Banach space, $I = [a,b]$ an interval in $\mathbb{R}$, and $1 \le p < \infty$.
	We denote by
	$L^p(I;U)$ the Bochner space of $p$-integrable functions defined on 
	$I$ taking values in $U$. 
	When $p = \infty$, we denote by $L^{\infty}(I;U)$ the Bochner space of 
	essentially bounded functions taking values in $U$ and by $C(I;U)$ the space
	of (time) continuous functions taking values in $U$. By $H^k(I;U)$, we 
	denote the Bochner-Sobolev space for $k \ge 1$:
	\begin{equation*}
		H^k(I;U) := \cbr{u \in L^2(I;U) \; | \; \od[j]{u}{t} \in L^2(I;U), \; j 
			= 1, \dots , k},
	\end{equation*}
	equipped with its usual norm.
	Finally, we let
	$P_k(I;U)$ denote the space of 
	polynomials of degree $k \ge 0$ in time taking values in $U$.
	
	\subsection{The continuous problem}
	
	We will begin our discussion of the Navier--Stokes system with the theory of 
	weak solutions \cite{Temam:book}. 
	We define two function spaces, $H$ and $V$, as follows:
	\begin{subequations}
		\begin{align}
			\label{eq:defH}
			H &= \cbr[1]{u \in L^2(\Omega)^d \; | \; \nabla \cdot u = 0 \text{ and } u 
				\cdot n|_{\partial \Omega} = 0}, \\
			V &= \cbr[1]{u \in H_0^1(\Omega)^d \; | \; \nabla \cdot u = 0}.    
		\end{align}
	\end{subequations}
	Note that
	$H \subset H(\text{div};\Omega):=\cbr{v \in L^2(\Omega)^d\,|\, \nabla \cdot
		u \in L^2(\Omega)}$. We equip $H$ and $V$ with the standard norms on $L^2(\Omega)^d$ and $H_0^1(\Omega)^d$, respectively.
	By testing \cref{eq:ns_equations_a} with $v\in V$ and integrating by parts, we arrive at the following abstract ODE: find $u \in L^2(0,T;V) \cap 
	L^{\infty}(0,T;H)$ such that for a.e. $t \in (0,T]$,
	\begin{subequations} \label{eq:weak_sol}
		\begin{align}
			\langle \od{u}{t}, v \rangle_{V'\times V} + \nu(\nabla u, \nabla v) \; + ((u 
			\cdot 
			\nabla)u, v)  &= 
			\langle f, v \rangle_{V' \times V}, \quad \forall v \in V, \\
			u(0)&= u_0.
		\end{align}
	\end{subequations}
	\begin{rem}[On the regularity of weak solutions and consistency]
		We require at least $(u,p) \in 
		H^1(0,T;L^2(\Omega)^d) 
		\cap 
		L^2(0,T; H^{\frac{3}{2} + \epsilon}(\Omega)^d\cap V) \times L^2(0,T;H^1(\Omega)) \cap 
		L^2(0,T;L^2_0(\Omega)) $ for the consistency of the space-time HDG method. Thus, we restrict our attention to \emph{strong solutions}. We assume that (at least) $f \in 
		L^2(0,T;H)$ 
		and $u_0 \in V$. Provided
		the problem data is sufficiently small, the following result on strong solutions taken from \cite[Theorem 
		5.4]{Chrysafinos:2010} holds:
	\end{rem}
	\begin{thm} \label{thm:strong_solution}
		Let $\Omega \subset \mathbb{R}^3$ be a convex polyhedral domain. There exists a $C>0$, dependent on the final time $T$, such that for $u_0 \in V$ and $f \in L^2(0,T;H)$ satisfying
		\begin{equation}\label{eq:small_data_1}
			\norm{u_0}_{V}^2 + \frac{1}{\nu} \norm{f}_{L^2(0,T;L^2(\Omega))}^2 \le C\nu^2,
		\end{equation}
		there exists a unique \emph{strong solution} of \cref{eq:weak_sol} with $(u,p) \in L^{\infty}(0,T;V) \cap 
		L^2(0,T;H^2(\Omega)^d\cap V) \times L^2(0,T;H^1(\Omega)) \cap 
		L^2(0,T;L_0^2(\Omega))$ and $\partial_t u \in L^2(0,T;H)$ such that
		\begin{equation} \label{eq:energ_est_strong}
			\norm{u}_{L^{\infty}(0,T;V)}^2 + \nu 
			\norm{u}_{L^2(0,T;H^2(\Omega))}^2
			\lesssim \nu^2, \quad 	\norm{\partial_t u}^2_{L^2(0,T;L^2(\Omega))} \lesssim \nu^3.
		\end{equation}
	\end{thm}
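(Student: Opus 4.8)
Since \Cref{thm:strong_solution} is quoted from \cite[Theorem 5.4]{Chrysafinos:2010}, one option is simply to cite that reference; I instead sketch a self-contained argument along classical lines. The plan is to construct the solution by a spectral Galerkin method based on the eigenfunctions $\{w_k\}$ of the Stokes operator $A=-P\Delta$, where $P$ is the Leray projector onto $H$; these form an orthonormal basis of $H$, orthogonal in $V$, with $w_k\in H^2(\Omega)^d\cap V$. Writing $u_m(t)=\sum_{k=1}^m g_{km}(t)w_k$, the Galerkin equations are an ODE system with locally Lipschitz right-hand side, so a local-in-time solution exists; global existence on $[0,T]$ will follow once $m$-uniform a priori bounds are in hand, and those bounds will also drive the passage to the limit. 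The two-dimensional statement is contained in the three-dimensional one after the (easier) higher-order estimates below, so I focus on $d=3$.

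The heart of the matter is the a priori bounds, and this is where \cref{eq:small_data_1} enters. Testing with $u_m$ and using Poincar\'e's inequality yields the basic energy bound $\|u_m\|_{L^\infty(0,T;H)}^2+\nu\|u_m\|_{L^2(0,T;V)}^2\lesssim\|u_0\|_H^2+\nu^{-1}\|f\|_{L^2(0,T;V')}^2$. Testing with $Au_m$ gives
\begin{equation*}
\tfrac12\tfrac{d}{dt}\|\nabla u_m\|_{L^2}^2+\nu\|Au_m\|_{L^2}^2=(f,Au_m)-((u_m\cdot\nabla)u_m,Au_m).
\end{equation*}
In two dimensions the trilinear term is absorbed directly into $\nu\|Au_m\|^2$ up to a lower-order factor; in three dimensions one only has $|((u_m\cdot\nabla)u_m,Au_m)|\lesssim\|\nabla u_m\|^{3/2}\|Au_m\|^{3/2}$, whose Young-inequality bound produces a term $C\nu^{-3}\|\nabla u_m\|^6$ on the right that is not absorbable for large data. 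The remedy is a continuation (bootstrap) argument: on any interval on which $\|\nabla u_m(t)\|^2\le K\nu^2$ one has $C\nu^{-3}\|\nabla u_m\|^6\le \tfrac12\nu\lambda_1\|\nabla u_m\|^2\le\tfrac12\nu\|Au_m\|^2$, so the identity closes into
\begin{equation*}
\tfrac{d}{dt}\|\nabla u_m\|_{L^2}^2+\nu\|Au_m\|_{L^2}^2+\nu\lambda_1\|\nabla u_m\|_{L^2}^2\lesssim\nu^{-1}\|f\|_{L^2}^2,
\end{equation*}
and \cref{eq:small_data_1}, together with $\|\nabla u_m(0)\|^2\le C\nu^2<K\nu^2$, forces $\|\nabla u_m(t)\|^2\le\tfrac{K}{2}\nu^2$ on that interval provided $C$ is small enough; the set where $\|\nabla u_m(t)\|^2\le K\nu^2$ is then open, closed and nonempty in $[0,T]$, hence equal to $[0,T]$. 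Integrating the displayed inequality and invoking convexity of $\Omega$ for the elliptic-regularity bound $\|u_m\|_{H^2}\lesssim\|Au_m\|$ yields $\|u_m\|_{L^\infty(0,T;V)}^2+\nu\|u_m\|_{L^2(0,T;H^2)}^2\lesssim\nu^2$. Finally, testing with $\partial_t u_m$, bounding $\int_0^T((u_m\cdot\nabla)u_m,\partial_t u_m)$ by Agmon's inequality $\|u_m\|_{L^\infty}^2\lesssim\|\nabla u_m\|\|Au_m\|$ and inserting the bounds just obtained, gives $\|\partial_t u_m\|_{L^2(0,T;H)}^2\lesssim\nu^3$.

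With these $m$-uniform bounds, passage to the limit is standard: weak-$*$ limits exist in $L^\infty(0,T;V)$, weak limits in $L^2(0,T;H^2(\Omega)^d\cap V)$ and in $L^2(0,T;H)$ for the time derivative, and the Aubin--Lions lemma furnishes strong convergence in $L^2(0,T;V)$, which is enough to pass to the limit in the quadratic term $\nabla\cdot(u_m\otimes u_m)$; the limit $u$ satisfies \cref{eq:weak_sol} and inherits the stated bounds (and $u\in C([0,T];V)$ follows from $u\in L^2(0,T;H^2)$, $\partial_t u\in L^2(0,T;H)$). The pressure is recovered by the de~Rham theorem applied to $f-\partial_t u+\nu\Delta u-(u\cdot\nabla)u\in L^2(0,T;L^2(\Omega)^d)$, giving $\nabla p\in L^2(0,T;L^2)$ and hence $p\in L^2(0,T;H^1(\Omega))\cap L^2(0,T;L_0^2(\Omega))$ after fixing its mean. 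Uniqueness follows by testing the equation for the difference $w=u^{(1)}-u^{(2)}$ with $w$: the difference of the nonlinear terms is controlled using $\|u^{(i)}\|_{L^\infty(0,T;V)}\lesssim\nu$ and Ladyzhenskaya's inequality, and Gr\"onwall's inequality forces $w\equiv0$. The main obstacle throughout is the three-dimensional higher-order estimate: without \cref{eq:small_data_1} the cubic term $\|\nabla u\|^{3/2}\|Au\|^{3/2}$ cannot be absorbed, and the continuation argument is precisely the mechanism that converts smallness of the data into a \emph{global}-in-time $H^2$ bound.
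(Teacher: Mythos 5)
Your sketch is correct in its essentials, but it takes a genuinely different route from the paper: the paper does not prove \Cref{thm:strong_solution} at all, it simply imports it from \cite[Theorem 5.4]{Chrysafinos:2010} (as you note is an option), whereas you reconstruct the classical small-data strong-solution theory in the spirit of \cite{Temam:book}. Your key steps are the right ones and correctly placed: the Stokes-eigenfunction Galerkin scheme, the $H^1$-level estimate obtained by testing with $Au_m$, the observation that in $d=3$ the bound $|b(u_m,u_m,Au_m)|\lesssim\|\nabla u_m\|^{3/2}\|Au_m\|^{3/2}$ produces the non-absorbable term $\nu^{-3}\|\nabla u_m\|^6$, and the continuation argument that converts \cref{eq:small_data_1} into a global bound $\|\nabla u_m(t)\|^2\lesssim\nu^2$ --- this is exactly the mechanism behind the $\nu$-scalings in \cref{eq:energ_est_strong}, and your $\partial_t u_m$ estimate (testing with $\partial_t u_m$, Agmon, and the bounds just obtained) correctly yields the $\nu^3$ scaling with a $T$-dependent constant, consistent with the theorem's statement that $C$ depends on $T$. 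What the citation buys the paper is brevity and the delegation of two nontrivial ingredients that your sketch uses but does not justify: (i) $H^2$ elliptic regularity for the Stokes operator on a \emph{convex polyhedral} domain, i.e.\ $\|u\|_{H^2}\lesssim\|Au\|_{L^2}$, which also underlies the claim that the Stokes eigenfunctions lie in $H^2(\Omega)^d\cap V$ --- this is a genuine theorem (Kellogg--Osborn in 2D, Dauge and Maz'ya--Rossmann in 3D) and should be invoked explicitly rather than treated as folklore; and (ii) the de Rham/pressure recovery with the stated $L^2(0,T;H^1)$ regularity, which again rests on (i). What your route buys is transparency: it makes visible exactly where the smallness assumption enters (absorbing the cubic term on the $H^1$ level) and why uniqueness of the strong solution follows from the $L^\infty(0,T;V)$ bound via Ladyzhenskaya and Gr\"onwall. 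Minor blemishes only: in your closed differential inequality you keep both $\nu\|Au_m\|^2$ and $\nu\lambda_1\|\nabla u_m\|^2$ after having spent half of the dissipation on each, which is harmless but should be stated as a splitting; and the constant $K$ in the bootstrap must be fixed small (depending on $\lambda_1$ and the interpolation constant) \emph{before} choosing the data constant $C$ in \cref{eq:small_data_1}, a point worth making explicit.
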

	The assumption on the problem data \cref{eq:small_data_1} can be interpreted as 
	small initial data and body force, or large viscosity and arbitrary data. In light of \Cref{thm:strong_solution}, we will make the following assumption 
	on the problem data:

	\noindent
	\emph{Assumption 1.} 
	We assume that \cref{eq:small_data_1} holds. Note that, since $u_0 \in V 
	\subset H_0^1(\Omega)^d$, \cref{eq:small_data_1} implies the existence of a 
	constant $C>0$ such that
	\begin{equation}
		\label{eq:small_data_2}
		\norm{u_0}_{L^2(\Omega)}^2 + \frac{1}{\nu} 
		\norm{f}_{L^2(0,T;L^2(\Omega))}^2 \le C\nu^2.
	\end{equation}

	\begin{rem}
		If $\Omega \subset \mathbb{R}^2$ is a convex polygon, the existence of a global unique
		strong solution $(u,p)$ can be shown without any restriction on the problem data (see e.g. \cite{Temam:book}).
		However, we will later require a similar restriction on the data 
		to prove the uniqueness of the discrete solution in two dimensions. 
		We therefore assume \cref{eq:small_data_1} \emph{even in the two dimensional case}.
	\end{rem}
	Therefore, given $f \in L^2(0,T;H)$ and $u_0 \in V$ satisfying the small data 
	assumption \cref{eq:small_data_2}, we consider the following space-time 
	formulation for the strong
	solution to the Navier--Stokes system: for all $(v,q) \in L^2(0,T;H_0^1(\Omega)^d) \cap H^1(0,T;L^2(\Omega)^d) \times L^2(0,T;L_0^2(\Omega) \cap H^1(\Omega))$,
	find $(u,p) 
	\in L^{\infty}(0,T;V) \cap 
	L^2(0,T;H^2(\Omega)^d\cap V) \cap H^1(0,T;H) \times L^2(0,T;L_0^2(\Omega) \cap 
	H^1(\Omega))$ satisfying
	\begin{equation} \label{eq:continuous_st_problem}
		\begin{split}
				-&\int_0^T ( u, \partial_t v) \dif t + 
			\int_0^T((u \cdot \nabla)u , v) \dif t + \nu \int_0^T (\nabla u, 
			\nabla 
			v) \dif t + \int_0^T (\nabla p , v)\dif t\\ & +(u(T),v(T))- \int_0^T (q,\nabla \cdot u) \dif t= (u_0,v(0)) + \int_0^T ( 
			f, v)
			\dif t.
		\end{split}
	\end{equation}

	\subsection{The numerical method}
	
	To obtain a triangulation of the space-time domain $\Omega \times (0,T)$, we 
	first tessellate the spatial domain $\Omega \subset \mathbb{R}^d$, 
	$d=\cbr{2,3}$ with simplicial elements (if $d=2$), or 
	tetrahedral elements (if  $d=3$). We denote the resulting 
	tessellation by $\mathcal{T}_h = \cbr{K}$, and we assume that it is conforming 
	and quasi-uniform.
	Furthermore, we let $\mathcal{F}_h$ 
	and
	$\partial \mathcal{T}_h$ denote, respectively, the set and union of all edges of
	$\mathcal{T}_h$.  By $h_K$, we denote the 
	diameter of the element $K \in \mathcal{T}_h$, and we let $h = \max_{K \in 
		\mathcal{T}_h} h_{K}$.
	
	Next, we partition the time interval $[0,T]$ into a series of $N+1$ time-levels 
	$0 = 
	t_0 < t_1 < \dots < t_N = T$ of length $\Delta t_n = t_{n+1} - t_n$.
	For simplicity of presentation, we assume a uniform time step size $\Delta t_n 
	= \Delta t$ for $0 \le n \le N$. We remark, however, that a variable 
	time step size poses no additional difficulty in the application nor the 
	analysis of the method. A space-time slab is then defined as 
	$\mathcal{E}^n = 
	\Omega \times I_n$, with $I_n = (t_n,t_{n+1})$. We then tessellate the 
	space-time slab $\mathcal{E}^n$ with space-time prisms $K\times I_n$, i.e.
	$\mathcal{E}^n = \bigcup_{K \in 
		\mathcal{T}_h} K \times I_n$. We denote this tessellation by $\mathscr{T}_h^n$. 
	Combining each space-time slab 
	$n=0,\dots,N-1$, 
	we obtain a tessellation of the space-time domain $\mathscr{T}_h = \bigcup_{n = 
		0}^{N-1} \mathscr{T}_h^n$.

	\subsubsection{The space-time hybridized DG method}
	\label{sss:st-hdg}
	We discretize the Navier--Stokes problem \cref{eq:ns_equations} using the 
	exactly mass conserving hybridized discontinuous Galerkin method developed in 
	\cite{Rhebergen:2018a} combined with a high-order discontinuous Galerkin 
	time stepping scheme. We first introduce the following discontinuous finite 
	element spaces
	on $\mathcal{T}_h$:
	\begin{equation*}
		\begin{split}
			V_h &:= \cbr[1]{v_h \in L^2(\Omega)^d \; | \; v_h \in 
				P_{k_s}(K)^{d} \; \forall K \in \mathcal{T}_h}, \\
			Q_h &:= \cbr[1]{q_h \in L_0^2(\Omega) \; | \; q_h \in 
				P_{k_s-1}(K) \; \forall K \in \mathcal{T}_h}. 
		\end{split}
	\end{equation*}
	On $\partial \mathcal{T}_h$, we introduce the following facet finite element 
	spaces:
	\begin{equation*}
		\begin{split}
			\bar{V}_h &:= \cbr[1]{\bar v_h \in L^2(\partial \mathcal{T}_h) 
				\; | \; 
				\bar v_h\in 
				P_{k_s}(F)^{d} \; \forall F \in \mathcal{F}_h, \;\bar{v}_h|_{\partial \Omega} = 0}, \\
			\bar{Q}_h &:= \cbr[1]{\bar q_h \in L^2(\partial \mathcal{T}_h) \; | \; \bar 
				q_h \in 
				P_{k_s}(F) \; \forall F \in \mathcal{F}_h}.
		\end{split}
	\end{equation*}
	From these spaces, we construct the following space-time finite element spaces 
	in which 
	we will seek our approximation on each space-time slab $\mathcal{E}_h^n$: 
	\begin{equation*}
		\begin{split}
			\mathcal{V}_h &:= \cbr[1]{v_h \in L^2(0,T;L^2(\Omega)^d) \; | 
				\; v_h|_{(t_n,t_{n+1}]}
				\in P_{k_t}(t_n,t_{n+1};V_h)}, \\
			\mathcal{Q}_h &:= \cbr[1]{q_h \in L^2(0,T;L_0^2(\Omega)) \; | \; 
				q_h|_{(t_n,t_{n+1}]} \in P_{k_t}(t_n,t_{n+1};Q_h)}, \\
			\bar{\mathcal{V}}_h &:= \cbr[1]{\bar v_h \in 
				L^2(0,T;L^2(\partial \mathcal{T}_h)^d)
				\; | \; 
				\bar v_h|_{(t_n,t_{n+1}]} \in P_{k_t}(t_n,t_{n+1};\bar{V}_h)}, \\
			\bar{ \mathcal{Q}}_h &:= \cbr[1]{\bar q_h \in L^2(0,T;L^2(\partial 
				\mathcal{T}_h)) \; 
				| \; 
				\bar q_h|_{(t_n,t_{n+1}]} \in P_{k_t}(t_n,t_{n+1};\bar Q_h)}.
		\end{split}
	\end{equation*}
	We note that, in general, the polynomial degree in time $k_t$ can be chosen 
	independently of the polynomial degree in space $k_s$, but for ease of 
	presentation we choose 
	$k_t = k_s = k$. This choice forces us to consider $k_t \ge 1$, but the
	analysis herein is valid also for the case $k_t = 0$ (corresponding to a modified backward Euler
	scheme).
	We adopt the following notation for various product spaces of interest 
	in this work:
	\begin{equation*}
		\boldsymbol{V}_h = V_h \times \bar{V}_h, \quad
		\boldsymbol{Q}_h = Q_h \times \bar Q_h, \quad
		\boldsymbol{\mathcal{V}}_h = \mathcal{V}_h \times \bar{\mathcal{V}}_h,
		\quad  \boldsymbol{\mathcal{Q}}_h = \mathcal{Q}_h \times \bar{\mathcal{Q}}_h.
	\end{equation*}
	Pairs in these product spaces will be denoted using boldface; for example, 
	$\boldsymbol{v}_h := (v_h, \bar{v}_h) \in \boldsymbol{\mathcal{V}}_h$. 
	On each space-time slab $\mathcal{E}^n$, the space-time HDG method for the 
	Navier--Stokes problem reads: find 
	$(\boldsymbol{u}_h, \boldsymbol{p}_h) \in \boldsymbol{\mathcal{V}}_h \times 
	\boldsymbol{\mathcal{Q}}_h$ satisfying for all $(\boldsymbol{v}_h, 
	\boldsymbol{q}_h) \in \boldsymbol{\mathcal{V}}_h \times 
	\boldsymbol{\mathcal{Q}}_h$,
	\begin{subequations} \label{eq:discrete_problem}
		\begin{align}
			-\int_{I_n} &(u_h, \partial_t v_h)_{\mathcal{T}_h} \dif t +
			\label{eq:ns_sthdg_formulation_a}
			\int_{I_n} \del{ \nu a_h(\boldsymbol{u}_h, \boldsymbol{v}_h) + o_h(u_h; 
				\boldsymbol{u}_h, \boldsymbol{v}_h) } \dif t
			\\ \notag
			&	+(u_{n+1}^-,v_{n+1}^-)_{\mathcal{T}_h} +\int_{I_n} b_h(\boldsymbol{p}_h,v_h)  \dif 
			t = (u_{n}^-,v_{n}^+)_{\mathcal{T}_h} + \int_{I_n} (f, 
			v_h)_{\mathcal{T}_h} 
			\dif t, \\
			&\int_{I_n} b_h(\boldsymbol{q}_h,u_h) \dif t = 0,
		\end{align}
	\end{subequations}
	where
	$(u,v)_{\mathcal{T}_h} = \sum_{K \in \mathcal{T}_h}\int_K uv \dif
	x$. We initialize the numerical scheme by choosing $u_0^{-} = P_h u_0$
	on the first space-time slab $\mathcal{E}_h^0$, where
	$P_h : L^2(\Omega) \to V_h^{\text{div}}$ is the $L^2$-projection onto
	$V_h^{\text{div}} := \cbr{u_h \in V_h \; : \;
		b_h(\boldsymbol{q}_h,u_h) = 0,\ \forall \boldsymbol{q}_h \in
		\boldsymbol{Q}_h}$, the discretely divergence free subspace of
	$V_h$.  Here, we denote by $u^{\pm}_n$ the traces at time level $t_n$
	from above and below, i.e.
	$u_n^{\pm} = \lim \limits_{\epsilon \to 0} u_h(t^n \pm \epsilon)$. We
	define the time jump operator at time $t_n$ by
	$\sbr{u_h}_n = u_n^+ - u_n^-$.
	
	The discrete
	multilinear forms $a_h(\cdot,\cdot): \boldsymbol{V}_h \times \boldsymbol{V}_h 
	\to \mathbb{R}$, $b_h(\cdot,\cdot):  \boldsymbol{Q}_h \times V_h
	\to \mathbb{R}$, and $o_h(\cdot;\cdot,\cdot): V_h \times \boldsymbol{V}_h 
	\times \boldsymbol{V}_h \to \mathbb{R}$ appearing in \cref{eq:discrete_problem} 
	serve as approximations to the viscous, pressure-velocity coupling, and 
	convection terms appearing in \cref{eq:continuous_st_problem}, and are defined 
	as:
	\begin{subequations} \label{eq:forms}
		\begin{align}
			\label{eq:formA}
			a_h(\boldsymbol{u}, \boldsymbol{v}) :=&
			\sum_{K\in\mathcal{T}_h}\int_{K} \nabla u : \nabla v \dif x
			+ \sum_{K\in\mathcal{T}_h}\int_{\partial K}\frac{\alpha  }{h_K}(u - 
			\bar{u})\cdot(v - \bar{v}) \dif s,
			\\
			\nonumber
			&- \sum_{K\in\mathcal{T}_h}\int_{\partial 
				K}\sbr{(u-\bar{u})\cdot\partial_n v_h
				+ \partial_nu\cdot(v-\bar{v})} \dif s,
			\\
			\label{eq:formO}
			o_h(w; \boldsymbol{u}, \boldsymbol{v}) :=&
			-\sum_{K\in\mathcal{T}_h}\int_{K} u\otimes w : \nabla v \dif x
			+\sum_{K\in\mathcal{T}_h}\int_{\partial K}\tfrac{1}{2}w\cdot 
			n(u+\bar{u})\cdot(v-\bar{v}) \dif s
			\\
			\nonumber
			&+\sum_{K\in\mathcal{T}_h}\int_{\partial K}\tfrac{1}{2}\envert{w\cdot 
				n}(u-\bar{u})\cdot(v-\bar{v})\dif s,
			\\
			\label{eq:formB}
			b_h(\boldsymbol{p},v) :=&
			- \sum_{K\in\mathcal{T}_h}\int_{K} p \nabla \cdot v \dif x
			+ \sum_{K\in\mathcal{T}_h}\int_{\partial K}v \cdot n \bar{p} \dif s.
		\end{align}
	\end{subequations}
	Here, we slightly abuse notation by using $n$ to denote the outward unit normal $n_K$ to
	the element $K$ for brevity. To ensure stability of the numerical scheme, $\alpha > 0$ must be chosen 
	sufficiently large \cite{Rhebergen:2017}.

	\subsubsection{Preliminaries}
	In this section, we present some preliminaries and rapidly recall the
	main properties of the multilinear forms \cref{eq:forms} that have
	appeared previously in the literature. Throughout this section and the
	rest of the article, we denote by $C > 0$ a generic constant
	independent of the mesh parameters $h$ and $\Delta t$ and the
	viscosity $\nu$, but possibly dependent on the domain $\Omega$, the
	polynomial degree $k$, and the spatial dimension $d$. At times we also
	use the notation $ a \lesssim b$ to denote $a \le C b$. To set
	notation, let
	\begin{subequations}
		\begin{align*}
			V(h) &:= V_h + V \cap H^2(\Omega)^d, 
			\quad                                                                
			\bar{V}(h) := \bar{V}_h + H^{3/2}(\partial 
			\mathcal{T}_h)^d
		\end{align*}
	\end{subequations}
	and define the product space $\boldsymbol{V}(h) := V(h) \times \bar{V}(h)$.
	We introduce the following mesh-dependent inner-products and norms:
	\begin{subequations}
		\begin{align*}
			(\boldsymbol{u}, \boldsymbol{v})_{0,h} &:= (u, 
			v)_{\mathcal{T}_h} 
			+ 	\sum_{K \in \mathcal{T}_h} h_K ( u - \bar{u}, v - 
			\bar{v} )_{\partial K}, && \forall \boldsymbol{u}, 
			\boldsymbol{v} \in \boldsymbol{V}(h),
			\\
			\norm{v}_{1,p,h}^p &:= \sum_{K \in \mathcal{T}_h} \norm{\nabla 
				v}_{L^p(K)}^p 
			+ \sum_{ F \in \mathcal{F}_h} \frac{1}{h^{p-1}_F} 
			\norm{\jump{v}}_{L^p(F)}^p,
			&& \forall v \in V(h), \\
			\tnorm{\boldsymbol{v}}_v^2 &:= \sum_{K\in\mathcal{T}_h} \norm{\nabla 
				v}_{L^2(K)}^2 
			+ \sum_{K\in\mathcal{T}_h}h_K^{-1}\norm{\bar{v} - v}_{L^2(\partial K)}^2, &&\forall \boldsymbol{v}\in \boldsymbol{V}(h),
			\\
			\tnorm{\boldsymbol{v}}_{v'}^2 &:= \tnorm{\boldsymbol{v}}_v^2 
			+ \sum_{K\in\mathcal{T}_h}h_K\norm{(\nabla v)n}_{L^2(\partial K)}^2,
			&& \forall \boldsymbol{v} \in \boldsymbol{V}(h), \\
			\tnorm{\boldsymbol{q}}_p^2 &:= \norm{q_h}_{L^2(\Omega)}^2 + \sum_{K \in \mathcal{T}_h} h_K \norm{\bar{q}_h}_{L^2(\partial K)}^2, && \forall \boldsymbol{q}_h \in \boldsymbol{Q}_h,
		\end{align*}
	\end{subequations}
	where we note that the equivalence constants of $\tnorm{\cdot}_v$ and
	$\tnorm{\cdot}_{v'}$ on the finite-dimensional space
	$\boldsymbol{V}_h$ are independent of the mesh size;
	see~\cite{Rhebergen:2017}. The bilinear form
	$a_h(\cdot, \cdot)$ is continuous and for sufficiently large $\alpha$
	enjoys discrete coercivity \cite[Lemmas 4.2 and 4.3]{Rhebergen:2017},
	i.e. for all $\boldsymbol{v}_h \in\boldsymbol{V}_h$ and
	$\boldsymbol{u}, \boldsymbol{v} \in \boldsymbol{V}(h)$
	\begin{equation}
		\label{eq:ah_coer_bnd}
		a_h(\boldsymbol{v}_h, \boldsymbol{v}_h) \ge  
		C\tnorm{\boldsymbol{v}_h}_v^2 \quad 
		\text{and} \quad
		\envert{a_h(\boldsymbol{u}, \boldsymbol{v})} \le C \tnorm{\boldsymbol{u}}_{v'}\tnorm{\boldsymbol{v}}_{v'}.
	\end{equation}
	%
	%
	%
	The trilinear form $o_h(\cdot;\cdot,\cdot)$ satisfies \cite[Proposition 3.6]{Cesmelioglu:2017}
	\begin{equation}
		\label{eq:ohequals}
		o_h(w_h; \boldsymbol{v}_h, \boldsymbol{v}_h) = 
		\frac{1}{2}\sum_{K\in\mathcal{T}}\int_{\partial K}\envert{w_h\cdot 
			n}\envert{v_h - \bar{v}_h}^2 \dif s \ge 0 \qquad w_h \in V_h^{\text{div}}, \;
		\forall \boldsymbol{v}_h \in\boldsymbol{V}_h.
	\end{equation}
	Further, the trilinear form $o_h(\cdot;\cdot,\cdot)$ is Lipschitz continuous in 
	its 
	first argument~\cite[Proposition 3.4]{Cesmelioglu:2017}: for all
	$w_1, w_2 \in V(h)$, $\boldsymbol{u} \in 
	\boldsymbol{V}(h)$ and
	$\boldsymbol{v} \in \boldsymbol{V}(h)$ it holds that
	\begin{equation}
		\label{eq:ohLip}
		\envert{o_h(w_{1}; \boldsymbol{u}, \boldsymbol{v}) - o_h(w_{2}; 
			\boldsymbol{u}, \boldsymbol{v})} \le 
		C\norm{w_{1}-w_{2}}_{1,h}\tnorm{ \boldsymbol{u} }_{v}\tnorm{ 
			\boldsymbol{v} }_{v}.
	\end{equation}
	
	\subsection{Well-posedness and stability}
	
	To the best of the authors' knowledge, a rigourous study of well-posedness for 
	higher-order space-time Galerkin schemes applied to the Navier--Stokes 
	equations 
	has yet to appear in the literature. We remark that for the lowest order scheme
	($k = 1)$, uniqueness of the discrete solution is a consequence of the following
	energy estimate which we will derive in \Cref{sec:well_posedness}:
	\begin{lem} \label{lem:energy_stability}
		Let $d=2 \text{ or } 3$, $k \ge 1$, and suppose that $\boldsymbol{u}_h \in 
		\boldsymbol{\mathcal{V}}_h$ is an approximate 
		velocity 
		solution 
		of the Navier--Stokes equations computed using the space-time HDG scheme 
		\cref{eq:discrete_problem} for $n=0,\dots,N-1$. There exists a 
		$C>0$ such that
		\begin{equation*}
			\norm[1]{u_{N}^-}_{L^2(\Omega)}^2 
			+ \sum_{n=0}^{N-1} \norm[1]{ 
				\sbr{u_h}_n}_{L^2(\Omega)}^2 +
			\nu \int_{0}^{T}\tnorm{\boldsymbol{u}_h}_v^2 \dif t  \le C\del{
				\frac{1}{\nu}\int_{0}^{T}\norm{f}_{L^2(\Omega)}^2 \dif t \; + \;
				\norm[1]{u_{0}}_{L^2(\Omega)}^2} .
		\end{equation*}
	\end{lem}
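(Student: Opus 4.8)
�The plan is to derive the energy estimate by the standard energy method: test the discrete momentum equation \cref{eq:ns_sthdg_formulation_a} with $\boldsymbol{v}_h = \boldsymbol{u}_h$ and the continuity equation with $\boldsymbol{q}_h = \boldsymbol{p}_h$ on each space-time slab $\mathcal{E}^n$, then sum over $n = 0, \dots, N-1$ and telescope. Choosing $\boldsymbol{q}_h = \boldsymbol{p}_h$ kills the pressure-velocity coupling term $\int_{I_n} b_h(\boldsymbol{p}_h, u_h)\dif t = 0$, which is exactly the source of pressure robustness. The viscous term contributes $\nu \int_{I_n} a_h(\boldsymbol{u}_h, \boldsymbol{u}_h)\dif t \ge C\nu \int_{I_n}\tnorm{\boldsymbol{u}_h}_v^2 \dif t$ by the discrete coercivity bound \cref{eq:ah_coer_bnd}, and the convective term contributes $\int_{I_n} o_h(u_h; \boldsymbol{u}_h, \boldsymbol{u}_h)\dif t \ge 0$ by \cref{eq:ohequals} — here one must first check that $u_h \in V_h^{\mathrm{div}}$ pointwise in time, which follows from the discrete continuity equation together with the initialization $u_0^- = P_h u_0 \in V_h^{\mathrm{div}}$ (one tests continuity with an arbitrary $\boldsymbol{q}_h$ constant in time on $I_n$ to conclude $u_h(t) \in V_h^{\mathrm{div}}$ for a.e.\ $t$).

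The main work is handling the time-derivative term $-\int_{I_n}(u_h, \partial_t u_h)_{\mathcal{T}_h}\dif t$ together with the jump/upwind terms $(u_{n+1}^-, u_{n+1}^-)_{\mathcal{T}_h} - (u_n^-, u_n^+)_{\mathcal{T}_h}$. Integration by parts in time on $I_n$ gives $-\int_{I_n}(u_h, \partial_t u_h)_{\mathcal{T}_h}\dif t = -\tfrac12\norm{u_{n+1}^-}_{L^2(\Omega)}^2 + \tfrac12\norm{u_n^+}_{L^2(\Omega)}^2$, so the full temporal contribution becomes $\tfrac12\norm{u_{n+1}^-}^2 + \tfrac12\norm{u_n^+}^2 - (u_n^-, u_n^+)$. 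Completing the square, this equals $\tfrac12\norm{u_{n+1}^-}^2 - \tfrac12\norm{u_n^-}^2 + \tfrac12\norm{u_n^+ - u_n^-}^2 = \tfrac12\norm{u_{n+1}^-}^2 - \tfrac12\norm{u_n^-}^2 + \tfrac12\norm{\sbr{u_h}_n}^2$. Summing over $n = 0, \dots, N-1$ telescopes the first two terms to $\tfrac12\norm{u_N^-}^2 - \tfrac12\norm{u_0^-}^2$, and since $\norm{u_0^-}_{L^2(\Omega)} = \norm{P_h u_0}_{L^2(\Omega)} \le \norm{u_0}_{L^2(\Omega)}$ by stability of the $L^2$-projection, this yields the left-hand side of the claimed inequality up to constants.

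It remains to absorb the right-hand side forcing term $\int_{I_n}(f, u_h)_{\mathcal{T}_h}\dif t$. I would bound it via Cauchy--Schwarz in space and time, then a discrete Poincar\'e inequality of the form $\norm{u_h}_{L^2(\Omega)} \le C\tnorm{\boldsymbol{u}_h}_v$ valid on $\boldsymbol{V}_h$ (standard for this HDG space; see \cite{Rhebergen:2017}), followed by Young's inequality with a weight tuned to the viscosity: $\int_0^T (f, u_h)\dif t \le \tfrac{1}{2\nu C}\int_0^T\norm{f}_{L^2(\Omega)}^2\dif t + \tfrac{\nu C}{2}\int_0^T\tnorm{\boldsymbol{u}_h}_v^2\dif t$, so that the second term is absorbed into the viscous contribution on the left. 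Collecting everything gives the stated estimate. The only subtle point — and the step I expect to require the most care — is justifying that $u_h(t) \in V_h^{\mathrm{div}}$ for a.e.\ $t \in I_n$ so that \cref{eq:ohequals} applies; this hinges on the fact that $b_h(\boldsymbol{q}_h, \cdot)$ is independent of time and the discrete continuity equation holds against all time-polynomial test functions $\boldsymbol{q}_h \in \boldsymbol{\mathcal{Q}}_h$, which lets one test against $\boldsymbol{q}_h$ that are arbitrary in space and supported on a single Gauss point in time (or argue by the density of such in $P_{k_t}$).
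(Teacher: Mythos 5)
Your proof is correct and follows essentially the same route as the paper: test the scheme with $(\boldsymbol{u}_h,\boldsymbol{p}_h)$, use coercivity of $a_h$ and positivity of $o_h$ (which indeed requires $u_h(t)\in V_h^{\mathrm{div}}$ pointwise in time, i.e.\ \Cref{lem:kernel_equiv}), integrate by parts in time and telescope the jump terms, and absorb the forcing via Cauchy--Schwarz, the discrete Poincar\'e inequality \cref{eq:disc_poinc}, and Young's inequality weighted by $\nu$. One small caution: your parenthetical suggestion of testing the continuity equation with $\boldsymbol{q}_h$ constant in time only controls the time average of $u_h$ and does not by itself give the pointwise-in-time statement; the correct justification is the equal-degree-in-time test space argument (your Gauss-point/Lagrange-basis variant in the final paragraph, or an orthonormal expansion in $P_k(I_n)$), which is exactly the content of \Cref{lem:kernel_equiv}.
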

	
	For higher order schemes in time $(k \ge 2)$, this energy bound is
	insufficient to prove the uniqueness of the discrete solution as
	$\text{ess\;sup}_{0 < t \le T} \; u_h(t)$ need not be attained at the
	partition points of the time-interval. Consequently,
	\Cref{lem:energy_stability} offers no uniform control over the
	discrete velocity solution $u_h$ in $L^{\infty}(0,T;L^2(\Omega)^d)$.
	We will show that such a bound is possible if $d=2$ using the tools
	introduced by Chrysafinos and Walkington
	\cite{Chrysafinos:2006,Chrysafinos:2008,Chrysafinos:2010}.
	
	We begin by introducing the exponential interpolant from
	\cite{Chrysafinos:2010}. Let $(V, (\cdot, \cdot)_V)$ be an
	inner-product space and let $\lambda > 0$ be given. The exponential
	interpolant $\tilde{v}$ of $v \in P_k(I_n;V)$ is defined by
	\begin{equation}
		\label{eq:exp_interp_property}
		\int_{I_n} \del{\tilde{v}, q}_V \dif t = \int_{I_n} \del{v, q}_V e^{-\lambda(t-t_n)} \dif t, \quad \forall q \in P_{k-1}(I_n, V). 
	\end{equation}
	such that $\tilde{v}(t_n^+) = v(t_n^+)$. By \cite[Lemma 3.4 and Lemma
	3.6]{Chrysafinos:2010} there exists a constant $C>0$ such that for all
	$v \in P_k(I_n;V)$ and $1 \le p \le \infty$,
	\begin{equation} \label{eq:lp_bnd_exp_interp}
		\norm{\tilde{v}}_{L^p(I_n;V)} \le C  \norm{v}_{L^p(I_n;V)}.
	\end{equation}
	
	Next, the discrete characteristic function of $v \in P_k(I_n;V)$ for
	fixed $s \in (t_n,t_{n+1})$ is defined as the function
	$v_{\chi} \in P_k(I_n;V)$ satisfying $v_{\chi}(t_n^+) = v(t_n^+)$ and
	\begin{equation}
		\label{eq:disc_char_function}
		\int_{t_n}^{t_{n+1}}(v_{\chi}, w)_V \dif t= \int_{t_n}^s (v, w)_V \dif t, \quad 
		\forall 
		w \in P_{k-1}(I_n;V),
	\end{equation}
	(see also \cite{Dolejsi:book}). By \cite[Lemmas 3.1 and
	3.2]{Chrysafinos:2010} the discrete characteristic function satisfies
	\begin{equation} \label{eq:bnd_char}
		\norm[0]{v_{\chi}}_{L^2(I_n,V)} \le C \norm[0]{v}_{L^2(I_n,V)}.
	\end{equation}
	Moreover, if $v(t) = z$ is constant in time,  
	its 
	discrete characteristic function can be characterized by $v_\chi = p(t)z$ for $p \in P_k(I_n)$ 
	satisfying 
	$p(t_n^+) = 1$ and
	\begin{subequations}
		\begin{align}
			\label{eq:const_int_char}
			\int_{t_n}^{t_{n+1}} p q \dif t &= \int_{t_n}^s q \dif t, \quad \forall 
			q \in P_{k-1}(I_n), \\
			\label{eq:const_linf_bnd_char}
			\norm[0]{p}_{L^{\infty}(I_n)} &\le C.
		\end{align}
	\end{subequations}
	With the help of these tools, it is possible to bound the discrete solution 
	$u_h$ in 
	$L^{\infty}(0,T;L^2(\Omega)^d)$ in two spatial dimensions:
	\begin{lem} \label{lem:linfty_bnd}
		Let $d=2$, $k \ge 1$, and suppose $\boldsymbol{u}_h \in 
		\boldsymbol{\mathcal{V}}_h$ is an 
		approximate 
		velocity 
		solution 
		of the Navier--Stokes equations computed using the space-time HDG scheme 
		\cref{eq:discrete_problem} for $n=0,\dots,N-1$. There exists a 
		constant $C>0$ such that
		\begin{equation*} 
			\begin{split}
				&
				\norm{u_h}_{L^{\infty}(0,T;L^2(\Omega))}^2
				\\
				&\le C\del[3]{ \frac{1}{\nu}\int_{0}^{T}\norm{f}_{L^2(\Omega)}^2 
					\dif 
					t \; + \;
					\norm[1]{u_{0}}_{L^2(\Omega)}^2} +
				\frac{C}{\nu^2}\del[3]{ 
					\frac{1}{\nu}\int_{0}^{T}\norm{f}_{L^2(\Omega)}^2 
					\dif t \; + \;
					\norm[1]{u_{0}}_{L^2(\Omega)}^2}^2.
			\end{split}
		\end{equation*}
	\end{lem}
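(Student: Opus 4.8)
The plan is to upgrade the nodal energy estimate of \Cref{lem:energy_stability}, which only controls $\boldsymbol u_h$ at the temporal partition points, to a full $L^\infty(0,T;L^2(\Omega))$ bound. The vehicle is the discrete characteristic function of Chrysafinos and Walkington: given an arbitrary $s\in(0,T)$, we test the scheme on the slab containing $s$ against the discrete characteristic function $\boldsymbol u_{h,\chi}$ of $\boldsymbol u_h$ associated with $s$, which extracts $\tfrac12\norm{u_h(s)}_{L^2(\Omega)}^2$ from the temporal term. The hypothesis $d=2$ is used at exactly one point, namely a discrete Ladyzhenskaya inequality for the convection form; the quadratic term $\nu^{-2}(\cdot)^2$ in the estimate is then the price of closing the resulting inequality without a smallness assumption on the data.

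Write $D:=\nu^{-1}\norm{f}_{L^2(0,T;L^2(\Omega))}^2+\norm{u_0}_{L^2(\Omega)}^2$, so that \Cref{lem:energy_stability}, applied on $[0,t_m]$ and on $[0,t_{m+1}]$, gives $\norm{u_m^-}_{L^2(\Omega)}^2\le CD$ and $\nu\int_{I_m}\tnorm{\boldsymbol u_h}_v^2\dif t\le CD$ for every $m$. Fix $m$ and $s\in I_m$, and let $\boldsymbol u_{h,\chi}=(u_{h,\chi},\bar u_{h,\chi})\in P_k(I_m;\boldsymbol V_h)$ be the discrete characteristic function of $\boldsymbol u_h|_{I_m}$ at $s$, cf.~\cref{eq:disc_char_function}. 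Taking $\boldsymbol v_h=\boldsymbol u_{h,\chi}$ in \cref{eq:ns_sthdg_formulation_a} on $\mathcal{E}^m$, integrating the time-derivative term by parts and invoking \cref{eq:disc_char_function} with $w=\partial_t u_h\in P_{k-1}(I_m;V_h)$, the temporal terms collapse exactly to $\tfrac12\norm{u_h(s)}_{L^2(\Omega)}^2+\tfrac12\norm{\sbr{u_h}_m}_{L^2(\Omega)}^2-\tfrac12\norm{u_m^-}_{L^2(\Omega)}^2$. Moreover, testing the discrete mass equation with arbitrary time polynomials of degree $\le k$ shows $u_h(t)\in V_h^{\text{div}}$ for every $t\in I_m$, and since the characteristic-function map acts only on the temporal coefficients, $u_{h,\chi}(t)\in V_h^{\text{div}}$ as well; hence $\int_{I_m}b_h(\boldsymbol p_h,u_{h,\chi})\dif t=0$ and the scheme on $\mathcal{E}^m$ reduces to
\begin{equation*}
\tfrac12\norm{u_h(s)}_{L^2(\Omega)}^2+\tfrac12\norm{\sbr{u_h}_m}_{L^2(\Omega)}^2=\tfrac12\norm{u_m^-}_{L^2(\Omega)}^2-\nu\!\int_{I_m}\!a_h(\boldsymbol u_h,\boldsymbol u_{h,\chi})\dif t-\int_{I_m}\!o_h(u_h;\boldsymbol u_h,\boldsymbol u_{h,\chi})\dif t+\int_{I_m}\!(f,u_{h,\chi})_{\mathcal{T}_h}\dif t.
\end{equation*}

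It remains to bound the right-hand side by $CD+C\nu^{-1}MD$ with $M:=\norm{u_h}_{L^\infty(I_m;L^2(\Omega))}$. The first term is $\le CD$. For the viscous term, the continuity bound \cref{eq:ah_coer_bnd}, the mesh-independent equivalence of $\tnorm{\cdot}_{v'}$ and $\tnorm{\cdot}_v$ on $\boldsymbol V_h$, Cauchy--Schwarz in time, and the $L^2(I_m)$-stability \cref{eq:bnd_char} of $\boldsymbol u_{h,\chi}$ give $\nu\int_{I_m}\envert{a_h(\boldsymbol u_h,\boldsymbol u_{h,\chi})}\dif t\le C\nu\int_{I_m}\tnorm{\boldsymbol u_h}_v^2\dif t\le CD$; similarly a discrete Poincar\'e inequality, Young's inequality and \cref{eq:bnd_char} give $\int_{I_m}(f,u_{h,\chi})_{\mathcal{T}_h}\dif t\le C\nu^{-1}\norm{f}_{L^2(I_m;L^2(\Omega))}^2+C\nu\int_{I_m}\tnorm{\boldsymbol u_h}_v^2\dif t\le CD$. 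The convection term is the crux and is where $d=2$ enters: a discrete Ladyzhenskaya inequality (valid for $d=2$: $\norm{w_h}_{L^4(\Omega)}^2\lesssim\norm{w_h}_{L^2(\Omega)}\norm{w_h}_{1,2,h}$ for piecewise polynomials), combined with discrete trace and inverse inequalities applied to the three contributions in \cref{eq:formO}, yields $\envert{o_h(u_h;\boldsymbol u_h,\boldsymbol v)}\le C\norm{u_h}_{L^2(\Omega)}\tnorm{\boldsymbol u_h}_v\tnorm{\boldsymbol v}_v$ for all $\boldsymbol v\in\boldsymbol V_h$; then Cauchy--Schwarz in time and \cref{eq:bnd_char} once more give $\int_{I_m}\envert{o_h(u_h;\boldsymbol u_h,\boldsymbol u_{h,\chi})}\dif t\le CM\int_{I_m}\tnorm{\boldsymbol u_h}_v^2\dif t\le C\nu^{-1}MD$.

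Collecting these bounds yields $\tfrac12\norm{u_h(s)}_{L^2(\Omega)}^2\le CD+C\nu^{-1}MD$ for every $s\in I_m$, with a constant independent of $m$ and $s$. Taking the supremum over $s\in I_m$ gives the quadratic inequality $M^2\le CD+C\nu^{-1}MD$, hence $M^2\le C(D+\nu^{-2}D^2)$ by Young's inequality, and maximizing over $m$ gives the asserted bound on $\norm{u_h}_{L^\infty(0,T;L^2(\Omega))}^2$. The main obstacle is the convection term: one must both establish the two-dimensional discrete Ladyzhenskaya estimate for $o_h$ and arrange matters so that the unknown $M$ enters the right-hand side only linearly, which is what allows the inequality to close for arbitrary data at the expense of the quadratic term $\nu^{-2}D^2$. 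A secondary technical point, used tacitly above, is that the single function $\boldsymbol u_{h,\chi}$ must serve in the temporal, viscous and forcing terms at once; this is legitimate because $\boldsymbol v\mapsto\boldsymbol v_\chi$ is defined purely on the temporal coefficients and is therefore stable simultaneously in $L^2(I_m;L^2(\Omega))$ and in $L^2(I_m;\tnorm{\cdot}_v)$.
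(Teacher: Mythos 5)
Your proposal is correct, but it takes a genuinely different route from the paper's own proof. The paper proves this lemma with the \emph{exponential interpolant} \cref{eq:exp_interp_property}: testing with $\tilde{\boldsymbol{u}}_h$, choosing $\lambda=1/\Delta t$, and invoking the temporal scaling \cref{eq:time_scaling} with $p=\infty$ so that the weighted term $\lambda\int_{I_n}\norm{u_h}_{L^2(\Omega)}^2e^{-\lambda(t-t_n)}\dif t$ dominates $\norm{u_h}_{L^\infty(I_n;L^2(\Omega))}^2$; the convection term is bounded by \Cref{lem:o_space_bnd} and split by Young's inequality into $\tfrac{\epsilon_1}{2}\norm{u_h}_{L^\infty(I_n;L^2(\Omega))}^2$, absorbed into the left, plus $C\epsilon_1^{-1}\del[1]{\int_{I_n}\tnorm{\boldsymbol{u}_h}_v^2\dif t}^2$, which the energy estimate turns into the $\nu^{-2}$-quadratic data term. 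You instead use the \emph{discrete characteristic function} at an arbitrary $s\in I_m$, which extracts $\tfrac12\norm{u_h(s)}_{L^2(\Omega)}^2$ exactly (your bookkeeping of the temporal terms, the identity $(u_{h,\chi})(t_m^+)=u_m^+$, and the vanishing of the pressure term via \Cref{lem:kernel_equiv} are all sound), and you close a quadratic inequality $M^2\le CD+C\nu^{-1}MD$ in the unknown supremum $M$ by Young, arriving at the same $C(D+\nu^{-2}D^2)$ bound. Your convection estimate is precisely \cref{eq:o_space_bnd_1} specialized to $w_h=u_h$, so nothing new is needed there, and your tacit point about the characteristic map being purely temporal—hence simultaneously stable in $L^2(I_m;L^2(\Omega))$ and in the $\tnorm{\cdot}_v$-norm—is exactly the property the paper itself exploits in step two of the proof of \Cref{thm:uniqueness_HO}; in effect you have transplanted that argument from the uniqueness proof to the $L^\infty$ bound. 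What the paper's route buys is avoiding the quadratic inequality in $M$ (the $L^\infty$ term appears on the left with a fixed constant via $\lambda=1/\Delta t$) and reusing the interpolant that also drives \Cref{lem:stokes_proj_bnd}; what your route buys is a more direct extraction of the pointwise value, no exponential weight or temporal inverse estimate \cref{eq:time_scaling}, and only one use of Young's inequality at the end. Two cosmetic points: the supremum over $s$ in the open interval $I_m$ equals $\norm{u_h}_{L^\infty(I_m;L^2(\Omega))}$ by continuity of the polynomial $u_h$ on the closed slab, which you should say explicitly, and your use of \Cref{lem:energy_stability} on $[0,t_m]$ is really the partial-sum form of its telescoping proof—harmless, and the paper does the same.
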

	\noindent With this bound in hand, we 
	can prove the following uniqueness result in two dimensions for the 
	solution of the nonlinear 
	system of algebraic equations arising from the discrete scheme 
	\cref{eq:discrete_problem}:
	\begin{thm}[Uniqueness in two dimensions] 
		\label{thm:uniqueness_HO}
		\label{thm:unique2d}
		Let $\boldsymbol{u}_h \in \boldsymbol{\mathcal{V}}_h$ be an
		approximate velocity solution of the Navier--Stokes equations
		computed using the space-time HDG scheme
		\cref{eq:discrete_problem} for $n=0,\dots,N-1$. In two
		dimensions, if the problem data satisfies
		\cref{eq:small_data_2}
		then $\boldsymbol{u}_h$ is the unique velocity solution to 
		\cref{eq:discrete_problem}.
	\end{thm}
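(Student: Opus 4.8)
The plan is to bound the difference of two putative discrete velocities directly. Let $\boldsymbol{u}_h^1,\boldsymbol{u}_h^2\in\boldsymbol{\mathcal{V}}_h$ both solve \cref{eq:discrete_problem} (with associated pressures $\boldsymbol{p}_h^1,\boldsymbol{p}_h^2$) and set $\boldsymbol{e}_h:=\boldsymbol{u}_h^1-\boldsymbol{u}_h^2$. Subtracting the two instances of \cref{eq:discrete_problem} yields an error equation of the same form but with zero right-hand side, and since both runs are initialized with $u_0^-=P_hu_0$ we have $e_0^-=0$. I would first record two structural facts. (i) For each $i$, $t\mapsto b_h(\boldsymbol{q}_h,u_h^i(t))$ is a polynomial of degree $\le k$ on $I_n$ whose integral against every element of $P_k(I_n)$ vanishes, hence vanishes identically; therefore $u_h^1(t),u_h^2(t),e_h(t)\in V_h^{\text{div}}$ for every $t$, and any velocity test function taking values in $V_h^{\text{div}}$ annihilates the pressure difference $\int_{I_n}b_h(\boldsymbol{p}_h^1-\boldsymbol{p}_h^2,\cdot)\dif t$. (ii) Since $o_h$ is linear in its second argument,
\[
o_h(u_h^1;\boldsymbol{u}_h^1,\boldsymbol{v}_h)-o_h(u_h^2;\boldsymbol{u}_h^2,\boldsymbol{v}_h)=o_h(u_h^1;\boldsymbol{e}_h,\boldsymbol{v}_h)+\bigl(o_h(u_h^1;\boldsymbol{u}_h^2,\boldsymbol{v}_h)-o_h(u_h^2;\boldsymbol{u}_h^2,\boldsymbol{v}_h)\bigr).
\]

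Next I would test the error equation twice. Testing with $\boldsymbol{v}_h=\boldsymbol{e}_h$ and summing over $n=0,\dots,N-1$, the discontinuous-in-time terms telescope and, using $e_0^-=0$, contribute $\tfrac12\norm{e_N^-}_{L^2(\Omega)}^2+\tfrac12\sum_n\norm{\jump{e_h}_n}_{L^2(\Omega)}^2$; coercivity of $a_h$ \cref{eq:ah_coer_bnd} contributes $C\nu\int_0^T\tnorm{\boldsymbol{e}_h}_v^2\dif t$; the pressure difference drops by (i); and $o_h(u_h^1;\boldsymbol{e}_h,\boldsymbol{e}_h)\ge 0$ by \cref{eq:ohequals} since $u_h^1(t)\in V_h^{\text{div}}$. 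Moving the remaining convection term to the right,
\[
\norm{e_N^-}_{L^2(\Omega)}^2+\sum_n\norm{\jump{e_h}_n}_{L^2(\Omega)}^2+\nu\int_0^T\tnorm{\boldsymbol{e}_h}_v^2\dif t\lesssim\Bigl|\int_0^T\bigl(o_h(u_h^1;\boldsymbol{u}_h^2,\boldsymbol{e}_h)-o_h(u_h^2;\boldsymbol{u}_h^2,\boldsymbol{e}_h)\bigr)\dif t\Bigr|.
\]
Independently, I would repeat the argument behind \Cref{lem:linfty_bnd} for the error equation, testing with the discrete characteristic function $(\boldsymbol{e}_h)_\chi$ of $\boldsymbol{e}_h$ (taken in the $L^2(\Omega)$ inner product on $V_h^{\text{div}}$, so that $(\boldsymbol{e}_h)_\chi$ is again $V_h^{\text{div}}$-valued and the pressure difference drops); using \cref{eq:bnd_char,eq:lp_bnd_exp_interp,eq:const_linf_bnd_char}, the equivalence of $\tnorm{\cdot}_v$ and $\tnorm{\cdot}_{v'}$ on $\boldsymbol{V}_h$, and the continuity of $o_h$ (to absorb the extra cross term $o_h(u_h^1;\boldsymbol{e}_h,(\boldsymbol{e}_h)_\chi)$), one obtains $\norm{e_h}_{L^\infty(0,T;L^2(\Omega))}^2\lesssim\nu\int_0^T\tnorm{\boldsymbol{e}_h}_v^2\dif t+(\text{convection contributions})$, with the data terms of \Cref{lem:linfty_bnd} now absent.

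Finally I would close the estimate in two dimensions. Adding a multiple of the first estimate to the second, it remains only to bound the convection terms by something with a small constant. I would use the Lipschitz bound \cref{eq:ohLip} to control $o_h(u_h^1;\boldsymbol{u}_h^2,\cdot)-o_h(u_h^2;\boldsymbol{u}_h^2,\cdot)$ by $C\norm{e_h}_{1,h}\tnorm{\boldsymbol{u}_h^2}_v\tnorm{\cdot}_v$, then apply the discrete Ladyzhenskaya / Gagliardo--Nirenberg inequality, valid for $d=2$, in the form $\norm{v_h}_{L^4(\Omega)}^2\lesssim\norm{v_h}_{L^2(\Omega)}\norm{v_h}_{1,h}$, to trade half a power of $\tnorm{\boldsymbol{e}_h}_v$ for $\norm{e_h}_{L^2(\Omega)}$ and half a power of $\tnorm{\boldsymbol{u}_h^2}_v$ for $\norm{u_h^2}_{L^2(\Omega)}$. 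Young's inequality then yields a bound of the form
\[
\tfrac{\nu}{2}C\int_0^T\tnorm{\boldsymbol{e}_h}_v^2\dif t+C\nu^{-3}\norm{e_h}_{L^\infty(0,T;L^2(\Omega))}^2\,\norm{u_h^2}_{L^\infty(0,T;L^2(\Omega))}^2\int_0^T\tnorm{\boldsymbol{u}_h^2}_v^2\dif t .
\]
By \Cref{lem:energy_stability} and \cref{eq:small_data_2}, $\int_0^T\tnorm{\boldsymbol{u}_h^2}_v^2\dif t\lesssim\nu$, and by \Cref{lem:linfty_bnd} and \cref{eq:small_data_2}, $\norm{u_h^2}_{L^\infty(0,T;L^2(\Omega))}^2\lesssim\nu^2$, so the second term is bounded by $C_\star\norm{e_h}_{L^\infty(0,T;L^2(\Omega))}^2$ with $C_\star$ controlled by the constant in \cref{eq:small_data_2}. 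Taking that constant small enough that $C_\star<1$ and absorbing the viscous contributions into the left-hand side forces $\norm{e_h}_{L^\infty(0,T;L^2(\Omega))}=0$ and $\int_0^T\tnorm{\boldsymbol{e}_h}_v^2\dif t=0$, hence $\boldsymbol{e}_h=0$, i.e.\ $\boldsymbol{u}_h^1=\boldsymbol{u}_h^2$.

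I expect the convection estimate to be the main obstacle. The Lipschitz property \cref{eq:ohLip} by itself only delivers a bound of the shape $\norm{e_h}_{1,h}\tnorm{\boldsymbol{u}_h^2}_v\tnorm{\boldsymbol{e}_h}_v$, in which $\tnorm{\boldsymbol{u}_h^2}_v$ is controlled merely in $L^2$ in time and is not small on its own; it is precisely the two-dimensional interpolation inequality that recasts this as the product $\norm{u_h^2}_{L^\infty(0,T;L^2(\Omega))}^2\int_0^T\tnorm{\boldsymbol{u}_h^2}_v^2\dif t$, whose powers of $\nu$ exactly cancel the $\nu^{-3}$ produced by Young's inequality, leaving a constant that is small under \cref{eq:small_data_2}. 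In three dimensions the analogous interpolation inequality carries the wrong exponents and the argument breaks down, consistent with the restriction to $d=2$. A secondary technical point, largely inherited from the proof of \Cref{lem:linfty_bnd}, is to verify that the exponential interpolant of \cref{eq:exp_interp_property} and the discrete characteristic function of \cref{eq:disc_char_function} apply to the error --- in particular that $(\boldsymbol{e}_h)_\chi$ stays $V_h^{\text{div}}$-valued so the pressure difference is annihilated.
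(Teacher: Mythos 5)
Your overall strategy coincides with the paper's own proof: test the difference equation once with the difference itself and once with its discrete characteristic function, sharpen the convection estimates via the two-dimensional discrete Ladyzhenskaya inequality, invoke the a priori bounds of \Cref{lem:energy_stability} and \Cref{lem:linfty_bnd} for the discrete solutions, and close by absorption under \cref{eq:small_data_2}; the $\nu$-bookkeeping you sketch ($\epsilon = O(\nu)$ in Young's inequality, the $\nu^{-3}$ cancelling against $\nu^2\cdot\nu$) is exactly what happens in the paper's Steps one--three.

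Two steps, however, would not go through as literally written. First, you cannot obtain the needed product structure by applying the Lipschitz bound \cref{eq:ohLip} and then ``trading half powers'' via \cref{eq:disc_lady_2d}: once the convection difference has been estimated by $\norm{e_h}_{1,h}\tnorm{\boldsymbol{u}_h^2}_v\tnorm{\boldsymbol{e}_h}_v$, the interpolation inequality cannot be used to weaken the factor $\tnorm{\boldsymbol{u}_h^2}_v$ (it bounds $L^4$ norms from above by $L^2$ and energy norms, not the reverse). The interpolation must be performed inside the trilinear form, term by term, with the generalized H\"older inequality ($L^4$--$L^4$--$L^2$), discrete trace inequalities and facet scalings; this is precisely the content of \Cref{lem:o_space_bnd}, estimate \cref{eq:o_space_bnd_1}, which the paper uses in place of \cref{eq:ohLip}. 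Second, the cross term $o_h(u_h^1;\boldsymbol{e}_h,(\boldsymbol{e}_h)_\chi)$ is not absorbed by mere continuity of $o_h$: continuity produces a factor $\norm{u_h^1}_{1,h}$, which is controlled only in $L^2$ in time, so the resulting time integral cannot be hidden in $\nu\int_0^T\tnorm{\boldsymbol{e}_h}_v^2\dif t$. The paper again applies \cref{eq:o_space_bnd_1} (together with \cref{eq:bnd_char}), which yields a contribution of the form $\norm{u_h^1}_{L^2(\Omega)}\tnorm{\boldsymbol{e}_h}_v^2$ plus higher-order terms; absorption then relies on $\norm{u_h^1}_{L^\infty(0,T;L^2(\Omega))}\lesssim\nu$ from \Cref{lem:linfty_bnd} and on the smallness of the data constant, not on continuity alone. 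With these two repairs, both supplied by the paper's Lemma~\ref{lem:o_space_bnd}, your argument is essentially the paper's proof.
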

	We defer the proofs of \Cref{lem:linfty_bnd} and \Cref{thm:uniqueness_HO} to 
	\Cref{sec:well_posedness}. In addition to the bound on $u_h$ in 
	$L^{\infty}(0,T;L^2(\Omega)^d)$, the other key ingredient for proving 
	\Cref{thm:unique2d} is a novel \emph{discrete 
		version} of 
	the classic Ladyzhenskaya inequalities (see e.g. \cite[Section II.3]{Galdi:book})
	valid 
	for \emph{broken polynomial spaces}. We will discuss this further in 
	\Cref{sec:preliminaries}. Note that, similar to the continuous theory, the 
	scaling of the exponents in the discrete Ladyzhenskaya inequality with respect to the spatial dimension prevents us from extending the proof of uniqueness to $d=3$.
	
	\subsection{Error analysis}

	Our main result is a \emph{pressure-robust} 
	error estimate for the approximate 
	velocity arising from the numerical scheme \cref{eq:discrete_problem} under the 
	assumption that the problem data satisfies  
	\cref{eq:small_data_2}:%
	\begin{thm}[Velocity error] \label{thm:vel_error_estimates}
		Let $u$ be the strong velocity solution to the Navier--Stokes system
		\cref{eq:ns_equations} guaranteed by \Cref{thm:strong_solution} and assume
		it further satisfies
		\begin{equation*}
			u \in H^{k+1}(0,T;V\cap 
			H^2(\Omega)^d) \cap H^1(0,T;H^{k+1}(\Omega)^d),
		\end{equation*}
		with initial data $u_0 \in 
		H^{k+1}(\Omega)^d$. 
		Let $(u_h,\bar u_h) \in 
		\boldsymbol{\mathcal{V}}_h $ be an 
		approximate velocity solution 
		to the Navier--Stokes system computed using the space-time HDG scheme
		\cref{eq:discrete_problem} for $n= 0, \dots, N-1$ ,
		Then, there exists a constant $C>0$ such that the error $\boldsymbol{e}_h = 
		(u-u_h,\gamma(u) - \bar{u}_h)$ satisfies
		\begin{equation*}
			\norm[1]{e_{N}^-}_{L^2(\Omega)}^2   + \sum_{n=0}^{N-1} 
			\norm{\sbr{e_h}_n}_{L^2(\Omega)}^2 + 
			\nu \int_{0}^{T} \tnorm{ \boldsymbol{e}_h}_{v'}^2 \dif t 
			\le  \exp 
			\del[2]{CT} \del{   h^{2k} + \Delta t^{2k+2}  } C(u),
		\end{equation*}
		provided the time step satisfies $\Delta t \lesssim \nu$. Here, $C(u)$ depends on 
		Sobolev--Bochner norms of the velocity $u$, but is 
		independent of the pressure $p$.
	\end{thm}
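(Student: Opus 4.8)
The plan is to follow the classical strategy for nonlinear finite element error analysis, adapted to the space-time HDG setting with the tools of Chrysafinos and Walkington. First I would split the error as $\boldsymbol{e}_h = \boldsymbol{\eta} + \boldsymbol{\phi}_h$, where $\boldsymbol{\eta} = (u - \Pi_h u,\, \gamma u - \overline{\Pi_h u})$ is the interpolation error for a suitable space-time interpolant $\Pi_h u \in \boldsymbol{\mathcal{V}}_h$ and $\boldsymbol{\phi}_h = (\Pi_h u - u_h,\, \overline{\Pi_h u} - \bar u_h) \in \boldsymbol{\mathcal{V}}_h$ is the discrete error. The interpolant should be built from an $H(\text{div})$-conforming, pointwise solenoidal spatial projection (so that $\Pi_h u(t) \in V_h^{\text{div}}$, hence $\phi_h(t) \in V_h^{\text{div}}$ for a.e. $t$) composed with a time projection compatible with the DG-in-time jumps (a right-Radau type projection, or an exponential interpolant), and it must satisfy the optimal approximation bounds $\int_0^T \tnorm{\boldsymbol{\eta}}_{v'}^2 \dif t \lesssim \del{h^{2k} + \Delta t^{2k+2}} C(u)$ together with analogous control of $\partial_t\boldsymbol{\eta}$ and of $\boldsymbol{\eta}$ at the time nodes; the assumed regularity of $u$ then feeds $C(u)$ only through Sobolev--Bochner norms of $u$, never through $p$.

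Next I would derive the error equation: since $u$ is a strong solution it satisfies the discrete scheme \cref{eq:discrete_problem} on each slab by consistency, so subtracting gives, for all $\boldsymbol{v}_h \in \boldsymbol{\mathcal{V}}_h$, an identity for $\boldsymbol{\phi}_h$ whose right-hand side collects the residual $\boldsymbol{\eta}$ in the time-derivative, viscous, pressure and convective terms. Testing with $\boldsymbol{v}_h = \boldsymbol{\phi}_h$ and treating the discrete time derivative together with the jump and initial terms in the usual DG-in-time manner, then summing over slabs $n = 0,\dots,m-1$, telescopes the temporal contribution into $\tfrac12\norm{\phi_m^-}_{L^2(\Omega)}^2 + \tfrac12\sum_{n<m}\norm{\sbr{\phi_h}_n}_{L^2(\Omega)}^2 + \tfrac12\norm{\phi_0^+}_{L^2(\Omega)}^2$, while $\nu\int a_h(\boldsymbol{\phi}_h,\boldsymbol{\phi}_h)\dif t \ge C\nu\int\tnorm{\boldsymbol{\phi}_h}_v^2\dif t$ by \cref{eq:ah_coer_bnd} and $\int o_h(u_h;\boldsymbol{\phi}_h,\boldsymbol{\phi}_h)\dif t \ge 0$ by \cref{eq:ohequals} since $u_h(t)\in V_h^{\text{div}}$. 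The pressure term $\int b_h(\boldsymbol{p}-\boldsymbol{p}_h,\phi_h)\dif t$ vanishes identically because $\phi_h(t)$ is pointwise divergence-free with continuous normal trace, which by the definition \cref{eq:formB} annihilates both contributions of $b_h(\cdot,\phi_h)$ — this is exactly the mechanism that makes the estimate pressure-robust and eliminates $p$ from $C(u)$.

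It then remains to bound the right-hand side. The viscous residual $\nu a_h(\boldsymbol{\eta},\boldsymbol{\phi}_h)$ and the time-derivative residual are handled by continuity of $a_h$ in $\tnorm{\cdot}_{v'}$ from \cref{eq:ah_coer_bnd}, Cauchy--Schwarz in time, the approximation estimates for $\boldsymbol{\eta}$, and Young's inequality, with the $\tnorm{\boldsymbol{\phi}_h}_{v'}^2$ part absorbed into $C\nu\int\tnorm{\boldsymbol{\phi}_h}_v^2$ using the mesh-independent equivalence of $\tnorm{\cdot}_v$ and $\tnorm{\cdot}_{v'}$ on $\boldsymbol{V}_h$; the factor $\nu^{-1}$ produced by Young is compensated by the extra powers of $h$ and $\Delta t$ in $\tnorm{\boldsymbol{\eta}}_{v'}^2$, so no inverse power of $\nu$ survives. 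For the nonlinear term I would write $o_h(u_h;u_h,\phi_h) - o_h(u;u,\phi_h) = -o_h(u_h;\eta,\phi_h) - o_h(u_h;\phi_h,\phi_h) + \del{o_h(u_h;u,\phi_h) - o_h(u;u,\phi_h)}$: the middle term is nonnegative and discarded, the last is controlled by the Lipschitz bound \cref{eq:ohLip} together with $\tnorm{u(t)}_v \lesssim \norm{u}_{L^\infty(0,T;V)} \lesssim \nu$ from \Cref{thm:strong_solution}, and the first by the discrete Ladyzhenskaya inequality of \Cref{sec:preliminaries}, $\norm{\phi_h}_{L^4(\Omega)}^2 \lesssim \norm{\phi_h}_{L^2(\Omega)}\norm{\phi_h}_{1,h}$, followed by Young's inequality to split off an absorbable $\tnorm{\boldsymbol{\phi}_h}_v^2$ piece and a Grönwall-type piece $C(t)\norm{\phi_h}_{L^2(\Omega)}^2$ with $C(t)$ integrable and $p$-independent. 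Collecting everything, absorbing all $\tnorm{\boldsymbol{\phi}_h}_v^2$ contributions into the viscous term — which is where the restriction $\Delta t \lesssim \nu$ enters, through an inverse estimate in time needed to bound $\norm{\phi_h}_{L^\infty(I_n;L^2(\Omega))}$ in one of the nonlinear pieces — and applying a discrete Grönwall inequality over the slab index yields the bound for $\boldsymbol{\phi}_h$ with the $\exp(CT)$ factor; the triangle inequality $\boldsymbol{e}_h = \boldsymbol{\eta} + \boldsymbol{\phi}_h$ combined with the approximation estimates for $\boldsymbol{\eta}$ then gives the stated result.

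The main obstacle I anticipate is the nonlinear convective term: making the estimates work in three dimensions without any $\nu^{-1}$ blow-up and without reintroducing the pressure requires the correct discrete Ladyzhenskaya inequality for broken polynomial spaces and careful bookkeeping of which factor ($u$ versus $u_h$) occupies the transport slot of $o_h$, so that the positivity $o_h(u_h;\boldsymbol{\phi}_h,\boldsymbol{\phi}_h)\ge 0$ can be exploited, all tied to the a priori regularity $\norm{u}_{L^\infty(0,T;V)}\lesssim\nu$. A secondary difficulty is constructing the space-time interpolant so that it is simultaneously pointwise solenoidal, optimally accurate in $\tnorm{\cdot}_{v'}$, and compatible with the DG-in-time jump structure, so that the temporal convergence rate is the full $\Delta t^{2k+2}$ rather than something weaker.
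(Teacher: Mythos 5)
Your overall skeleton (split into interpolation error plus discrete error, test with the discrete error, exploit exact solenoidality of $\phi_h$ to kill $b_h(\boldsymbol{p}-\boldsymbol{p}_h,\phi_h)$ and get pressure robustness, use $o_h(u_h;\boldsymbol{\phi}_h,\boldsymbol{\phi}_h)\ge 0$, discrete Ladyzhenskaya, Gr\"onwall) matches the paper's strategy, but two of your key steps do not close, and they are precisely where the paper's machinery differs from yours. First, the critical nonlinear terms. In your decomposition the dangerous contribution is $o_h(\phi_h;u,\phi_h)$ (hidden inside $o_h(u_h;u,\phi_h)-o_h(u;u,\phi_h)=-o_h(\eta;u,\phi_h)-o_h(\phi_h;u,\phi_h)$), and bounding it by the Lipschitz estimate \cref{eq:ohLip} together with $\tnorm{u}_v\lesssim\nu$ produces a term of the form $C\nu\tnorm{\boldsymbol{\phi}_h}_v^2$ in which $C$ is \emph{not} at your disposal (it is the Lipschitz constant of $o_h$ times the constant in $\norm{u}_{L^\infty(0,T;V)}\lesssim\nu$), so it cannot be absorbed into the coercive term $C_1\nu\int\tnorm{\boldsymbol{\phi}_h}_v^2\dif t$. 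The paper avoids exactly this by using the sharper bound \cref{eq:o_space_bnd_2}, which retains a factor $\norm{\cdot}_{L^2(\Omega)}^{1/2}$ on the transport slot, followed by H\"older in time and Young with exponents $4$ and $4/3$ and a free parameter $\epsilon\sim\nu^{-1}$; the absorbable piece then carries an adjustable small constant, at the price of a term $\sim\nu\int_{I_n}\norm{\theta_h}_{L^2(\Omega)}^2\dif t$.

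Second, and this is the genuine missing ingredient, that leftover $\int_{I_n}\norm{\phi_h}_{L^2(\Omega)}^2\dif t$ (your ``Gr\"onwall-type piece'') cannot be fed into a discrete Gr\"onwall argument, because for $k\ge 2$ the DG-in-time energy identity only controls the nodal quantities $\norm{\phi_n^-}_{L^2(\Omega)}$, the jumps, and the dissipation, and the $L^2(I_n;L^2)$ norm of a degree-$k$ polynomial is not controlled by these. Your proposed fix, ``an inverse estimate in time to bound $\norm{\phi_h}_{L^\infty(I_n;L^2(\Omega))}$,'' goes the wrong way: inverse/scaling estimates such as \cref{eq:time_scaling} bound stronger norms by $\Delta t^{-1/2}$ times $\norm{\phi_h}_{L^2(I_n;L^2(\Omega))}$, which is circular, and the discrete Poincar\'e route $\norm{\phi_h}_{L^2}\le C_P\tnorm{\boldsymbol{\phi}_h}_v$ reintroduces the non-absorbable constant. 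The paper closes this loop with two ingredients you do not have: (i) the parabolic Stokes projection of \Cref{def:stokes_proj} (with the uniform bound of \Cref{lem:stokes_proj_bnd}, itself requiring the discrete Stokes operator built from static condensation), chosen so that the viscous and temporal residuals vanish identically from the error equation \cref{eq:error_eq}; and (ii) the slab-wise $L^2(L^2)$ bound of \Cref{eq:L2L2_bnd}, proved by testing the error equation with a \emph{discrete characteristic function}, which converts $\int_{I_n}\norm{\theta_h}_{L^2(\Omega)}^2\dif t$ into $\Delta t\norm{\theta_n^-}_{L^2(\Omega)}^2$ plus terms with factors $\nu^{1/2}\Delta t^{1/2}$ that are absorbable under $\Delta t\lesssim\nu$ --- which is where the time-step restriction actually enters, not through an inverse estimate. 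Your simpler choice of interpolant (spatially solenoidal projection composed with a DG-in-time projection, i.e.\ the paper's $\boldsymbol{\mathcal{P}}_h$) is fine for making the temporal residual and jump residuals vanish, but without the Stokes projection the $a_h$-residual also pollutes the characteristic-function argument you would still need; as written, your proof has no mechanism to produce the $\Delta t$-weighted $L^2(L^2)$ control, so the Gr\"onwall step cannot be executed for $k\ge 2$.
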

	The proof of \Cref{thm:vel_error_estimates} is deferred to
	\Cref{sec:vel_err}. We remark that the time step restriction
	$\Delta t \lesssim \nu$ in \Cref{thm:vel_error_estimates} is necessary
	in the proof of this theorem to use a discrete Gr\"onwall inequality;
	it is not necessary for the stability of the space-time HDG method
	\cref{eq:discrete_problem}, but rather to \emph{quantify the
		asymptotic rates of convergence}.

	\section{Preliminary results} \label{sec:preliminaries}
	
	\subsection{Properties of the numerical scheme}
	
	Let $\mathcal{V}_h^{\text{div}}$ denote the
	subspace of $\mathcal{V}_h$ of discrete divergence free velocity
	fields:
	\begin{equation*}
		\mathcal{V}_h^{\text{div}} = \cbr[2]{ u_h \in \mathcal{V}_h \; : \;
			\int_{I_n} b_h(\boldsymbol{q}_h,u_h)\dif t = 0, \quad \forall 
			\boldsymbol{q}_h \in \boldsymbol{\mathcal{Q}}_h}.
	\end{equation*}
	The following result motivates the 
	use of equal order polynomial degrees in time for both the velocity and 
	pressure approximation spaces:
	\begin{lem} \label{lem:kernel_equiv}
		$\mathcal{V}_h^{\text{div}} = \cbr{v_h \in \mathcal{V}_h \; | \;  v_h|_{\mathcal{E}^n} \in P_k(I_n;V_h^{\text{div}})}$.
	\end{lem}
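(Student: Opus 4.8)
The plan is to prove the two set inclusions separately. The inclusion "$\supseteq$" is essentially immediate: if $v_h|_{\mathcal{E}^n} \in P_k(I_n; V_h^{\mathrm{div}})$, then for any fixed $t \in I_n$ the spatial field $v_h(\cdot,t)$ lies in $V_h^{\mathrm{div}}$, so $b_h(\boldsymbol{q}_h(\cdot,t), v_h(\cdot,t)) = 0$ for every $\boldsymbol{q}_h \in \boldsymbol{Q}_h$, and in particular for the spatial part of any $\boldsymbol{q}_h \in \boldsymbol{\mathcal{Q}}_h$ evaluated at $t$. Integrating over $I_n$ gives $\int_{I_n} b_h(\boldsymbol{q}_h, v_h)\,\dif t = 0$, which is exactly membership in $\mathcal{V}_h^{\mathrm{div}}$.

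For the reverse inclusion "$\subseteq$", the key observation is that $b_h(\boldsymbol{q}_h, \cdot)$ acts only on the spatial variable and is linear, so the map $t \mapsto b_h(\boldsymbol{q}_h(\cdot,t), u_h(\cdot,t))$ depends polynomially on $t$. First I would fix a slab $I_n$ and expand $u_h|_{\mathcal{E}^n}(x,t) = \sum_{j=0}^{k} u_h^{(j)}(x)\, \varphi_j(t)$ in a basis $\{\varphi_j\}$ of $P_k(I_n)$, with each $u_h^{(j)} \in V_h$. Then I would choose test functions of the product form $\boldsymbol{q}_h(x,t) = \boldsymbol{r}_h(x)\, \psi(t)$ with $\boldsymbol{r}_h \in \boldsymbol{Q}_h$ and $\psi \in P_k(I_n)$ arbitrary; this is legitimate since such products span a set rich enough to detect each temporal mode. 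Bilinearity gives
\begin{equation*}
	\int_{I_n} b_h(\boldsymbol{q}_h, u_h)\,\dif t = \sum_{j=0}^{k} b_h(\boldsymbol{r}_h, u_h^{(j)}) \int_{I_n} \varphi_j(t)\,\psi(t)\,\dif t = 0 .
\end{equation*}
Since $\psi \in P_k(I_n)$ is arbitrary and the Gram matrix $\left( \int_{I_n} \varphi_i \varphi_j \,\dif t \right)_{ij}$ of the $L^2(I_n)$ inner product on $P_k(I_n)$ is nonsingular, this forces $b_h(\boldsymbol{r}_h, u_h^{(j)}) = 0$ for every $j = 0,\dots,k$ and every $\boldsymbol{r}_h \in \boldsymbol{Q}_h$. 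Hence each temporal coefficient $u_h^{(j)}$ lies in $V_h^{\mathrm{div}}$, so $u_h|_{\mathcal{E}^n} \in P_k(I_n; V_h^{\mathrm{div}})$.

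The one point requiring care — and the place where the hypothesis $k_t = k_s$ genuinely enters, as the lemma's preamble hints — is verifying that the test space $\boldsymbol{\mathcal{Q}}_h$ contains enough functions of separated form. Because $\bar{\mathcal{Q}}_h$ and $\mathcal{Q}_h$ are defined slab-wise as $P_{k_t}(I_n; \bar Q_h)$ and $P_{k_t}(I_n; Q_h)$, one can take $\psi \in P_{k_t}(I_n) = P_k(I_n)$ with full temporal degree, which is exactly what is needed to invert the Gram matrix above; had the temporal degree of the pressure space been smaller than $k$, only $\psi \in P_{k-1}(I_n)$ would be admissible and the argument would fail to pin down the top temporal mode of $u_h$. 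I expect this bookkeeping — confirming that separated-variable test functions lie in $\boldsymbol{\mathcal{Q}}_h$ and that $P_{k_t}(I_n)$ suffices — to be the main (though modest) obstacle; the rest is linear algebra on the finite-dimensional temporal factor. Finally, since the characterization holds on each slab $\mathcal{E}^n$ independently and both sides are defined slab-wise, assembling over $n = 0,\dots,N-1$ completes the proof.
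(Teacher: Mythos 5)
Your argument is correct and is essentially the standard tensor-product argument (expand in a temporal basis, test with separated functions $\boldsymbol{r}_h(x)\psi(t)$, and use nonsingularity of the temporal Gram matrix) that the paper itself omits, deferring to \cite[Lemma 2.3]{Chrysafinos:2010}. One small correction: the hypothesis that genuinely enters is that the \emph{temporal} degree of the pressure test spaces equals that of the velocity space (both are $P_{k_t}$ in time by construction, which is what the preamble about ``equal order polynomial degrees in time'' refers to), not the choice $k_t = k_s$; with unequal temporal degrees your Gram-matrix step would indeed fail to control the top temporal mode, exactly as you describe.
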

	\begin{proof}
		The proof is very similar to that of \cite[Lemma 
		2.3]{Chrysafinos:2010} with minor modifications and is therefore omitted. 
	\end{proof}
	An immediate consequence of \Cref{lem:kernel_equiv} is that
	$u_h(t) \in H$ a.e. $t \in (0,T)$ where $H$ is defined in
	\cref{eq:defH}. To see this we first expand $u_h$ in terms of an
	orthonormal basis $\cbr[0]{\phi_i}_{i=0}^k$ of $P_k(I_n)$ with respect
	to the $L^2(I_n)$ inner-product:
	\begin{equation}
		\label{eq:expansionuh}
		u_h = \sum_{i=0}^k \phi_i(t) u_i(x), \quad u_i \in V_h.
	\end{equation} 
	By \Cref{lem:kernel_equiv}, $u_h \in P_k(I_n;V_h^{\text{div}})$, so
	$u_i \in V_h^{\text{div}}$ for each $i=0,\dots,k$. Thus,
	\begin{equation*}
		0 = b_h(\boldsymbol{q}_h,u_i) = - \sum_{K\in\mathcal{T}_h}\int_{K}q_h 
		\nabla 
		\cdot u_i \dif x
		+ \sum_{K\in\mathcal{T}_h}\int_{\partial K} u_i \cdot n \bar{q}_h \dif 
		s, \quad 
		\forall \boldsymbol{q}_h \in \boldsymbol{Q}_h.
	\end{equation*}
	Following the same arguments as \cite[Proposition 1]{Rhebergen:2018a}
	it follows that $\nabla \cdot u_i = 0$ for all $x \in K$,
	$\jump{u_i \cdot n} = 0$ on all $F \in \mathcal{F}_h^{\text{int}}$,
	and $u_i \cdot n = 0$ on $\partial \Omega$ for $i=0,\hdots, k$. By
	\cref{eq:expansionuh} and since $H$ is a linear space the result
	follows.
	
	\begin{lem}[Consistency] \label{lem:consistency}
		Let $(u,p)$ be the strong solution to the Navier--Stokes system
		\cref{eq:ns_equations} guaranteed by \Cref{thm:strong_solution}.
		Define $\boldsymbol{u} = (u,\gamma(u))$ and
		$\boldsymbol{p} = (p,\gamma(p))$. Then, it holds that
		\begin{multline*}
			-\int_{I_n} (u, \partial_t v_h)_{\mathcal{T}_h} \dif t + 
			(u(t_{n+1}),v_{n+1}^-)_{\mathcal{T}_h} +  
			\int_{I_n} ( \nu a_h(\boldsymbol{u}, \boldsymbol{v}_h) + o_h(u; 
			\boldsymbol{u}, \boldsymbol{v}_h) + b_h(\boldsymbol{p}, v_h) ) \dif t                
			\\
			- \int_{I_n} b_h(\boldsymbol{q}_h, u) \dif t
			= (u(t_n),v_{n}^+)_{\mathcal{T}_h} + \int_{I_n} (f, 
			v_h)_{\mathcal{T}_h} 
			\dif t, \quad \forall (\boldsymbol{v}_h,\boldsymbol{q}_h) \in \boldsymbol{\mathcal{V}}_h \times \boldsymbol{\mathcal{Q}}_h.
		\end{multline*}
	\end{lem}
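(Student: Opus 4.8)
The plan is to show that the strong solution $(u,p)$ of \cref{eq:ns_equations}, together with its traces, satisfies the discrete variational identity \cref{eq:discrete_problem} on each slab $\mathcal{E}^n$. I would proceed by starting from the strong form and carefully integrating by parts element by element, matching each term against the definitions \cref{eq:forms} of $a_h$, $o_h$, and $b_h$. First, multiply \cref{eq:ns_equations_a} by $v_h$ with $(v_h,\bar v_h) \in \boldsymbol{\mathcal{V}}_h$, integrate over $K \times I_n$, and sum over $K \in \mathcal{T}_h$. The time-derivative term: since $u \in H^1(0,T;L^2(\Omega)^d)$, an integration by parts in time on each $I_n$ gives $\int_{I_n}(\partial_t u, v_h)_{\mathcal{T}_h}\dif t = -\int_{I_n}(u,\partial_t v_h)_{\mathcal{T}_h}\dif t + (u(t_{n+1}),v_{n+1}^-)_{\mathcal{T}_h} - (u(t_n),v_n^+)_{\mathcal{T}_h}$, which accounts for the temporal terms in the claimed identity (here $u$ is continuous in time so $u(t_n) = u(t_n^-) = u(t_n^+)$).

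For the spatial terms I would handle each piece separately. The viscous term $-\nu\int_{I_n}(\Delta u, v_h)_{\mathcal{T}_h}\dif t$: integrating by parts on each $K$ gives $\nu\int_{I_n}\sum_K(\int_K \nabla u:\nabla v_h - \int_{\partial K}\partial_n u \cdot v_h)\dif t$. Using the single-valuedness of $u$ and $\gamma(u)$ across interior facets (so that $\sum_K\int_{\partial K}\partial_n u\cdot \bar v_h = 0$ since $\bar v_h$ is single-valued and $\partial_n u$ has opposing normals — more precisely because $u \in H^{3/2+\epsilon}$ gives well-defined single-valued normal fluxes, this needs $\sum_K \int_{\partial K}\partial_n u \cdot(v_h - \bar v_h)$ to be the only surviving term), one rewrites this as $\nu\int_{I_n}a_h(\boldsymbol{u},\boldsymbol{v}_h)\dif t$, noting that the extra symmetrization term $\int_{\partial K}(u-\bar u)\cdot\partial_n v_h$ and the penalty term $\int_{\partial K}\frac{\alpha}{h_K}(u-\bar u)\cdot(v_h-\bar v_h)$ both vanish because $u = \bar u = \gamma(u)$ on $\partial K$ for the exact solution. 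For the convective term $\int_{I_n}(\nabla\cdot(u\otimes u),v_h)_{\mathcal{T}_h}\dif t$: integrate by parts on each $K$ to get $-\sum_K\int_K u\otimes u:\nabla v_h + \sum_K\int_{\partial K}(u\cdot n)(u\cdot v_h)$; since again $u = \bar u$ on facets, the boundary term equals $\sum_K\int_{\partial K}\tfrac12(u\cdot n)(u+\bar u)\cdot(v_h-\bar v_h) + \tfrac12|u\cdot n|(u-\bar u)\cdot(v_h-\bar v_h) + (\text{terms vanishing by } u=\bar u)$, matching $o_h(u;\boldsymbol{u},\boldsymbol{v}_h)$ with $w = u \in V$ (which lies in the appropriate space by the div-free property; note $o_h$'s first slot here uses the exact $u$, and one checks $\sum_K\int_{\partial K}(u\cdot n)\bar u\cdot\bar v_h = 0$ by single-valuedness). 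For the pressure term $\int_{I_n}(\nabla p, v_h)_{\mathcal{T}_h}\dif t = -\sum_K\int_K p\,\nabla\cdot v_h + \sum_K\int_{\partial K}(v_h\cdot n)p$, and since $p = \bar p = \gamma(p)$ this is exactly $\int_{I_n}b_h(\boldsymbol{p},v_h)\dif t$. Finally, $\nabla\cdot u = 0$ pointwise with $u\cdot n$ single-valued gives $b_h(\boldsymbol{q}_h,u) = 0$, so $\int_{I_n}b_h(\boldsymbol{q}_h,u)\dif t = 0$.

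The main obstacle I anticipate is the regularity bookkeeping: one must verify that all facet integrals are well-defined and that the trace identities $u|_{\partial K} = \gamma(u)$, $p|_{\partial K} = \gamma(p)$, and the single-valuedness of normal fluxes across interior facets genuinely hold. This is exactly why the remark preceding \Cref{thm:strong_solution} demands $(u,p) \in L^2(0,T;H^{3/2+\epsilon}(\Omega)^d \cap V)\times L^2(0,T;H^1(\Omega))$: the $H^{3/2+\epsilon}$ regularity ensures $\nabla u$ has an $L^2$ trace on $\partial K$ so that $\partial_n u \in L^2(\partial K)$, and $H^1(\Omega)$ regularity of $p$ gives it a single-valued $L^2$ trace. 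With these in place, every element-boundary term either collapses to the HDG facet form or cancels by single-valuedness against the single-valued test functions $\bar v_h, \bar q_h$, and assembling the pieces yields precisely the stated identity. The argument is essentially a careful accounting exercise once the trace regularity is invoked; no sharp estimates are needed.
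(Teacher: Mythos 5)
Your argument is correct: testing the strong form, integrating by parts in time on $I_n$ and element-wise in space, and using the trace regularity of the strong solution (so that $u=\gamma(u)$ and $p=\gamma(p)$ on $\partial K$, the penalty, symmetrization and upwind terms vanish, and the single-valued normal fluxes $\partial_n u$, $(u\cdot n)u$, $u\cdot n$ cancel against the single-valued facet test functions, which vanish on $\partial\Omega$ where also $u=0$) reproduces exactly $\nu a_h(\boldsymbol{u},\boldsymbol{v}_h)$, $o_h(u;\boldsymbol{u},\boldsymbol{v}_h)$, $b_h(\boldsymbol{p},v_h)$ and gives $b_h(\boldsymbol{q}_h,u)=0$. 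The paper states \Cref{lem:consistency} without proof, and your computation is precisely the standard argument it implicitly relies on, so there is nothing further to compare.
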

	
	\subsection{Scalings and embeddings}
	\label{ss:scale_embed}
	
	We begin by recalling a number of results for piece-wise
	polynomials. First, for polynomials in time, let
	$(V, (\cdot, \cdot)_V)$ be an inner product space. Then, there exists
	$C > 0$ such that for all $v \in P_k(I_n, V)$ (see e.g. \cite[Lemma
	3.5]{Chrysafinos:2010}):
	\begin{subequations}
		\begin{align}
			\label{eq:time_scaling}
			\norm{v}_{L^p(I_n,V)} &\le C \Delta t^{1/p-1/2} \norm{v}_{L^2(I_n,V)}, \qquad 1\le p \le \infty,
			\\
			\label{eq:time_inverse}
			\norm{\partial_t v}_{L^2(I_n,V)} &\le C \Delta t^{-1}\norm{v}_{L^2(I_n,V)}.
		\end{align}
	\end{subequations}
	
	\noindent Next, we recall the following discrete version of the
	Sobolev embedding theorem valid for broken polynomial spaces
	$P_r(\mathcal{T}_h) = \cbr{ f \in L^2(\Omega) \; | \; f|_{K} \in
		P_{r}(K), \; \forall K \in \mathcal{T}_h}$ where $r \ge 0$. Let
	$ 1\le p < \infty$, then for all $q$ satisfying
	$1 \le q \le pd/(d-p) \text{ if } 1 \le p < d$, or
	$1 \le q < \infty \text{ if } d \le p < \infty$, there exists a
	constant $C>0$ such that~\cite[Theorem 5.3]{Pietro:book}:
	\begin{equation} \label{eq:disc_sobolev}
		\norm{v_h}_{L^q(\Omega)} \le 
		C \norm{v_h}_{1,p,h},  \quad \forall v_h \in P_r(\mathcal{T}_h).
	\end{equation}
	In the case $p=2$, we write
	$\norm{\cdot}_{1,2,h} = \norm{\cdot}_{1,h}$. Note that choosing
	$p=q=2$ in \cref{eq:disc_sobolev} yields the discrete Poincar\'{e}
	inequality: $\norm{v_h}_{L^2(\Omega)} \le C_P \norm{v_h}_{1,h}$ for
	all $v_h \in V_h$. By the triangle inequality,
	$\norm{v_h}_{1,h} \le \tnorm{\boldsymbol{v}_h}_v$, so that
	\begin{equation}
		\label{eq:disc_poinc}
		\norm{v_h}_{L^2(\Omega)} \le C_P \tnorm{\boldsymbol{v}_h}_{\nu}, \quad \forall \boldsymbol{v}_h \in \boldsymbol{V}_h.
	\end{equation}

	We now prove a discrete version of the Ladyzhenskaya inequalities
	valid for broken polynomial spaces. While the analogue of these
	inequalities are well known in the context of $H^1$-conforming finite
	element methods \cite{Dupont:2009}, to our knowledge they have yet to
	be extended to \emph{non-conforming} finite element spaces.
	
	\begin{lem}[Ladyzhenskaya inequalities for broken polynomial spaces]
		There exists a constant $C>0$ such that for $d\in\cbr{2,3}$:
		\begin{equation} \label{eq:disc_lady_2d}
			\norm{v_h}_{L^4(\Omega)} \le C \norm{v_h}^{1/2(d-1)}_{L^2(\Omega)}
			\norm{v_h}_{1,h}^{3/2(5-d)}, \quad \forall v_h \in V_h.
		\end{equation}	
	\end{lem}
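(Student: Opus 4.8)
The plan is to reduce \eqref{eq:disc_lady_2d} to the classical continuous Ladyzhenskaya--Gagliardo--Nirenberg inequality by splitting $v_h$ into an $H^1_0$-conforming part and a small nonconforming remainder. A purely discrete argument cannot work: interpolating the crude bound $\norm{v_h}_{L^4(\Omega)}\le C\norm{v_h}_{1,h}$ (a consequence of \eqref{eq:disc_sobolev}) against the inverse estimate $\norm{v_h}_{1,h}\le Ch^{-1}\norm{v_h}_{L^2(\Omega)}$ would produce a spurious negative power of $h$. The interpolation therefore has to be carried out on functions that genuinely lie in $H^1(\Omega)$.

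First I would introduce a standard averaging (enriching) operator $\mathcal{A}_h\colon V_h\to V_h\cap H^1_0(\Omega)^d$ of Oswald / Karakashian--Pascal type (see e.g. \cite{Pietro:book}) and write $v_h=v_h^c+v_h^\perp$ with $v_h^c:=\mathcal{A}_h v_h$ and $v_h^\perp:=v_h-\mathcal{A}_h v_h\in V_h$. I will use the well-known approximation bounds
\[
\norm{v_h^\perp}_{L^2(\Omega)}^2\le C\sum_{F\in\mathcal{F}_h}h_F\norm{\jump{v_h}}_{L^2(F)}^2\le Ch^2\norm{v_h}_{1,h}^2,\qquad \sum_{K\in\mathcal{T}_h}\norm{\nabla v_h^\perp}_{L^2(K)}^2\le C\norm{v_h}_{1,h}^2,
\]
together with the global inverse bound $h\norm{v_h}_{1,h}\le C\norm{v_h}_{L^2(\Omega)}$ (elementwise inverse and discrete trace inequalities plus quasi-uniformity), which is the key device that keeps the $L^2$-norm of the remainder from contaminating the $L^2$-factor; in particular it gives $\norm{v_h^c}_{L^2(\Omega)}\le\norm{v_h}_{L^2(\Omega)}+Ch\norm{v_h}_{1,h}\le C\norm{v_h}_{L^2(\Omega)}$ and, by the triangle inequality and the second bound above, $\norm{\nabla v_h^c}_{L^2(\Omega)}\le C\norm{v_h}_{1,h}$.

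For the conforming part, since $v_h^c\in H^1_0(\Omega)^d$ I would apply the continuous inequality $\norm{w}_{L^4(\Omega)}\le C\norm{w}_{L^2(\Omega)}^{1-d/4}\norm{\nabla w}_{L^2(\Omega)}^{d/4}$, valid for $d\in\{2,3\}$ (see \cite[Sec.~II.3]{Galdi:book} or \cite{Dupont:2009}), and insert the bounds above to get $\norm{v_h^c}_{L^4(\Omega)}\le C\norm{v_h}_{L^2(\Omega)}^{1-d/4}\norm{v_h}_{1,h}^{d/4}$. For the nonconforming remainder I would use the elementwise inverse inequality $\norm{v_h^\perp}_{L^4(K)}\le Ch_K^{-d/4}\norm{v_h^\perp}_{L^2(K)}$, sum over $K$ using $\sum_K a_K^2\le\big(\sum_K a_K\big)^2$ for $a_K\ge0$ and quasi-uniformity to obtain $\norm{v_h^\perp}_{L^4(\Omega)}\le Ch^{-d/4}\norm{v_h^\perp}_{L^2(\Omega)}\le Ch^{1-d/4}\norm{v_h}_{1,h}$, and then factor $h^{1-d/4}\norm{v_h}_{1,h}=(h\norm{v_h}_{1,h})^{1-d/4}\norm{v_h}_{1,h}^{d/4}$; applying $h\norm{v_h}_{1,h}\le C\norm{v_h}_{L^2(\Omega)}$ gives $\norm{v_h^\perp}_{L^4(\Omega)}\le C\norm{v_h}_{L^2(\Omega)}^{1-d/4}\norm{v_h}_{1,h}^{d/4}$. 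Adding the two contributions and using the arithmetic identities $1-d/4=\tfrac{1}{2(d-1)}$ and $d/4=\tfrac{3}{2(5-d)}$, which hold for $d\in\{2,3\}$, yields \eqref{eq:disc_lady_2d}.

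The main obstacle here is conceptual rather than computational: recognizing that the naive discrete interpolation fails and that the interpolation must be shifted onto the conforming component, while the nonconforming remainder lands on the borderline scaling $h^{1-d/4}$ and is rescued only because $\norm{v_h^\perp}_{L^2(\Omega)}=\mathcal{O}(h\norm{v_h}_{1,h})$ and $h\norm{v_h}_{1,h}\lesssim\norm{v_h}_{L^2(\Omega)}$. Once this is set up, the rest reduces to standard inverse estimates, the approximation properties of the enriching operator, and the classical continuous inequality.
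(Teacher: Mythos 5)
Your proof is correct under the paper's standing assumptions, but it takes a genuinely different route from the paper's. You split $v_h$ into a conforming Oswald average $v_h^c\in V_h\cap H_0^1(\Omega)^d$ plus a nonconforming remainder, apply the continuous Ladyzhenskaya--Gagliardo--Nirenberg inequality to $v_h^c$, and absorb the remainder through local $L^4$--$L^2$ inverse estimates combined with the global inverse bound $h\norm{v_h}_{1,h}\lesssim\norm{v_h}_{L^2(\Omega)}$; the ingredients you invoke (Karakashian--Pascal-type bounds for the averaging operator, local inverse and trace inequalities, and the exponent identities $1-d/4=\tfrac{1}{2(d-1)}$, $d/4=\tfrac{3}{2(5-d)}$ for $d\in\cbr{2,3}$) are standard and correctly assembled. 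The paper instead argues entirely within the broken setting: for $d=2$ it inserts $v_h^2$ into the discrete Sobolev embedding \eqref{eq:disc_sobolev} with $p=1$, $q=2$ and bounds $\norm{v_h^2}_{1,1,h}\lesssim\norm{v_h}_{L^2(\Omega)}\norm{v_h}_{1,h}$ via Cauchy--Schwarz and a local discrete trace inequality, while for $d=3$ it uses $\norm{v_h}_{L^4(\Omega)}^4\le\norm{v_h}_{L^2(\Omega)}\norm{v_h}_{L^6(\Omega)}^3$ together with \eqref{eq:disc_sobolev} for $q=6$, $p=2$. This, incidentally, contradicts your opening claim that ``a purely discrete argument cannot work'': what fails is only the naive interpolation of $\norm{v_h}_{L^4(\Omega)}\lesssim\norm{v_h}_{1,h}$ against an inverse estimate, not discrete arguments as such. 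As for what each approach buys: the paper's proof is shorter and uses only shape-regular (local) estimates, so it does not need quasi-uniformity and would survive on locally refined meshes, whereas your global inverse bound $h\norm{v_h}_{1,h}\lesssim\norm{v_h}_{L^2(\Omega)}$ genuinely requires the quasi-uniformity the paper assumes; on the other hand, your conforming/nonconforming splitting makes the link to the classical inequality transparent and extends mechanically to any exponent configuration covered by the continuous Gagliardo--Nirenberg inequality, at the price of invoking the averaging operator and its approximation properties.
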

	\begin{proof}
		It suffices to consider the scalar case.  We focus first on
		the case $d=2$. Inserting $v_h^2$
		into~\cref{eq:disc_sobolev} with $r = 2k$, $p=1$, and
		$q = 2$ yields
		$\norm{v_h}_{L^4(\Omega)}^2 \le C
		\norm[1]{v_h^2}_{1,1,h}$. The result follows after noting
		that the right hand side can be bounded by applying the
		Cauchy--Schwarz inequality and a local discrete trace
		inequality
		$\norm{v_h}_{{L^2(F)}} \le C h_K^{-1/2} \norm{v_h}_{L^2(K)}$
		\cite[Lemma 1.46]{Pietro:book}:
		\begin{equation*}
			\label{eq:lady_inter_2}
			\frac{1}{2}\norm[1]{v_h^2}_{1,1,h}
			=   \sum_{K \in \mathcal{T}_h} \int_{K} \left|(\nabla v_h) 
			v_h \right| 
			\dif x
			+ \sum_{ F \in \mathcal{F}_h} \int_F  \left| \jump{v_h}\cdot 
			\av{v_h} \right| \dif s 
			\le C \norm{v_h}_{L^2(\Omega)} 
			\norm{v_h}_{1,h}.
		\end{equation*}
		For the case $d=3$, the result follows from the
		Cauchy--Schwarz inequality and \cref{eq:disc_sobolev} with
		$q = 6$ and $p=2$.
	\end{proof}
	
	For $d=3$, interpolating between $L^2(\Omega)^d$ and $L^4(\Omega)^d$ and using  \cref{eq:disc_lady_2d} yields:
	\begin{equation} \label{eq:L3_bnd_3d}
		\norm{v_h}_{L^3(\Omega)} \le C \norm{v_h}^{1/2}_{L^2(\Omega)}
		\norm{v_h}_{1,h}^{1/2}, \quad \forall v_h \in V_h.	
	\end{equation}

	\section{Well-posedness of the discrete problem} \label{sec:well_posedness}
	\subsection{Existence of a discrete solution}
	
	We will begin by showing the existence of a solution to the nonlinear
	system of algebraic equations arising from \cref{eq:discrete_problem}
	by making use of the following topological degree result taken from
	\cite[Lemma 6.42]{Pietro:book}:
	\begin{lem}\label{lem:top_degree}
		Let $(X , \; \norm{\cdot}_X)$ be a finite-dimensional normed space. Let $M 
		> 0$ 
		and let $\Psi : X \times [0,1] \to X$ satisfy
		\begin{enumerate}
			\item $\Psi$ is continuous.
			\item $\Psi(\cdot,0)$ is an affine function and the equation $\Psi(v,0) 
			= 0$
			has a solution $v \in X$ such that $\norm{v}_X < M$.
			\item For any $(v,\rho) \in X \times [0,1]$, $\Psi(v,\rho) = 0$ implies
			$\norm{v}_X < M$.
		\end{enumerate}
		Then, there exists $v \in X$ such that $\Psi(v,1) = 0$ and $\norm{v}_X < M$.
	\end{lem}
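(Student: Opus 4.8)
The plan is to read the result off from the standard properties of the Brouwer topological degree. Since $X$ is finite dimensional, I would first fix a linear isomorphism identifying $X$ with some $\mathbb{R}^N$; the degree is invariant under such a change of coordinates, so one may argue on open bounded subsets of $\mathbb{R}^N$. Write $B_M := \cbr{v \in X : \norm{v}_X < M}$, so that $\partial B_M = \cbr{v \in X : \norm{v}_X = M}$, and recall that the Brouwer degree $\deg(F, U, y)$ is defined for continuous $F$ on bounded open $U$ whenever $y \notin F(\partial U)$.

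The first step is to observe that the third hypothesis forbids zeros of $\Psi$ on $\partial B_M$ along the whole homotopy: if $\Psi(v,\rho) = 0$ then $\norm{v}_X < M$, hence $0 \notin \Psi(\partial B_M, \rho)$ for every $\rho \in [0,1]$. Therefore the integer $d(\rho) := \deg(\Psi(\cdot,\rho), B_M, 0)$ is well defined for each $\rho$. Since $\Psi$ is continuous on $X \times [0,1]$ (first hypothesis) and has no zero on $\partial B_M$ for any $\rho$, homotopy invariance of the degree gives $d(1) = d(0)$.

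The second step is to evaluate $d(0)$. By the second hypothesis $\Psi(\cdot,0)$ is affine; write $\Psi(v,0) = Av - b$ with $A : X \to X$ linear and $b \in X$, and let $v_0$ be a zero, $Av_0 = b$, with $\norm{v_0}_X < M$. I would then claim that $A$ is invertible: if it were not, $\ker A \neq \cbr{0}$ and the full zero set of $\Psi(\cdot,0)$ is the unbounded affine subspace $v_0 + \ker A$, which must meet $\partial B_M$, producing a zero of $\Psi(\cdot,0)$ of norm exactly $M$ and contradicting the third hypothesis at $\rho = 0$. Hence $A$ is invertible, $v_0$ is its unique zero and lies in $B_M$, and the normalization property of the degree for invertible linear maps gives $d(0) = \operatorname{sgn}(\det A) \in \cbr{-1, 1}$; in particular $d(0) \neq 0$. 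Combining the two steps, $d(1) = d(0) \neq 0$, so by the solution (existence) property of the degree there is $v \in B_M$ with $\Psi(v,1) = 0$, i.e.\ $\norm{v}_X < M$, which is the assertion.

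The only point demanding any care is the evaluation of $d(0)$: one must extract genuine \emph{invertibility} of the linear part $A$ from the third hypothesis, not merely the absence of boundary zeros — this is exactly where the unboundedness of $v_0 + \ker A$ is used. The remaining ingredients, namely existence, homotopy invariance, and linear normalization of the Brouwer degree, are entirely routine; one could equivalently bypass explicit degree theory by a continuation argument reducing to Brouwer's fixed-point theorem, but the degree-theoretic route is the most economical. Since the statement is quoted verbatim from \cite[Lemma 6.42]{Pietro:book}, one may of course instead simply cite it.
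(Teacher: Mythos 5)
Your argument is correct, and it is essentially the canonical proof of this result. Note that the paper itself offers no proof at all: the lemma is quoted verbatim from \cite[Lemma 6.42]{Pietro:book} and used as a black box, so there is nothing internal to compare against; your degree-theoretic argument is the standard one underlying the cited reference. All three ingredients you invoke are used correctly: hypothesis (3) keeps zeros off $\partial B_M$ uniformly in $\rho$, so $\deg(\Psi(\cdot,\rho),B_M,0)$ is well defined and homotopy invariance applies on $\overline{B}_M\times[0,1]$; and your evaluation of the degree at $\rho=0$ is the one step that genuinely needs care, which you handle properly --- if the linear part $A$ of the affine map $\Psi(\cdot,0)=A\cdot-b$ were singular, the zero set $v_0+\ker A$ would be an unbounded affine set containing a point of $B_M$, hence (by continuity of the norm along a ray in $\ker A$) would meet $\partial B_M$, contradicting (3) at $\rho=0$; invertibility then gives $d(0)=\operatorname{sgn}(\det A)\neq 0$, and the solution property of the degree yields a zero of $\Psi(\cdot,1)$ in the open ball. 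The only cosmetic remark is that the sign of $\det A$ depends on the chosen identification of $X$ with $\mathbb{R}^N$, but since only $d(0)\neq 0$ is needed this is immaterial. As you say, in the context of the paper one may simply cite the reference, but your proof is complete and self-contained.
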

	\noindent To apply \Cref{lem:top_degree} in \cref{sss:toplogdegarg} we
	first require the proof of \Cref{lem:energy_stability} and
	well-posedness of the space-time HDG discretization of the linear
	time-dependent Stokes problem as discussed next.
	
	\subsubsection{Proof of \Cref{lem:energy_stability}.}
	\begin{proof}
		Testing \cref{eq:discrete_problem} with $(v_h,\bar{v}_h,q_h,\bar{q}_h) 
		= (u_h,\bar{u}_h,p_h,\bar{p}_h)$,
		using the coercivity of $a_h(\cdot,\cdot)$ and the fact that 
		$o_h(\cdot; 
		\boldsymbol{u}_h, \boldsymbol{u}_h) \ge 0$, and integrating by parts in time,
		we find that there exists a constant $C_1>0$ such that
		\begin{equation*}
			\begin{split}
				\frac{1}{2} \norm[1]{u_{n+1}^-}_{L^2(\Omega)}^2  +& \; 
				\frac{1}{2}\norm[1]{ 
					\sbr{u_h}_n}_{L^2(\Omega)}^2 - 
				\frac{1}{2}\norm[1]{u_{n}^-}_{L^2(\Omega)}^2 \;+ \;
				C_1 \nu \int_{I_n} \tnorm{\boldsymbol{u}_h}_v^2 \dif t \le \int_{I_n} 
				(f , 
				u_h)_{\mathcal{T}_h} \dif t . 
			\end{split}
		\end{equation*}
		To bound the right hand side, we apply the Cauchy--Schwarz inequality, 
		the discrete Poincar\'{e} inequality \cref{eq:disc_poinc}, and Young's
		inequality with $\epsilon = C_1/(C_P\nu) > 0$. Rearranging, we see there is a constant $C_2>0$ 
		such that
		\begin{equation*}
			\norm[1]{u_{n+1}^-}_{L^2(\Omega)}^2 
			+ \norm[1]{ 
				\sbr{u_h}_n}_{L^2(\Omega)}^2 - 
			\norm[1]{u_{n}^-}_{L^2(\Omega)}^2+
			\nu C_1\int_{I_n} \tnorm{\boldsymbol{u}_h}_v^2 \dif t\le \frac{C_2}{ 
				\nu}\int_{I_n} \norm{f}_{L^2(\Omega)}^2 \dif t.
		\end{equation*}
		The result follows after summing over all space-time slabs. 
	\end{proof}
	
	\subsubsection{A linearized problem}
	Before we can apply the topological degree argument, we will need to study the 
	space-time HDG solution of the linear time-dependent Stokes problem: 
	\begin{subequations} \label{eq:stokes_equations}
		\begin{align}
			\partial_t u - \nu \Delta u + \nabla p &= f, && 
			\hspace{-10mm}
			\text{in } \Omega\times (0,T], \\
			\nabla \cdot u &= 0, && \hspace{-10mm}\text{in } \Omega\times (0,T], \\
			u &=0, && \hspace{-10mm}\text{on }  \partial \Omega\times (0,T], \\
			u(x,0) &= u_0(x), && \hspace{-10mm}\text{in } \Omega.
		\end{align}
	\end{subequations}
	\begin{lem} \label{lem:wellposed_stokes}
		There exists a unique pair $\boldsymbol{u}_h \in \mathcal{V}_h^{\text{div}} 
		\times \bar{\mathcal{V}}_h$ such that for all $\boldsymbol{v}_h \in \mathcal{V}_h^{\text{div}} 
		\times \bar{\mathcal{V}}_h$:
		\begin{equation}\label{eq:discrete_stokes_problem}
			-\int_{I_n} (u_h, \partial_t v_h)_{\mathcal{T}_h} \dif t + 
			(u_{n+1}^-,v_{n+1}^-)_{\mathcal{T}_h} +
			\int_{I_n} \nu a_h(\boldsymbol{u}_h, \boldsymbol{v}_h) \dif t \\
			= (u_{n}^-,v_{n}^+)_{\mathcal{T}_h} + \int_{I_n} (f, 
			v_h)_{\mathcal{T}_h} 
			\dif t.
		\end{equation}
		Note that this is simply the space-time HDG scheme applied to the 
		time-dependent Stokes problem \cref{eq:stokes_equations}.
	\end{lem}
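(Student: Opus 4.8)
The plan is to observe that \cref{eq:discrete_stokes_problem} is a square linear system posed on the finite-dimensional space $\mathcal{V}_h^{\text{div}} \times \bar{\mathcal{V}}_h$: since both the trial and test functions have been restricted to the discretely divergence-free subspace, the pressure--velocity coupling $b_h$ is absent, so the left-hand side defines a linear map on $\mathcal{V}_h^{\text{div}} \times \bar{\mathcal{V}}_h$, while the right-hand side (determined by $f$ and the datum $u_n^-$ inherited from the previous slab) is a bounded linear functional. Existence of a solution is therefore equivalent to uniqueness, and it suffices to show that the homogeneous problem, with $f = 0$ and $u_n^- = 0$, admits only $\boldsymbol{u}_h = \boldsymbol{0}$.

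To this end I would take $\boldsymbol{v}_h = \boldsymbol{u}_h$ in \cref{eq:discrete_stokes_problem} with $f = 0$ and $u_n^- = 0$. Integrating the term $-\int_{I_n}(u_h,\partial_t u_h)_{\mathcal{T}_h}\dif t$ by parts in time and combining with $(u_{n+1}^-,u_{n+1}^-)_{\mathcal{T}_h}$ produces the energy identity
\[
  \tfrac12\norm{u_n^+}_{L^2(\Omega)}^2 + \tfrac12\norm{u_{n+1}^-}_{L^2(\Omega)}^2 + \nu\int_{I_n} a_h(\boldsymbol{u}_h,\boldsymbol{u}_h)\dif t = 0,
\]
the right-hand side vanishing because $u_n^- = 0$. (In the inhomogeneous situation the same computation yields the time-jump contribution $\tfrac12\norm{\sbr{u_h}_n}_{L^2(\Omega)}^2$ together with a $\tfrac12\norm{u_n^-}_{L^2(\Omega)}^2$ term; both are zero here.) By \Cref{lem:kernel_equiv} we have $u_h(t) \in V_h^{\text{div}} \subset V_h$ for a.e.\ $t \in I_n$, so the coercivity estimate in \cref{eq:ah_coer_bnd} (valid since $\alpha$ is chosen sufficiently large) gives $a_h(\boldsymbol{u}_h(t),\boldsymbol{u}_h(t)) \ge C\tnorm{\boldsymbol{u}_h(t)}_v^2 \ge 0$ pointwise in time. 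All three terms on the left are nonnegative, hence each vanishes; in particular $\tnorm{\boldsymbol{u}_h(t)}_v = 0$ for a.e.\ $t \in I_n$, which by the definition of $\tnorm{\cdot}_v$ forces $\nabla u_h = 0$ on every $K$ and $\bar u_h = u_h$ on $\partial K$. The discrete Poincar\'e inequality \cref{eq:disc_poinc} then yields $u_h \equiv 0$ on $I_n$, and consequently $\bar u_h \equiv 0$ as well. This establishes uniqueness, and therefore existence, of the pair $\boldsymbol{u}_h \in \mathcal{V}_h^{\text{div}} \times \bar{\mathcal{V}}_h$.

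I do not anticipate a genuine obstacle here: the argument is the standard ``square linear system plus energy estimate'' template, and all the needed ingredients---the temporal integration by parts generating nonnegative jump terms, coercivity of $a_h$, the discrete Poincar\'e inequality, and the characterization of $\mathcal{V}_h^{\text{div}}$ from \Cref{lem:kernel_equiv}---are already in place. The only point meriting a little care is the bookkeeping in the integration by parts in time and the observation that, because $u_h(t)$ lies in the spatial space $V_h$ at a.e.\ fixed $t$, the spatial coercivity bound for $a_h$ may legitimately be applied under the time integral.
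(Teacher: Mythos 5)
Your argument is correct, and it is essentially the detailed version of what the paper compresses into its one-line appeal to the Lax--Milgram theorem: the heart of either route is the same energy computation (integration by parts in time producing nonnegative jump and terminal terms, plus coercivity of $a_h$ from \cref{eq:ah_coer_bnd} and the discrete Poincar\'e inequality \cref{eq:disc_poinc}). Packaging it as injectivity of a square finite-dimensional linear system rather than as coercivity of the space-time bilinear form is an equivalent and equally valid way to conclude.
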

	\begin{proof}
		The result follows from the	Lax--Milgram theorem.
	\end{proof}

	\subsubsection{The topological degree argument}
	\label{sss:toplogdegarg}
	
	\begin{thm} \label{thm:existence} Let $d\in \cbr{2,3}$ and $k\ge 1$.
		There exists at least one discrete velocity solution $\boldsymbol{u}_h \in 
		\mathcal{V}_h^{\text{div}} 
		\times \bar{\mathcal{V}}_h$ to \cref{eq:discrete_problem} for $n=0,\dots,N-1$ 
		satisfying the energy 
		estimate 
		\Cref{lem:energy_stability}.
	\end{thm}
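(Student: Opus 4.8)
The plan is to apply the topological degree result \Cref{lem:top_degree} on each space-time slab $\mathcal{E}^n$ inductively, working in the finite-dimensional space $X = \mathcal{V}_h^{\text{div}}|_{\mathcal{E}^n} \times \bar{\mathcal{V}}_h|_{\mathcal{E}^n}$ equipped with the norm $\boldsymbol{v}_h \mapsto (\int_{I_n}\tnorm{\boldsymbol{v}_h}_v^2\,\dif t)^{1/2}$ (which is a genuine norm on $X$ by the discrete Poincar\'e inequality \cref{eq:disc_poinc}). Working on $\mathcal{V}_h^{\text{div}}$ removes the pressure from the problem entirely, since $b_h(\boldsymbol{q}_h, u_h) = 0$ for test functions in the divergence-free subspace, so the homotopy need only involve the velocity. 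Assuming by induction that $u_n^-$ has been produced on the previous slab (with $u_0^- = P_h u_0$ on the first slab), I would define the homotopy map $\Psi(\boldsymbol{u}_h, \rho) \in X$ by
\begin{equation*}
\langle \Psi(\boldsymbol{u}_h,\rho), \boldsymbol{v}_h \rangle := -\int_{I_n}(u_h,\partial_t v_h)_{\mathcal{T}_h}\dif t + (u_{n+1}^-,v_{n+1}^-)_{\mathcal{T}_h} + \int_{I_n}\del{\nu a_h(\boldsymbol{u}_h,\boldsymbol{v}_h) + \rho\, o_h(u_h;\boldsymbol{u}_h,\boldsymbol{v}_h)}\dif t - (u_n^-,v_n^+)_{\mathcal{T}_h} - \int_{I_n}(f,v_h)_{\mathcal{T}_h}\dif t,
\end{equation*}
identified with an element of $X$ via the inner product; here I use that $X$ is finite-dimensional so the Riesz representative is well defined.

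Next I would verify the three hypotheses of \Cref{lem:top_degree}. Continuity of $\Psi$ in $(\boldsymbol{u}_h,\rho)$ is immediate since $a_h$ is bilinear, $o_h$ is (tri)linear in its arguments on the finite-dimensional space and jointly continuous, and the remaining terms are linear in $\boldsymbol{u}_h$. For hypothesis (2), $\Psi(\cdot,0) = 0$ is precisely the space-time HDG discretization of the time-dependent Stokes problem on the slab, restricted to $\mathcal{V}_h^{\text{div}}\times\bar{\mathcal{V}}_h$; \Cref{lem:wellposed_stokes} gives a (unique) solution, and testing that equation against itself and using coercivity of $a_h$ together with \cref{eq:disc_poinc} and Young's inequality gives the a priori bound $\int_{I_n}\tnorm{\boldsymbol{u}_h}_v^2\,\dif t \le C(\nu^{-2}\norm{f}_{L^2(I_n;L^2)}^2 + \norm{u_n^-}_{L^2}^2)$, so the solution lies strictly inside a ball of some radius $M$ once $M$ is chosen large enough. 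Hypothesis (3) is the essential estimate: if $\Psi(\boldsymbol{u}_h,\rho) = 0$ for some $\rho\in[0,1]$, then testing with $\boldsymbol{v}_h = \boldsymbol{u}_h$, integrating the time-derivative term by parts (producing $\tfrac12\norm{u_{n+1}^-}_{L^2}^2 + \tfrac12\norm{[u_h]_n}_{L^2}^2 - \tfrac12\norm{u_n^-}_{L^2}^2$), invoking coercivity of $a_h$ and the sign property $\rho\, o_h(u_h;\boldsymbol{u}_h,\boldsymbol{u}_h)\ge 0$ from \cref{eq:ohequals} (valid because $u_h \in \mathcal{V}_h^{\text{div}}$ implies $u_i \in V_h^{\text{div}}$ pointwise in time by \Cref{lem:kernel_equiv}), and bounding the right-hand side by Cauchy--Schwarz, \cref{eq:disc_poinc}, and Young's inequality, yields exactly the slab-local version of the bound in \Cref{lem:energy_stability}, independent of $\rho$. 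Choosing $M$ strictly larger than the resulting bound gives hypothesis (3), and \Cref{lem:top_degree} then produces $\boldsymbol{u}_h$ with $\Psi(\boldsymbol{u}_h,1) = 0$, i.e. a solution of \cref{eq:discrete_problem} on the slab.

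Finally I would assemble the global solution: solve slab $n=0$ using $u_0^- = P_h u_0$, set $u_1^- := u_0^{-}$-slab-solution evaluated at $t_1^-$, feed it into slab $n=1$, and iterate through $n=N-1$; summing the slab-local energy identities over $n=0,\dots,N-1$ telescopes the $\pm\tfrac12\norm{u_n^-}_{L^2}^2$ terms and, together with $\norm{u_0^-}_{L^2} = \norm{P_h u_0}_{L^2}\le\norm{u_0}_{L^2}$, reproduces the global estimate of \Cref{lem:energy_stability}. The pressure $\boldsymbol{p}_h$ is then recovered a posteriori from the full system \cref{eq:discrete_problem} using the discrete inf-sup stability of $b_h$ (as in \cite{Rhebergen:2018a}), though the statement only asserts existence of the velocity. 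The main obstacle is hypothesis (3): one must be careful that the sign condition on $o_h$ genuinely applies, which requires the pointwise-in-time membership $u_i \in V_h^{\text{div}}$ rather than merely $u_h \in \mathcal{V}_h^{\text{div}}$ — this is exactly what \Cref{lem:kernel_equiv} supplies, and it is the reason equal-order-in-time spaces were chosen; everything else is a routine repetition of the energy argument already carried out in the proof of \Cref{lem:energy_stability}.
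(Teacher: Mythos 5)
Your proposal is correct and follows essentially the same route as the paper: the topological degree lemma (\Cref{lem:top_degree}) applied to the pressure-free problem on $\mathcal{V}_h^{\text{div}}\times\bar{\mathcal{V}}_h$, with the homotopy scaling the convection term, the $\rho$-uniform bound obtained by repeating the energy argument of \Cref{lem:energy_stability} (using $o_h(u_h;\boldsymbol{u}_h,\boldsymbol{u}_h)\ge 0$ via \Cref{lem:kernel_equiv}), and \Cref{lem:wellposed_stokes} supplying the $\rho=0$ solution. The only difference is organizational: you run the degree argument slab-by-slab with a time-marching induction, whereas the paper sets it up once on the global space $X=\mathcal{V}_h^{\text{div}}\times\bar{\mathcal{V}}_h$ equipped with the global energy norm — both are valid and yield the same estimate after summing over slabs.
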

	\begin{proof}
		We set $X = \mathcal{V}_h^{\text{div}} \times \bar{\mathcal{V}}_h$ and 
		equip it with the norm
		\begin{equation*}
			\norm{\boldsymbol{u}_h}_X^2 := \norm[1]{u^{-}_{N}}_{L^2(\Omega)}^2 	
			+ \sum_{n=0}^{N-1} \norm[1]{ 
				\sbr{u_h}_n}_{L^2(\Omega)}^2
			+ \nu \int_{0}^T\tnorm{\boldsymbol{u}_h}_{v}^2 \dif t.
		\end{equation*}
		Define the continuous mapping $\Psi: X \times [0,1] \to X$ for $n=0,\dots, N-1,$
		by
		\begin{equation*}
			\begin{split}
				\int_{I_n}	(\Psi(\boldsymbol{u}_h, \rho), \boldsymbol{v}_h)_{0,h} \dif t=&  
				-\int_{I_n} 
				(u_h, \partial_t v_h)_{\mathcal{T}_h} \dif t + 
				(u_{n+1}^-,v_{n+1}^-)_{\mathcal{T}_h} +
				\int_{I_n}  \nu a_h(\boldsymbol{u}_h, \boldsymbol{v}_h) \dif t  \\
				& + \int_{I_n} \rho 
				o_h(u_h; 
				\boldsymbol{u}_h, \boldsymbol{v}_h)  \dif t- (u_{n}^-,v_{n}^+)_{\mathcal{T}_h} - \int_{I_n} (f, 
				v_h)_{\mathcal{T}_h} 
				\dif t. 
			\end{split}
		\end{equation*}
		$\Psi$ is well-defined by the Riesz 
		representation theorem, verifying item (1) in~\Cref{lem:top_degree}. 
		Next, we 
		choose $\boldsymbol{u}_h \in X$ such that
		$\Psi(\boldsymbol{u}_h,\rho) = 0$ for some $\rho \in [0,1]$. 
		Since $o_h(u_h;\boldsymbol{u}_h,\boldsymbol{u}_h) \ge 0$, we can repeat
		the proof of \Cref{lem:energy_stability} to bound 
		$\boldsymbol{u}_h$ 
		uniformly with respect to $\rho$:
		\begin{equation*}
			\norm[1]{u_{N}^-}_{L^2(\Omega)}^2 
			+ \sum_{n=0}^{N-1} \norm[1]{ 
				\sbr{u_h}_n}_{L^2(\Omega)}^2 +
			\nu \int_{0}^{T}\tnorm{\boldsymbol{u}_h}_v^2 \dif t \le C\del{
				\frac{1}{\nu}\int_{0}^{T}\norm{f}_{L^2(\Omega)}^2 \dif t \; + \;
				\norm[1]{u_{0}}_{L^2(\Omega)}^2} ,
		\end{equation*}
		which verifies item (3) in~\Cref{lem:top_degree} with
		\begin{equation*}
			M^2 = C\del{
				\frac{1}{\nu}\int_{0}^{T}\norm{f}_{L^2(\Omega)}^2 \dif t \; + \;
				\norm[1]{u_{0}}_{L^2(\Omega)}^2} + \epsilon,
		\end{equation*}
		for any $\epsilon > 0$.
		Finally, note that $\Psi(\cdot,0): X \to X$ is an affine function since the 
		nonlinear convection term disappears for $\rho = 0$. By 
		\Cref{lem:wellposed_stokes}, there exists a solution to 
		$\Psi(\boldsymbol{u}_h,0) 
		= 0$, verifying item (2) in~\Cref{lem:top_degree}. Therefore, there exists
		a solution $\boldsymbol{u}_h$ to $\Psi(\boldsymbol{u}_h,1) = 0$
		satisfying $\norm{\boldsymbol{u}_h}_X < M$.
		Equivalently, $\boldsymbol{u}_h \in 
		\mathcal{V}_h^{\text{div}} \times 
		\bar{\mathcal{V}}_h$
		solves \cref{eq:discrete_problem} for all $\boldsymbol{v}_h \in 
		\mathcal{V}_h^{\text{div}} \times 
		\bar{\mathcal{V}}_h$ and satisfies the energy bound in \Cref{lem:energy_stability}.
	\end{proof}
	\subsection{Uniqueness of the discrete velocity solution in two dimensions}
	\subsubsection{Bounds on the trilinear convection term}
	In the analysis that follows, we will require tighter bounds on the trilinear 
	convection form than is provided by \cref{eq:ohLip}. For this, we will make 
	extensive 
	use of the results of \Cref{ss:scale_embed}. We remark that, although we focus 
	on $d=2$ for the proof of uniqueness, the bound \cref{eq:o_space_bnd_2} will be 
	essential for the error analysis in both two and three spatial dimensions in 
	\Cref{sec:vel_err}.

	\begin{lem} \label{lem:o_space_bnd} If $d = 2$, there exists a $C>0$
		such that for all
		$\boldsymbol{w}_h, \boldsymbol{u}_h,\boldsymbol{v}_h \in
		\boldsymbol{V}_h$, 
		\begin{equation} \label{eq:o_space_bnd_1}
			\begin{split}
				&|o_h(w_h;\boldsymbol{u}_h,\boldsymbol{v}_h)|  \\ 
				&\le 
				C  \norm{w_h}_{L^2(\Omega)}^{1/2}\tnorm{\boldsymbol{u}_h}^{1/2}_{v} 
				\tnorm{\boldsymbol{v}_h}_{v}
				\del{
					\norm{w_h}_{L^2(\Omega)}^{1/2}\tnorm{\boldsymbol{u}_h}_{v}^{1/2}
					+ \norm{u_h}_{L^2(\Omega)}^{1/2} 
					\tnorm{\boldsymbol{w}_h}^{1/2}_{v} }.
			\end{split}
		\end{equation}
		Moreover, if $d \in \cbr{2,3}$, there exists a $C>0$
		such that
		for all $\boldsymbol{w}_h, \boldsymbol{u}_h,\boldsymbol{v}_h \in 
		\boldsymbol{V}_h$, 
		\begin{equation} \label{eq:o_space_bnd_2}
			|o_h(w_h;\boldsymbol{u}_h,\boldsymbol{v}_h)| \le C 
			\norm{w_h}_{L^2(\Omega)}^{1/2} \tnorm{\boldsymbol{w}_h}_{v}^{1/2} 
			\tnorm{\boldsymbol{u}_h}_v \tnorm{\boldsymbol{v}_h}_v.
		\end{equation}
	\end{lem}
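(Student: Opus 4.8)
The plan is to estimate each of the three terms in the definition~\eqref{eq:formO} of $o_h(w_h;\boldsymbol u_h,\boldsymbol v_h)$ separately, using H\"older's inequality on each element and each facet, and then to invoke the discrete Ladyzhenskaya inequality~\eqref{eq:disc_lady_2d} together with the discrete trace inequality and the equivalence of $\norm{\cdot}_{1,h}$ and $\tnorm{\cdot}_v$ on $\boldsymbol V_h$. For the volume term $\sum_K\int_K u_h\otimes w_h:\nabla v_h\,\dif x$, I would apply H\"older with exponents $(4,4,2)$ to get a bound by $\norm{u_h}_{L^4(\Omega)}\norm{w_h}_{L^4(\Omega)}\tnorm{\boldsymbol v_h}_v$. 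For the facet terms, using the local discrete trace inequality $\norm{\phi_h}_{L^2(\partial K)}\le Ch_K^{-1/2}\norm{\phi_h}_{L^2(K)}$ (Lemma~1.46 of~\cite{Pietro:book}) on $v_h-\bar v_h$ one recovers the factor $h_K^{-1/2}\norm{v_h-\bar v_h}_{L^2(\partial K)}$, i.e. a piece of $\tnorm{\boldsymbol v_h}_v$; the remaining factors $w_h\cdot n$ and $u_h+\bar u_h$ (resp. $u_h-\bar u_h$) on $\partial K$ are handled by H\"older on the facet, a trace inequality, and then control of $\bar u_h$ via $\norm{\bar u_h - u_h}_{L^2(\partial K)} + \norm{u_h}_{L^2(\partial K)}$, the first being part of $\tnorm{\boldsymbol u_h}_v$ and the second again controlled by a discrete trace inequality. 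The upshot is that, in $d=2$, every term is bounded by $C\,\norm{w_h}_{L^4(\Omega)}\norm{u_h}_{L^4(\Omega)}\tnorm{\boldsymbol v_h}_v$, possibly with $\norm{u_h}_{L^4(\Omega)}$ replaced in places by $\tnorm{\boldsymbol u_h}_v$.

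For~\eqref{eq:o_space_bnd_1}, I would then feed the Ladyzhenskaya inequality~\eqref{eq:disc_lady_2d} with $d=2$, namely $\norm{z_h}_{L^4(\Omega)}\le C\norm{z_h}_{L^2(\Omega)}^{1/2}\norm{z_h}_{1,h}^{1/2}\le C\norm{z_h}_{L^2(\Omega)}^{1/2}\tnorm{\boldsymbol z_h}_v^{1/2}$, into the estimate with $z_h = w_h$ and $z_h = u_h$. This produces the symmetric-looking product $\norm{w_h}_{L^2(\Omega)}^{1/2}\tnorm{\boldsymbol w_h}_v^{1/2}\norm{u_h}_{L^2(\Omega)}^{1/2}\tnorm{\boldsymbol u_h}_v^{1/2}\tnorm{\boldsymbol v_h}_v$; the asymmetry in the stated bound~\eqref{eq:o_space_bnd_1} — with the two terms inside the parentheses — arises because the volume term and one facet term carry $\tnorm{\boldsymbol u_h}_v$ rather than $\norm{u_h}_{L^4(\Omega)}$ as the ``$u$'' factor, so that only one factor of $\norm{u_h}_{L^4}$ gets split, giving $\norm{w_h}_{L^2}^{1/2}\tnorm{\boldsymbol u_h}_v^{1/2}\tnorm{\boldsymbol v_h}_v\cdot\norm{w_h}_{L^2}^{1/2}\tnorm{\boldsymbol u_h}_v^{1/2}$, while the genuinely trilinear convective pieces give the $\norm{u_h}_{L^2}^{1/2}\tnorm{\boldsymbol w_h}_v^{1/2}$ contribution. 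I would organize the write-up to make transparent which term yields which summand inside the parentheses.

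For~\eqref{eq:o_space_bnd_2}, valid in both $d=2$ and $d=3$, the strategy changes slightly: rather than splitting both $w_h$ and $u_h$ in $L^4$, I would keep $\tnorm{\boldsymbol u_h}_v$ and $\tnorm{\boldsymbol v_h}_v$ intact (using H\"older with exponents $(3,6)$ in the volume term, so that $u_h$ sits in $L^6$ with $\norm{u_h}_{L^6(\Omega)}\le C\tnorm{\boldsymbol u_h}_v$ by~\eqref{eq:disc_sobolev} in $d\le3$, and $w_h$ sits in $L^3$), and then apply~\eqref{eq:L3_bnd_3d}, i.e. $\norm{w_h}_{L^3(\Omega)}\le C\norm{w_h}_{L^2(\Omega)}^{1/2}\norm{w_h}_{1,h}^{1/2}$, to split only the $w_h$ factor. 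In $d=2$ one can simply use~\eqref{eq:disc_lady_2d} to get $\norm{w_h}_{L^4}\le C\norm{w_h}_{L^2}^{1/2}\tnorm{\boldsymbol w_h}_v^{1/2}$ and bound $\norm{u_h}_{L^4}\le C\tnorm{\boldsymbol u_h}_v$. Either way the result is $C\norm{w_h}_{L^2(\Omega)}^{1/2}\tnorm{\boldsymbol w_h}_v^{1/2}\tnorm{\boldsymbol u_h}_v\tnorm{\boldsymbol v_h}_v$.

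The main obstacle I anticipate is the careful bookkeeping of the facet terms: one must insert and remove traces consistently, track the powers of $h_K$ so that exactly the right combination $h_K^{-1/2}\norm{\bar\cdot - \cdot}_{L^2(\partial K)}$ emerges to feed into $\tnorm{\cdot}_v$, and be sure that the leftover boundary norms of $u_h$, $w_h$ (as opposed to $u_h-\bar u_h$) can be absorbed by inverse/trace estimates into interior $L^2$ or $L^4$ norms without spurious negative powers of $h$. The volume term is routine; it is the three facet integrals, and in particular the $\tfrac12|w_h\cdot n||u_h-\bar u_h||v_h-\bar v_h|$ term where two of the three factors are jumps, that require the most care to land precisely on the stated right-hand sides.
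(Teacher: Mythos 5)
Your overall plan---splitting $o_h$ into the volume term and the two facet terms via $u_h+\bar u_h=2u_h+(\bar u_h-u_h)$, applying H\"older elementwise and facetwise, and invoking the discrete trace inequalities, the facet scaling, and the discrete Ladyzhenskaya/Sobolev embeddings---is the same as the paper's, and your sketch of \cref{eq:o_space_bnd_2} (H\"older with exponents $(3,6,2)$ in $d=3$ together with \cref{eq:L3_bnd_3d} and \cref{eq:disc_sobolev}, or $(4,4,2)$ in $d=2$) matches the paper's proof essentially verbatim.

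For \cref{eq:o_space_bnd_1}, however, there is a genuine gap in how you propose to produce the two summands. First, the volume term cannot contribute the summand $\norm{w_h}_{L^2(\Omega)}\tnorm{\boldsymbol{u}_h}_v\tnorm{\boldsymbol{v}_h}_v$ as you claim: that would require $\norm{u_h}_{L^\infty(\Omega)}\lesssim\tnorm{\boldsymbol{u}_h}_v$, which fails in two dimensions. In the paper the volume term and the facet term containing $u_h$ (not its jump) both yield the \emph{symmetric} summand $\norm{w_h}_{L^2}^{1/2}\tnorm{\boldsymbol{w}_h}_v^{1/2}\norm{u_h}_{L^2}^{1/2}\tnorm{\boldsymbol{u}_h}_v^{1/2}\tnorm{\boldsymbol{v}_h}_v$ after splitting \emph{both} $L^4$ norms by \cref{eq:disc_lady_2d}. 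Second, and more seriously, for the jump--jump facet term your recipe (put $w_h$ in $L^4$, then split it by Ladyzhenskaya) gives $\norm{w_h}_{L^2}^{1/2}\tnorm{\boldsymbol{w}_h}_v^{1/2}\tnorm{\boldsymbol{u}_h}_v\tnorm{\boldsymbol{v}_h}_v$, which is \emph{not} dominated by the right-hand side of \cref{eq:o_space_bnd_1} with a mesh-independent constant (the ratios to the two summands are $\del{\tnorm{\boldsymbol{w}_h}_v/\norm{w_h}_{L^2}}^{1/2}$ and $\del{\tnorm{\boldsymbol{u}_h}_v/\norm{u_h}_{L^2}}^{1/2}$, both of which can blow up like $h^{-1/2}$); and with that bound the Young-inequality bookkeeping in the uniqueness argument, which relies on $\tnorm{\boldsymbol{w}_h}_v$ appearing with power at most $3/2$, no longer closes. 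The fix is the paper's treatment of this term: do \emph{not} split $w_h$ at all. Bound $\norm{w_h\cdot n}_{L^2(\partial K)}\le Ch_K^{-1/2}\norm{w_h}_{L^2(K)}$ and spend the facet scaling \cref{lem:facet_scaling} (with $d=2$, $p=4$, i.e. $\norm{\mu}_{L^4(\partial K)}\le Ch_K^{-1/4}\norm{\mu}_{L^2(\partial K)}$) on each of the two jump factors, so the $h$-powers recombine exactly into $h_K^{-1/2}\norm{u_h-\bar u_h}_{L^2(\partial K)}$ and $h_K^{-1/2}\norm{v_h-\bar v_h}_{L^2(\partial K)}$, leaving $\norm{w_h}_{L^2(\Omega)}$ unsplit; it is this term, not the volume term, that produces the first summand $\norm{w_h}_{L^2(\Omega)}\tnorm{\boldsymbol{u}_h}_v\tnorm{\boldsymbol{v}_h}_v$ of \cref{eq:o_space_bnd_1}.
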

	\begin{proof}
		This proof relies on the following scaling identity for
		$\mu \in P_k(\partial K)$ between $L^p$ and $L^2$ norms on element
		boundaries which can be obtained using standard arguments:
		\begin{equation}
			\label{lem:facet_scaling}
			\norm{\mu}_{L^p(\partial K)} \le Ch^{(d-1)(1/p-1/2)}
			\norm{\mu}_{L^2(\partial K)}, \qquad 2 \le p < \infty.
		\end{equation}
		Now, split $o_h(w_h,\boldsymbol{u}_h,\boldsymbol{v}_h)$ 
		into three terms and bound each separately. Using that $u_h + \bar{u}_h = 
		2u_h 
		+ (\bar{u}_h - u_h)$, we find:
		\begin{equation}
			\begin{split}
				|o_h&(w_h,\boldsymbol{u}_h,\boldsymbol{v}_h)| \\
				\le&\sum_{K \in \mathcal{T}_h} 
				\int_{K} |(u_h\otimes w_h) : \nabla v_h| \dif x
				+ \sum_{K \in \mathcal{T}_h} \int_{\partial K}| w_h 
				\cdot n 
				\del{u_h} \cdot 
				(v_h-\bar{v}_h)| \dif s \\
				&+ \sum_{K \in \mathcal{T}_h} \int_{\partial K}|w_h 
				\cdot 
				n||(u_h-\bar{u}_h)  \cdot 
				(v_h-\bar{v}_h)| \dif s = T_1 + T_2 + T_3.
			\end{split}
		\end{equation}
		To show \cref{eq:o_space_bnd_1}, we first apply the generalized H\"{o}lder 
		inequality to $T_1$ with $p = q = 4$ and $r=2$, the Cauchy--Schwarz 
		inequality, and \cref{eq:disc_lady_2d} to find:
		\begin{equation*}
			\begin{split}
				|T_1| 
				& \le   C \norm{u_h}_{L^2(\Omega)}^{1/2} 
				\tnorm{\boldsymbol{u}_h}_v^{1/2} \norm{w_h}_{L^2(\Omega)}^{1/2} 
				\tnorm{\boldsymbol{w}_h}_v^{1/2} \tnorm{\boldsymbol{v}_h}_v. 
			\end{split}
		\end{equation*}
		To bound $T_2$, we apply the generalized H\"{o}lder inequality 
		with $p=q=4$ and $r=2$ and use the local discrete trace inequality 
		$\norm{v_h}_{L^p(\partial 
			K)} \le C h_K^{-1/p}\norm{v_h}_{L^p(K)}$ (see e.g. \cite[Lemma 
		1.52]{Pietro:book}) for $p=4$, we have
		\begin{equation*}
			\begin{split}
				|T_2| &\le C \sum_{K \in \mathcal{T}_h} 
				\norm{w_h}_{L^4(K)}  \norm{u_h}_{L^4(K)} 
				h_K^{-1/2}\norm{v_h - \bar{v}_h}_{L^2(\partial K)}.
			\end{split}
		\end{equation*}
		Applying the Cauchy--Schwarz inequality and \cref{eq:disc_lady_2d} we find
		\begin{equation*} \label{eq:T_2_inter}
			\begin{split}
				|T_2| 
				& \le C \norm{u_h}_{L^2(\Omega)}^{1/2} 
				\tnorm{\boldsymbol{u}_h}_v^{1/2} \norm{w_h}_{L^2(\Omega)}^{1/2} 
				\tnorm{\boldsymbol{w}_h}_v^{1/2} \tnorm{\boldsymbol{v}_h}_v. 
			\end{split}
		\end{equation*}
		To bound $T_3$, we again apply the generalized H\"{o}lder inequality 
		with $p=q=4$ and $r=2$, the local discrete trace inequality $\norm{v_h 
			\cdot 
			n}_{L^2(\partial K)} \le C h_K^{-1/2} \norm{v_h}_{L^2(K)}$,  
		\cref{lem:facet_scaling} with $d=2$ and $p=4$, and the Cauchy--Schwarz 
		inequality to find
		\begin{equation*}
			\begin{split}
				|T_3| 
				&
				\le C \norm{w_h}_{L^2(\Omega)}\tnorm{\boldsymbol{u}_h}_v
				\tnorm{\boldsymbol{v}_h}_v.
			\end{split}
		\end{equation*}
		Summing the bounds on $T_1$, $T_2$, and $T_3$ yields the result.

		The proof of \cref{eq:o_space_bnd_2} differs in the cases of $d=2$ and 
		$d=3$. We
		begin with $d=3$. To bound $T_1$, we first apply the generalized H\"{o}lder 
		inequality with $p =3$,  
		$q = 6$ and $r=2$ followed by the Cauchy--Schwarz inequality to find:
		\begin{equation*}
			\begin{split}
				|T_1| 
				& \le  \norm{w_h}_{L^3(\Omega)} \norm{u_h}_{L^6(\Omega)}
				\tnorm{\boldsymbol{v}_h}_{v} .
			\end{split} 
		\end{equation*}
		Now applying \cref{eq:L3_bnd_3d} and \cref{eq:disc_sobolev} with 
		$q=6$,
		we have
		\begin{equation*}
			|T_1| \le  C 
			\norm{w_h}_{L^2(\Omega)}^{1/2} \tnorm{\boldsymbol{w}_h}_{v}^{1/2} 
			\tnorm{\boldsymbol{u}_h}_v \tnorm{\boldsymbol{v}_h}_v.
		\end{equation*}
		To bound $T_2$, we apply the generalized H\"{o}lder inequality with $p 
		=3$,  
		$q = 6$ and $r=2$ to find
		\begin{equation*}
			|T_2| \le  \sum_{K \in \mathcal{T}_h}  \norm{w_h\cdot n}_{L^3(\partial 
				K)} 
			\norm{u_h}_{L^6(\partial K)} 
			\norm{v_h - \bar{v}_h}_{L^2(\partial K)} .
		\end{equation*}
		Next, using the local discrete trace inequality $\norm{v_h}_{L^p(\partial 
			K)} \le C h_K^{-1/p}\norm{v_h}_{L^p(K)}$ (see e.g. \cite[Lemma 
		1.52]{Pietro:book}) for $p=3$ and $p=6$, the Cauchy--Schwarz inequality,  
		\cref{eq:L3_bnd_3d}, and \cref{eq:disc_sobolev} with 
		$q=6$, we have
		\begin{equation*}
			\begin{split}
				|T_2| 
				&\le C
				\norm{w_h}_{L^2(\Omega)}^{1/2} \tnorm{\boldsymbol{w}_h}_{v}^{1/2} 
				\tnorm{\boldsymbol{u}_h}_v \tnorm{\boldsymbol{v}_h}_v.
			\end{split}
		\end{equation*}
		To bound $T_3$, we again apply the generalized H\"{o}lder inequality with 
		$p 
		=3$,  
		$q = 6$ and $r=2$:
		\begin{equation*}
			\begin{split}
				|T_3| &\le \sum_{K \in \mathcal{T}_h} 
				\norm{w_h\cdot n}_{L^3(\partial K)} \norm{u_h - 
					\bar{u}_h}_{L^6(\partial K)} 
				\norm{v_h - \bar{v}_h}_{L^2(\partial K)}.
			\end{split}
		\end{equation*}
		Now, applying the local discrete trace inequality $\norm{v_h \cdot 
			n}_{L^p(\partial K)} \le C h_K^{-1/p} \norm{v_h}_{L^p(K)}$ with 
		$p=3$, \cref{lem:facet_scaling} with $d = 3$ and  $p=6$, the discrete 
		Cauchy--Schwarz inequality, and \cref{eq:L3_bnd_3d}, we have
		\begin{equation*}
			\begin{split}
				|T_3| 
				&\le  C \norm{w_h}_{L^2(\Omega)}^{1/2} 
				\tnorm{\boldsymbol{w}_h}_v^{1/2} \tnorm{\boldsymbol{u}_h}_v 
				\tnorm{\boldsymbol{v}_h}_v.
			\end{split}
		\end{equation*}
		Summing the bounds on $T_1$, $T_2$, and $T_3$ yields the result.
		The case for $d=2$ follows similarly, instead using $p=q=4$ and $r=2$ in 
		the generalized H\"{o}lder inequality, \cref{eq:disc_sobolev} with $q=4$, 
		and \cref{lem:facet_scaling} with $d=2$ and $p=4$. 
	\end{proof}

	\subsubsection{Proof of \Cref{lem:linfty_bnd}}
	
	\begin{proof}
		Let $\tilde{\boldsymbol{u}}_h$ be the exponential interpolant of 
		$\boldsymbol{u}_h$ as defined in  \cref{eq:exp_interp_property}. Testing 
		\cref{eq:discrete_problem} with 
		$\tilde{\boldsymbol{u}}_h$,
		integrating by parts in time, and using the defining properties of the 
		exponential 
		interpolant, 
		we 
		have
		\begin{equation} \label{eq:energy_improve_inter1}
			\begin{split}
				\frac{1}{2} \int_{I_n} \od{}{t} 
				\norm{u_h(t)}_{L^2(\Omega)}^2e^{-\lambda(t-t_n)} 
				\dif t + 
				\norm[1]{u_{n}^+}_{L^2(\Omega)}^2& + 
				\int_{I_n} ( \nu a_h(\boldsymbol{u}_h, \tilde{\boldsymbol{u}}_h) + 
				o_h(u_h; 
				\boldsymbol{u}_h, \tilde{\boldsymbol{u}}_h) ) 
				\dif t 
				\\ \notag
				&= (u_{n}^-,u_{n}^+)_{\mathcal{T}_h} + \int_{I_n} (f, 
				\tilde{\boldsymbol{u}}_h)_{\mathcal{T}_h} 
				\dif t.
			\end{split}
		\end{equation}
		Integrating by parts again in time and 
		applying the Cauchy--Schwarz inequality and Young's inequality to the first term on the 
		right 
		hand side,
		we have
		\begin{equation*} 
			\begin{split}
				\frac{\lambda}{2}\int_{I_n}& \norm{u_h(t)}_{L^2(\Omega)}^2 
				e^{-\lambda(t-t_n)} \dif t 
				+ 
				\frac{1}{2}\norm[1]{u_{n+1}^-}^2_{L^2(\Omega)} e^{-\lambda\Delta 
					t}   
				\\
				&\le \frac{1}{2}\norm[1]{u_{n}^-}_{L^2(\Omega)}^2+ \int_{I_n} (f, 
				\tilde{u}_h)_{\mathcal{T}_h} 
				\dif t-\int_{I_n} ( \nu a_h(\boldsymbol{u}_h, 
				\tilde{\boldsymbol{u}}_h) + 
				o_h(u_h; 
				\boldsymbol{u}_h, \tilde{\boldsymbol{u}}_h) ) 
				\dif t .
			\end{split}
		\end{equation*}
		We now focus on bounding the right hand side. By the boundedness of 
		$a_h(\cdot,\cdot)$ \cref{eq:ah_coer_bnd} and \cref{eq:lp_bnd_exp_interp}, there exists a constant 
		$C_1>0$ such that
		\begin{equation*}
			\int_{I_n}  | a_h(\boldsymbol{u}_h,\tilde{\boldsymbol{u}}_h)| \dif t  
			\le C_1 \int_{I_n} \tnorm{\boldsymbol{u}_h}_v^2 \dif t.
		\end{equation*}
		In two spatial dimensions, we can use \Cref{lem:o_space_bnd}, 
		\cref{eq:lp_bnd_exp_interp}, and hence Young's inequality with some $\epsilon_1 > 0$ to
		find there exists a constant $C_2>0$ such that
		\begin{equation*}
			\begin{split}
				\int_{I_n} 
				o_h(u_h; 
				\boldsymbol{u}_h, \tilde{\boldsymbol{u}}_h)  
				\dif t \le 
				\frac{\epsilon_1}{2}\norm{u_h}_{L^{\infty}(I_n;L^2(\Omega))}^2 
				+ 
				\frac{C_2}{2\epsilon_1}\del[2]{\int_{I_n}  
					\tnorm{\boldsymbol{u}_h}_v^2 \dif t}^2.
			\end{split}
		\end{equation*}
		Next by the Cauchy--Schwarz inequality, Young's inequality, the discrete 
		Poincar\'{e} inequality, and \cref{eq:lp_bnd_exp_interp}, there exists a 
		constant $C_3> 0$ such that for some $\epsilon_2 > 0$,
		\begin{equation*}
			\begin{split}
				\int_{I_n} (f, 
				\tilde{u}_h)_{\mathcal{T}_h} \dif t
				\le \frac{1}{2\epsilon_2}\int_{I_n} \norm{f}_{L^2(\Omega)}^2 \dif 
				t + 
				\frac{C_3\epsilon_2}{2}\int_{I_n} 
				\tnorm{\boldsymbol{u}_h}_{v}^2 \dif t.
			\end{split}
		\end{equation*}
		Thus,
		\begin{equation*} 
			\begin{split}
				&\frac{\lambda}{2}\int_{I_n} \norm{u_h}_{L^2(\Omega)}^2 
				e^{-\lambda(t-t_n)} \dif t
				+ 
				\frac{1}{2}\norm[1]{u_{n+1}^-}_{L^2(\Omega)} e^{-\lambda\Delta t} - 
				\frac{\epsilon_1}{2}\norm{u_h}_{L^{\infty}(I_n;L^2(\Omega))}^2
				\\
				&\le  \frac{1}{2}\norm[1]{u_{n}^-}_{L^2(\Omega)}^2  + 
				\frac{1}{2\epsilon_2}\int_{I_n} \norm{f}_{L^2(\Omega)}^2 \dif t + 
				\frac{C_3\epsilon_2}{2}\int_{I_n} 
				\tnorm{\boldsymbol{u}_h}_{v}^2 \dif t   
				+ C_1\nu 
				\int_{I_n} 
				\tnorm{\boldsymbol{u}_h}_v^2 \dif t +
				\frac{C_2}{2\epsilon_1}\del[2]{\int_{I_n}  
					\tnorm{\boldsymbol{u}_h}_v^2 \dif t}^2.
			\end{split}
		\end{equation*}
		Choosing $\lambda = 1 /  \Delta t$ and applying the scaling identity in
		\cref{eq:time_scaling} with $p=\infty$, we find there exists a constant $C_4>0$ 
		such that
		\begin{equation*} 
			\begin{split}
				& \del{\frac{C_4^{-1}e^{-1}}{2} -\frac{\epsilon_1}{2}} 
				\norm{u_h}_{L^{\infty}(I_n;L^2(\Omega))}^2  \\
				&\le \frac{1}{2}\norm[1]{u_{n}^-}_{L^2(\Omega)}^2+ 
				\frac{1}{2\epsilon_2}\int_{I_n} \norm{f}_{L^2(\Omega)}^2 \dif t + 
				\frac{C_3\epsilon_2}{2}\int_{I_n} 
				\tnorm{\boldsymbol{u}_h}_{v}^2 \dif t  + 
				C_1\nu 
				\int_{I_n} 
				\tnorm{\boldsymbol{u}_h}_v^2 \dif t +
				\frac{C_2}{2\epsilon_1}\del[2]{\int_{I_n}  
					\tnorm{\boldsymbol{u}_h}_v^2 \dif t}^2.
			\end{split}
		\end{equation*}
		Choosing $\epsilon_1 =C_4^{-1}e^{-1}/2$, $\epsilon_2 =  2\nu$, using
		the a priori estimates on $\boldsymbol{u}_h$ in \Cref{lem:energy_stability},
		and rearranging, we see there exists
		a $C_5> 0$ such that
		\begin{equation*} 
			\begin{split}
				&
				\norm{u_h}_{L^{\infty}(I_n;L^2(\Omega))}^2
				\\
				&\le C_5\del{ \frac{1}{\nu}\int_{0}^{T}\norm{f}_{L^2(\Omega)}^2 
					\dif 
					t \; + \;
					\norm[1]{u_{0}}_{L^2(\Omega)}^2} +
				\frac{C_5}{\nu^2}\del[2]{ 
					\frac{1}{\nu}\int_{0}^{T}\norm{f}_{L^2(\Omega)}^2 
					\dif t \; + \;
					\norm[1]{u_{0}}_{L^2(\Omega)}^2}^2.
			\end{split}
		\end{equation*}
		This bound holds uniformly for every space-time slab, so
		the result follows.
		
	\end{proof}
	
	\subsubsection{Proof of \Cref{thm:uniqueness_HO}.}
	
	
	\begin{proof}
		Consider an arbitrary space-time slab $\mathcal{E}^m$.
		Suppose $(u_1,\bar{u}_1) \in \mathcal{V}^{\text{div}}_h \times 
		\bar{\mathcal{V}}_h $ 
		and $(u_2,\bar{u}_2) \in \mathcal{V}^{\text{div}}_h \times 
		\bar{\mathcal{V}}_h $ are 
		two solutions to~\cref{eq:discrete_problem} corresponding to the same 
		problem data $f$ and $u_0$, and set $\boldsymbol{w}_h = 
		\boldsymbol{u}_1 - \boldsymbol{u}_2$. 
		Then, for all $\boldsymbol{v}_h \in \mathcal{V}_h^{\text{div}} \times \bar{\mathcal{V}}_h$, it holds that
		\begin{equation}\label{eq:highsol1-sol2_a}
			\begin{split}
				-\int_{I_m} (w_h, \partial_t v_h)_{\mathcal{T}_h} \dif t +  
				(w_{m+1}^-,  v_{m+1}^-)_{\mathcal{T}_h}  +
				\int_{I_m} \nu a_h(\boldsymbol{w}_h,\boldsymbol{v}_h) \dif t
				\\ +
				\int_{I_m} \del{o_h(u_1,\boldsymbol{u}_1,\boldsymbol{v}_h)
					- o_h(u_2,\boldsymbol{u}_2,\boldsymbol{v}_h)} \dif t
				= (w_m^-,  v_m^+)_{\mathcal{T}_h}.
			\end{split}
		\end{equation}
		\textbf{\emph{Step one:}}
		Testing \cref{eq:highsol1-sol2_a} with 
		$\boldsymbol{v}_h = \boldsymbol{w}_h$, integrating by parts in time,  
		using the coercivity of $a_h(\cdot,\cdot)$, and noting that $o_h(u_2,\boldsymbol{u}_2,\boldsymbol{w}_h) -
		o_h(u_1,\boldsymbol{u}_1,\boldsymbol{w}_h)	\le 
		-o_h(w_h,\boldsymbol{u}_2,\boldsymbol{w}_h)$ by \cref{eq:ohequals}, we find
		\begin{equation*}
			\begin{split}
				\frac{1}{2} &\norm[1]{w_{m+1}^-}_{L^2(\Omega)}^2  +	
				\frac{1}{2}\norm[1]{ 
					\sbr{w_h}_m}_{L^2(\Omega)}^2 - 
				\frac{1}{2}\norm[1]{w_{m}^-}_{L^2(\Omega)}^2  + \;
				C \nu \int_{I_m} \tnorm{\boldsymbol{w}_h}_v^2 \dif t \le
				\int_{I_m} |o_h(w_h,\boldsymbol{u}_2,\boldsymbol{w}_h)|  \dif t.
			\end{split}
		\end{equation*}
		Summing over all space-time slabs $n = 0, \dots, N-1$, rearranging, and noting that 
		$w_{0}^- = 0$, we see that there exists a $C_1>0$ such that
		\begin{equation} \label{eq:step_one_uniq}
			\begin{split}
				&\norm[1]{w_{N}^-}_{L^2(\Omega)}^2  +	
				\sum_{n=0}^{N-1}\norm[1]{ 
					\sbr{w_h}_n}_{L^2(\Omega)}^2 
				+ \;
				\nu \int_{0}^{T} \tnorm{\boldsymbol{w}_h}_v^2 \dif t \le
				C_1\int_{0}^{T} |o_h(w_h,\boldsymbol{u}_2,\boldsymbol{w}_h)|
				\dif t
				.
			\end{split}
		\end{equation}
		
		\noindent
		\textbf{\emph{Step two:}} Fix an integer $m$ such that $0 \le m  \le N-1$. Testing \cref{eq:highsol1-sol2_a} with the discrete 
		characteristic function $\boldsymbol{v}_h 
		=\boldsymbol{w}_{\chi}$ where $s = \arg \sup_{t \in I_m} 
		\norm[0]{u_h(t)}_{L^2(\Omega)}$, integrating by parts in time, using Young's inequality, we have after rearranging
		\begin{equation*}
			\begin{split}
				& \frac{1}{2}\sup_{t \in I_{m}}\norm[1]{w_h(t) }_{L^2(\Omega)}^2 - 
				\frac{1}{2}\sup_{t 
					\in I_{m-1}}\norm[1]{w_h(t) }_{L^2(\Omega)}^2 \\ &   
				\le - \int_{I_m} \del{\nu 
					a_h(\boldsymbol{w}_h,\boldsymbol{w}_{\chi})  +
					o_h(u_1,\boldsymbol{u}_1,\boldsymbol{w}_{\chi})
					- o_h(u_2,\boldsymbol{u}_2,\boldsymbol{w}_{\chi})} \dif t,
			\end{split}
		\end{equation*}
		where we have used that $\sup_{t \in I_{m-1}} 
		\norm[0]{w(s)}_{L^2(\Omega)} \ge \norm[0]{w_m^-}_{L^2(\Omega)}$.
		Setting $I_{-1} = 
		\cbr{t_0} = 
		\cbr{0}$, we can sum over the space-time slabs $n = 0,\dots,m$ and 
		use the boundedness of $a_h(\cdot,\cdot)$ and the bound
		\cref{eq:bnd_char} to find there exists a constant $C_2>0$ such that
		\begin{equation} \label{eq:uniq_inter}
			\begin{split}
				& \frac{1}{2}\sup_{t \in I_{m}}\norm[1]{w_h(t) }_{L^2(\Omega)}^2    
				\le C_2 \nu\int_{0}^{T} \tnorm{\boldsymbol{w}_h}_v^2 \dif t  + 
				\int_{0}^{T}  
				\left|o_h(u_2,\boldsymbol{u}_2,\boldsymbol{w}_{\chi})
				-o_h(u_1,\boldsymbol{u}_1,\boldsymbol{w}_{\chi}) \right| \dif t,
			\end{split}
		\end{equation}
		where we have used that $\sup_{t 
			\in I_{-1}}\norm[0]{w_h(t) }_{L^2(\Omega)} = \norm[0]{w_0^- 
		}_{L^2(\Omega)} = 0$. This bound holds uniformly for all space-time slabs,
		and thus we can replace the supremum over $I_n$ in \cref{eq:uniq_inter} with
		the supremum over $[0,T]$.
		Doing so, and adding $2\nu\int_{0}^{T} \tnorm{\boldsymbol{w}_h}_v^2 \dif t$
		to both sides we see there exists a constant $C_3>0$ such that
		\begin{equation*}
			\begin{split}
				\frac{1}{2}	\norm{w_h}^2_{L^{\infty}(0,T;L^2(\Omega))}    
				&+   
				2\nu\int_{0}^{T} \tnorm{\boldsymbol{w}_h}_v^2 \dif t \\
				&\le C_3\int_{0}^{T} 
				\left|o_h(w_h,\boldsymbol{u}_2,\boldsymbol{w}_h)  
				\right| \dif 
				t +
				\int_{0}^{T}  
				\left| o_h(w_h,\boldsymbol{u}_2,\boldsymbol{w}_{\chi}) \right| 
				\dif 
				t + \int_0^T \left|
				o_h(u_1,\boldsymbol{w}_h,\boldsymbol{w}_{\chi}) \right| \dif t.
			\end{split}
		\end{equation*}
		Here, we have used the bound
		\cref{eq:step_one_uniq} from step one, that $o_h(u_2,\boldsymbol{u}_2,\boldsymbol{w}_{\chi}) -
		o_h(u_1,\boldsymbol{u}_1,\boldsymbol{w}_{\chi})	= 
		-o_h(w_h,\boldsymbol{u}_2,\boldsymbol{w}_{\chi}) 
		-
		o_h(u_1,\boldsymbol{w}_h,\boldsymbol{w}_{\chi})$, and the triangle inequality.
		From \Cref{lem:o_space_bnd} and two applications of 
		Young's 
		inequality, first with $p=q=2$ and second with $p = 4, q = 4/3$, we 
		find
		\begin{equation*}
			\begin{split}
				|o_h(w_h,\boldsymbol{u}_2,\boldsymbol{w}_h)|
				& \le C_4\del{ \frac{1}{2\epsilon} 
					\norm{w_h}_{L^2(\Omega)}^2\tnorm{\boldsymbol{u}_2}_{v}^2
					+ 
					\frac{1}{4 \epsilon^3} 
					\norm{w_h}_{L^2(\Omega)}^2\norm{u_2}_{L^2(\Omega)}^2
					\tnorm{\boldsymbol{u}_2}_{v}^2 + 
					\frac{5\epsilon}{4}\tnorm{\boldsymbol{w}_h}^{2}_{v}}.
			\end{split}
		\end{equation*}
		Similarly, from \Cref{lem:o_space_bnd} and \cref{eq:bnd_char}, we have
		\begin{equation*}
			\begin{split}
				|o_h(w_h,\boldsymbol{u}_2,\boldsymbol{w}_{\chi})|
				& \le C_5\del{ \frac{1}{2\epsilon} 
					\norm{w_h}_{L^2(\Omega)}^2\tnorm{\boldsymbol{u}_2}_{v}^2
					+ 
					\frac{1}{4 \epsilon^3} 
					\norm{w_h}_{L^2(\Omega)}^2\norm{u_2}_{L^2(\Omega)}^2
					\tnorm{\boldsymbol{u}_2}_{v}^2 + 
					\frac{5\epsilon}{4}\tnorm{\boldsymbol{w}_h}^{2}_{v}},
			\end{split}
		\end{equation*}
		and finally,
		\begin{equation*}
			\begin{split}
				\left|
				o_h(u_1,\boldsymbol{w}_h,\boldsymbol{w}_{\chi}) \right| & \le C_6 \del{ 
					\norm{u_1}_{L^2(\Omega)} \tnorm{\boldsymbol{w}_h}_v^2 + 
					\frac{1}{4\epsilon^3} \norm{u_1}_{L^2(\Omega)}^{2} 
					\norm{w_h}_{L^2(\Omega)}^{2} 
					\tnorm{\boldsymbol{u}_1}_v^{2} + 
					\frac{3\epsilon}{4}\tnorm{\boldsymbol{w}_h}_v^{2}
				},
			\end{split}
		\end{equation*}
		where $\epsilon >0$.
		Collecting the above bounds, choosing $\epsilon = O(\nu)$ sufficiently small and rearranging, we can find a $C_{7} > 0$ 
		such 
		that 
		\begin{equation} \label{eq:uniq_inter2}
			\begin{split}
				&\norm{w_h}^2_{L^{\infty}(0,T;L^2(\Omega))}    
				+   
				\nu\int_{0}^{T} \tnorm{\boldsymbol{w}_h}_v^2 \dif t \\
				&\le 
				\frac{C_{7}}{\nu^4} \del{ \nu^3
					\norm{u_1}_{L^{\infty}(0,T;L^2(\Omega))}  +
					\nu\norm{u_1}_{L^{\infty}(0,T;L^2(\Omega))}^2 
					\int_{0}^T\tnorm{\boldsymbol{u}_1}_{v}^2 \dif t +
					\del{\nu^3
						+ 
						\nu\norm{u_2}_{L^{\infty}(0,T;L^2(\Omega))}^2}\int_{0}^T\tnorm{\boldsymbol{u}_2}_{v}^2
					\dif t} \\& \times\del{\norm{w_h}_{L^{\infty}(0,T;L^2(\Omega))}^2 +
					\nu\int_{0}^{T}\tnorm{\boldsymbol{w}_h}^{2}_{v} \dif t 
				}.
			\end{split}
		\end{equation}
		\\
		
		\noindent
		\textbf{\emph{Step three:}} For notational convenience, let $\Xi = \nu^{-1} \int_{0}^T\norm{f}_{L^2(\Omega)}^2 \dif t + \norm[0]{u_0}_{L^2(\Omega)}^2$.
		Applying the bounds in \Cref{lem:energy_stability} and \Cref{lem:linfty_bnd} to \cref{eq:uniq_inter2}, we find there exists a 
		$C_{8} > 0$ such that 
		\begin{equation*}
			\begin{split}
				&\norm{w_h}^2_{L^{\infty}(0,T;L^2(\Omega))}    
				+   
				\nu\int_{0}^{T} \tnorm{\boldsymbol{w}_h}_v^2 \dif t \\
				&\le 
				\frac{C_{8}}{\nu^4} \del{\nu^{3} \Xi^{1/2} + 
					\nu^2 \Xi   
					+ \Xi^2 + 
					\nu^{-2} \Xi^3   } 
				\del{\norm{w_h}_{L^{\infty}(0,T;L^2(\Omega))}^2 +
					\nu\int_{0}^{T}\tnorm{\boldsymbol{w}_h}^{2}_{v} \dif t 
				}.
			\end{split}
		\end{equation*}
		The result follows if $\nu < 1$ and $\Xi \le \frac{1}{4}\min 
		\cbr[0]{C_{8}^{-1/3},C_{8}^{-1/2}, C_{8}^{-1},C_{8}^{-2}} 
		\nu^2$.
	\end{proof}

	\subsection{Recovering the pressure}
	
	Existence of the pressure pair $(p_h, \bar{p}_h) \in 
	\boldsymbol{\mathcal{Q}}_h$ satisfying
	\cref{eq:discrete_problem} will require the 
	following inf-sup 
	condition:
	\begin{thm}[Inf-sup condition] \label{thm:inf_sup}
		Suppose that the spatial mesh $\mathcal{T}_h$ is conforming and quasi-uniform.
		There exists a constant $\beta > 0$, independent of $h$ and $\Delta t$,
		such that for all $\boldsymbol{q}_h \in \boldsymbol{\mathcal{Q}}_h$,
		\begin{equation} \label{eq:inf_sup}
			\sup_{0 \ne \boldsymbol{v}_h \in \boldsymbol{\mathcal{V}}_h}  \frac{\int_{I_n} 
				b_h(\boldsymbol{q}_h,v_h) \dif t}{\del{\int_{I_n} 
					\tnorm{\boldsymbol{v}_h}_{v}^2 
					\dif t}^{1/2}} \ge \beta \del{\int_{I_n} 
				\tnorm{\boldsymbol{q}_h}_p^2 \dif t}^{1/2}.
		\end{equation}
	\end{thm}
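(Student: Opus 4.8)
The plan is to reduce \cref{eq:inf_sup} to the purely spatial HDG inf-sup condition for the Stokes pair $(\boldsymbol{V}_h,\boldsymbol{Q}_h)$, which is available in the literature \cite{Rhebergen:2018a,Rhebergen:2017}: there is a constant $\beta_s>0$, independent of $h$, such that for all $\boldsymbol{q}_h\in\boldsymbol{Q}_h$ one has $\sup_{0\ne\boldsymbol{v}_h\in\boldsymbol{V}_h} b_h(\boldsymbol{q}_h,v_h)/\tnorm{\boldsymbol{v}_h}_v \ge \beta_s\tnorm{\boldsymbol{q}_h}_p$. I will also use that $b_h$ is continuous on $\boldsymbol{Q}_h\times\boldsymbol{V}_h$ with an $h$-independent constant: after subtracting the identically vanishing term $\sum_{K\in\mathcal{T}_h}\int_{\partial K}\bar q_h\,\bar v_h\cdot n\dif s$ (which vanishes by single-valuedness of $\bar q_h,\bar v_h$ on $\mathcal{F}_h$ and $\bar v_h|_{\partial\Omega}=0$) and applying Cauchy--Schwarz together with $\norm{(v_h-\bar v_h)\cdot n}_{L^2(\partial K)}\le\norm{v_h-\bar v_h}_{L^2(\partial K)}$ and $\norm{\nabla\cdot v_h}_{L^2(K)}\le\sqrt{d}\,\norm{\nabla v_h}_{L^2(K)}$, one obtains $|b_h(\boldsymbol{q}_h,v_h)|\le C_b\tnorm{\boldsymbol{q}_h}_p\tnorm{\boldsymbol{v}_h}_v$.

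The device that transfers the spatial estimate to space-time \emph{without destroying the polynomial-in-time structure} is a \emph{linear} supremizer. On the finite-dimensional space $\boldsymbol{V}_h$, $\tnorm{\cdot}_v$ is induced by the inner product $(\boldsymbol{u},\boldsymbol{v})_v:=\sum_{K\in\mathcal{T}_h}(\nabla u,\nabla v)_K+\sum_{K\in\mathcal{T}_h}h_K^{-1}(\bar u-u,\bar v-v)_{\partial K}$, whose positive-definiteness follows from the connectedness of $\mathcal{T}_h$ and the homogeneous facet condition on $\partial\Omega$. For fixed $\boldsymbol{q}_h\in\boldsymbol{Q}_h$, the map $\boldsymbol{w}_h\mapsto b_h(\boldsymbol{q}_h,w_h)$ is a bounded linear functional on $(\boldsymbol{V}_h,(\cdot,\cdot)_v)$, so the Riesz representation theorem produces a unique $\mathcal{S}\boldsymbol{q}_h\in\boldsymbol{V}_h$ with $(\mathcal{S}\boldsymbol{q}_h,\boldsymbol{w}_h)_v=b_h(\boldsymbol{q}_h,w_h)$ for all $\boldsymbol{w}_h\in\boldsymbol{V}_h$, and $\boldsymbol{q}_h\mapsto\mathcal{S}\boldsymbol{q}_h$ is linear. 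Since the supremum appearing in the spatial inf-sup condition equals the operator norm of this functional, namely $\tnorm{\mathcal{S}\boldsymbol{q}_h}_v$, combining the spatial inf-sup with the continuity of $b_h$ gives $\beta_s\tnorm{\boldsymbol{q}_h}_p\le\tnorm{\mathcal{S}\boldsymbol{q}_h}_v\le C_b\tnorm{\boldsymbol{q}_h}_p$; moreover, writing $\mathcal{S}\boldsymbol{q}_h=(w_h,\bar w_h)$ we have $b_h(\boldsymbol{q}_h,w_h)=\tnorm{\mathcal{S}\boldsymbol{q}_h}_v^2$.

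To conclude, fix $n$ and $\boldsymbol{q}_h\in\boldsymbol{\mathcal{Q}}_h$; we may assume $\boldsymbol{q}_h\not\equiv 0$ on $I_n$, as otherwise \cref{eq:inf_sup} is trivial. Define $\boldsymbol{v}_h(t):=\mathcal{S}(\boldsymbol{q}_h(t))$ for $t\in I_n$ and $\boldsymbol{v}_h:=0$ on the remaining slabs. Because $\mathcal{S}$ is linear it maps $P_{k_t}(I_n;\boldsymbol{Q}_h)$ into $P_{k_t}(I_n;\boldsymbol{V}_h)$, hence $\boldsymbol{v}_h\in\boldsymbol{\mathcal{V}}_h$ and $\boldsymbol{v}_h\not\equiv 0$. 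Integrating the pointwise identities and bounds of the previous paragraph over $I_n$ yields $\int_{I_n}b_h(\boldsymbol{q}_h,v_h)\dif t=\int_{I_n}\tnorm{\boldsymbol{v}_h}_v^2\dif t\ge\beta_s^2\int_{I_n}\tnorm{\boldsymbol{q}_h}_p^2\dif t$ and $\int_{I_n}\tnorm{\boldsymbol{v}_h}_v^2\dif t\le C_b^2\int_{I_n}\tnorm{\boldsymbol{q}_h}_p^2\dif t$. Dividing the first estimate by $(\int_{I_n}\tnorm{\boldsymbol{v}_h}_v^2\dif t)^{1/2}$ and inserting the second produces \cref{eq:inf_sup} with $\beta=\beta_s^2/C_b$, which is independent of $h$ and $\Delta t$.

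The main obstacle is precisely the point stressed in the second paragraph: a supremizer chosen pointwise in $t$ by the spatial inf-sup need not be polynomial in $t$, hence need not lie in $\boldsymbol{\mathcal{V}}_h$; using the \emph{linear} supremizer $\mathcal{S}$ circumvents this, and once linearity is secured the $h$- and $\Delta t$-independence of $\beta$ is automatic, since the argument involves only the $h$-independent constants $\beta_s$, $C_b$ and no quadrature in time. This is the same mechanism exploited in \cite{Chrysafinos:2010} for conforming elements. The only remaining bookkeeping is the $h$-uniform continuity of $b_h$ and the positive-definiteness of $(\cdot,\cdot)_v$ on $\boldsymbol{V}_h$, both routine.
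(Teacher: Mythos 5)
Your proof is correct, but it follows a genuinely different route from the paper. The paper never invokes a spatial inf-sup condition as a black box: it splits $b_h$ into an element part $b_1(q_h,v_h)=-\sum_K\int_K q_h\nabla\cdot v_h\dif x$ and a facet part $b_2(\bar q_h,v_h)=\sum_K\int_{\partial K}v_h\cdot n\,\bar q_h\dif s$, applies the Howell--Walkington criterion to combine two separate inf-sup estimates, and builds explicit space-time supremizers by expanding $q_h$ and $\bar q_h$ in a temporal orthonormal basis and applying, coefficient by coefficient, the BDM projection of a divergence right-inverse $z_i$ (for $b_1$) and the local BDM lifting of the facet pressure (for $b_2$); quasi-uniformity enters there through $\alpha_2=Ch_{\min}/h_{\max}$. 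You instead take the purely spatial HDG inf-sup in the pair $(\tnorm{\cdot}_v,\tnorm{\cdot}_p)$ as known (it is indeed in the literature --- the paper's reference for this is \cite[Lemma 1]{Rhebergen:2018b} rather than \cite{Rhebergen:2018a}, and quasi-uniformity is exactly the hypothesis of that spatial result), verify the $h$-uniform continuity $|b_h(\boldsymbol{q}_h,v_h)|\le C_b\tnorm{\boldsymbol{q}_h}_p\tnorm{\boldsymbol{v}_h}_v$ (your facet manipulation and the weighting by $h_K^{\pm 1/2}$ are right), and transfer to space-time via the Riesz supremizer $\mathcal{S}$, whose \emph{linearity} guarantees $t\mapsto\mathcal{S}(\boldsymbol{q}_h(t))$ stays in $P_k(I_n;\boldsymbol{V}_h)$ by the tensor-product structure, yielding $\beta=\beta_s^2/C_b$ independent of $h$ and $\Delta t$. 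Both arguments hinge on the same underlying obstruction you identify --- a pointwise-in-time supremizer must depend linearly (or at least polynomially-compatibly) on the pressure to remain in $\boldsymbol{\mathcal{V}}_h$; the paper secures this through explicit linear constructions ($\Pi_{\text{BDM}}$, $\bar\Pi_V$, $L^{\text{BDM}}$) applied to temporal expansion coefficients, whereas you secure it abstractly. Your version is shorter and modular (any $h$-uniform spatial inf-sup for a form without time derivatives lifts immediately to the tensor-product space-time spaces), at the cost of delegating the hard spatial work to the cited lemma; the paper's construction is longer but self-contained in the space-time setting and makes the dependence of the constant on the mesh (hence the quasi-uniformity assumption) explicit.
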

	The proof, which exploits the tensor-product structure of the finite element 
	spaces in an essential way, is an extension of the proof of \cite[Lemma 
	1]{Rhebergen:2018b} to 
	the space-time setting. 
	\subsubsection{Proof of \Cref{thm:inf_sup}.}
	
	By \cite[Theorem 3.1]{Howell:2011}, the inf-sup condition 
	\cref{eq:inf_sup}
	is satisfied if we can decompose $b_h(\cdot,\cdot)$ into $b_1(\cdot,\cdot): 
	V_h \times 
	Q_h \to \mathbb{R}$ and $b_2(\cdot,\cdot): V_h \times 
	\bar{Q}_h \to 
	\mathbb{R}$ such that, for some constants
	$\alpha_1,\alpha_2 
	> 0$, it holds that
	\begin{subequations}\label{eq:_inf_sup_b1_b2}
		\begin{align}  \label{eq:_inf_sup_b1}
			\sup_{(v_h,\bar{v}_h) \in \mathcal{Z}_{b_2} \times \bar{\mathcal{V}}_h} \frac{\int_{I_n} 
				b_1(q_h,v_h)\dif 
				t}{\del{\int_{I_n}\tnorm{\boldsymbol{v}_h}_v^2 \dif t}^{1/2}} &\ge 
			\alpha_1 \del[2]{\int_{I_n} 
				\norm{q_h}_{L^2(\Omega)}^2 \dif t}^{1/2}, \\
			&\text{and} \notag \\  \label{eq:_inf_sup_b2}
			\sup_{\boldsymbol{v}_h \in \boldsymbol{\mathcal{V}}_h} \frac{\int_{I_n} 
				b_2(\bar{q}_h,v_h)\dif 
				t}{\del{\int_{I_n}\tnorm{\boldsymbol{v}_h}_v^2 \dif t}^{1/2}}& \ge 
			\alpha_2 
			\del[2]{\sum_{K\in\mathcal{T}_h}\int_{I_n} h_K\norm{\bar{q}}_{\partial K}^2 
				\dif t}^{1/2},
		\end{align}
	\end{subequations}
	where
	\begin{equation*}
		\mathcal{Z}_{b_2} = \cbr[2]{v_h \in \mathcal{V}_h 
			\; : \; 
			\int_{I_n} 
			b_2(v_h,\bar{q}_h)\dif t 
			= 0, 
			\quad \forall \bar{q}_h \in \bar{\mathcal{Q}}_h}.
	\end{equation*}
	We thus define
	\begin{equation*}
		b_1(q_h,v_h) = - \sum_{K\in\mathcal{T}_h}\int_{K}q_h \nabla \cdot v_h \dif x
		\quad \text{and} \quad b_2(\bar{q}_h,v_h) =  
		\sum_{K\in\mathcal{T}_h}\int_{\partial 
			K}v_h \cdot n \bar{q}_h \dif s.
	\end{equation*}
	We begin by proving \cref{eq:_inf_sup_b1}. The tensor-product structure
	of the space $\mathcal{Q}_h$ ensures that we can expand any $q_h \in \mathcal{Q}_h$ in terms
	of an orthonormal basis of $P_k(I_n)$ with respect
	to the $L^2(I_n)$ inner product:
	\begin{equation} \label{eq:qh_expansion}
		q_h = \sum_{i = 0}^k \phi_i(t) q_i(x), \quad \phi_i \in P_{k}(I_n), \; q_i \in Q_h.
	\end{equation}
	Since each $q_i \in 
	L_0^2(\Omega)$, there exist $z_i 
	\in H_0^1(\Omega)^d$, $0 \le i \le k$ and
	constants $\beta_i$, $0 \le i \le k$  such that
	$\nabla \cdot z_i = -q_i$ and $\beta_i \norm{z_i}_{H^1(\Omega)} \le  
	\norm{q_i}_{L^2(\Omega)}$ (see e.g. \cite[Theorem 6.5]{Pietro:book}). We construct the desired $\boldsymbol{\psi}_h = (\psi_h,\bar{\psi}_h) \in  \mathcal{Z}_{b_2} \times \bar{\mathcal{V}}_h$ 
	by choosing
	\begin{equation*}
		\psi_h = \sum_{i=0}^k \phi_i(t) \Pi_{\text{BDM}} z_i, \quad \text{ and } 
		\quad \bar{\psi}_h = \sum_{i=0}^k \phi_i(t) \bar{\Pi}_V z_i,
	\end{equation*}
	where $\Pi_{\text{BDM}}:\sbr{H^1(\Omega)}^d \to V_h$ is the BDM projection \cite{Boffi:book} and $\bar{\Pi}_V$ is the $L^2$ 
	projection onto the
	space $\bar{V}_h$.
	By the orthonormality of the basis $\cbr{\phi_i}_{i=0}^k$, definition
	of the BDM projection \cite{Boffi:book}, the single-valuedness of
	$z_i \cdot n$ and $\bar{q}_i$ across element faces, and the fact that
	$z_i \in H_0^1(\Omega)$, we have
	\begin{equation*}
		b_2(\bar{q}_h,\psi_h) = \sum_{K\in \mathcal{T}_h} \sum_{i=0}^k 
		\int_{\partial 
			K} z_i  \cdot n 
		\bar{q}_i \dif s = 0,
	\end{equation*}
	and thus $\psi_h \in \mathcal{Z}_{b_2}$.
	We now show that $\boldsymbol{\psi}_h$ satisfies the inequality in \cref{eq:_inf_sup_b1} with
	some $\alpha_1>0$ independent of the mesh parameters $h$ and $\Delta t$.
	Given $q_h \in \mathcal{Q}_h$, we can use the expansion \cref{eq:qh_expansion}, the definition
	of $z_i$, $0 \le i \le k$, and the commuting diagram 
	property of the BDM projection \cite{Boffi:book} to find
	\begin{equation}\label{eq:inf_sup_inter_2}
		\begin{split}
			\int_{I_n} \norm{q_h}_{L^2(\Omega)}^2\dif t 
			&=  \int_{I_n} b_1(q_h,\psi_h)\dif t.
		\end{split}
	\end{equation}
	Next, we need to show existence of a constant $\alpha_1> 0$, independent of the
	mesh parameters $h$ and
	$\Delta t$, such that
	\begin{equation} \label{eq:inf_sup_norm_bnd}
		\alpha_1^2 \int_{I_n} \tnorm{\boldsymbol{\psi}_h}_{v}^2 \dif t \le \int_{I_n} 
		\norm{q_h}_{L^2(\Omega)}^2\dif t.
	\end{equation}
	But, this can easily be reduced to the proof of \cite[Lemma 4.5]{Rhebergen:2017} by
	expanding $\psi_h$ in terms of an orthonormal basis of
	$P_k(I_n)$ with respect to the $L^2(I_n)$ inner-product.
	Combining  \cref{eq:inf_sup_inter_2} and \cref{eq:inf_sup_norm_bnd}, we have
	\begin{equation*}
		\frac{\int_{I_n} b_1(q_h,\psi_h) \dif t}{\del{\int_{I_n} 
				\tnorm{\boldsymbol{\psi}_h}_{v}^2  \dif t}^{1/2}} = 
		\frac{\int_{I_n} \norm{q_h}_{L^2(\Omega)}^2 \dif 
			t}{\del{\int_{I_n} \tnorm{\boldsymbol{\psi}_h}_{v}^2\dif t}^{1/2}} \ge 
		\alpha_1^2 \del[2]{\int_{I_n} 
			\norm{q_h}_{L^2(\Omega)}^2 \dif t}^{1/2},
	\end{equation*}
	where $\alpha_1 > 0$ depends on the constants $\beta_i$, $0\le i \le k$.
	
	What we have left to show is \cref{eq:_inf_sup_b2}.
	It suffices to construct an $\boldsymbol{\omega}_h \in 
	\boldsymbol{\mathcal{V}}_h$ such that for some $\alpha_2 > 0$ it holds that
	\begin{equation} \label{eq:infsup_inter2}
		\frac{\int_{I_n} b_2(\bar{q}_h,\omega_h)\dif 
			t}{\del{\int_{I_n}\tnorm{\boldsymbol{\omega}_h}_v^2 \dif t}^{1/2}} \ge 
		\alpha_2
		\del[2]{\sum_{K\in\mathcal{T}_h}\int_{I_n} h_K\norm{\bar{q}_h}_{\partial 
				K}^2 
			\dif t}^{1/2}, \quad \forall \bar{q}_h \in \bar{\mathcal{Q}}_h.
	\end{equation}
	The tensor-product structure
	of $\bar{\mathcal{Q}}_h$ ensures that we can expand any $\bar{q}_h \in \bar{\mathcal{Q}}_h$ in terms
	of an orthonormal basis of $P_k(I_n)$ with respect
	to the $L^2(I_n)$ inner-product:
	\begin{equation} \label{eq:qbar_expansion}
		\bar{q}_h = \sum_{i = 0}^k \phi_i(t) \bar{q}_i(x), \quad \phi_i \in P_{k}(I_n), \; \bar{q}_i \in \bar{Q}_h.
	\end{equation}
	Given $\bar{q}_h \in \bar{\mathcal{Q}}_h$, we construct the required $\boldsymbol{\omega}_h$ by choosing $\bar{\omega}_h = 0$, 
	and defining $\omega_h \in \mathcal{V}_h$ element-wise by:
	\begin{equation*}
		\omega_h|_{K\times I_n} = \sum_{i=0}^k \phi_i(t) L^{\text{BDM}} \del{ 
			\bar{q}_i|_{\partial K}},
	\end{equation*}
	with $\bar{q}_i \in \bar{Q}_h$ defined as in \cref{eq:qbar_expansion}. Here, $L^{\text{BDM}}: P_k(\partial K) \to 
	\del{P_k(K)}^d$ is the local BDM lifting satisfying for all  $\bar{q}_h \in 
	P_k(\partial K)$ (see e.g. \cite[Proposition 
	2.10]{Sayas:book}):
	\begin{equation} \label{eq:bdm_lifting_properties}
		(L^{\text{BDM}} \bar{q}_h) \cdot n = \bar{q}_h, \quad \text{ and } \quad 
		\norm[1]{L^{\text{BDM} 
			}\bar{q}_h}_{L^2(K)} \le Ch_K^{1/2} \norm{\bar{q}}_{L^2(\partial K)}, 
		\quad \forall \bar{q}_h \in 
		P_k(\partial K),
	\end{equation}
	where $n$ is the unit outward normal to $\partial K$.
	Using the first property in \cref{eq:bdm_lifting_properties}, it can be shown that 
	\begin{equation} \label{eq:inf_sup_3_inter}
		\int_{I_n} b_2(\bar{q}_h,\omega_h)\dif t = \sum_{K \in \mathcal{T}_h} 
		\int_{I_n} 
		\norm{\bar{q}_h}_{L^2(\partial K)}^2 \dif t.
	\end{equation}
	The remainder of the proof of \cref{eq:_inf_sup_b2} can easily be reduced to the proof of \cite[Lemma 3]{Rhebergen:2018b} 
	by expanding $\omega_h$ in terms of an orthonormal basis of
	$P_k(I_n)$ with respect to the $L^2(I_n)$ inner-product. In particular,
	it can be shown that $\alpha_2 = C h_{\min}/h_{\max}$, which remains uniformly bounded below
	provided we assume quasi-uniformity of $\mathcal{T}_h$.
	\qed
	\\
	
	\noindent
	The Ladyzhenskaya-Babu\v{s}ka-Brezzi theorem \cite{Boffi:book} yields the following 
	corollary:
	\begin{cor}
		To each discrete velocity solution pair $(u_h,\bar{u}_h) \in \boldsymbol{\mathcal{V}}_h$ 
		guaranteed by \Cref{thm:existence}, 
		there exists a unique discrete pressure pair $(p_h,\bar{p}_h) \in 
		\boldsymbol{\mathcal{Q}}_h$ 
		satisfying 
		\cref{eq:discrete_problem}.
	\end{cor}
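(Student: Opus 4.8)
The plan is to argue one space-time slab at a time and reduce the claim to the abstract Babu\v{s}ka--Brezzi theorem, with the inf-sup bound of \Cref{thm:inf_sup} supplying the crucial hypothesis. Fix $n \in \cbr{0,\dots,N-1}$, let $(u_h,\bar u_h) \in \mathcal{V}_h^{\text{div}} \times \bar{\mathcal{V}}_h$ be the velocity solution on $\mathcal{E}^n$ guaranteed by \Cref{thm:existence}, and treat the trace $u_n^-$ (inherited from the previous slab, or equal to $P_h u_0$ when $n=0$) as fixed data. Introduce the linear functional $\ell_n : \boldsymbol{\mathcal{V}}_h \to \mathbb{R}$ defined by
\begin{multline*}
\ell_n(\boldsymbol{v}_h) := (u_n^-, v_n^+)_{\mathcal{T}_h} + \int_{I_n}(f, v_h)_{\mathcal{T}_h}\,\dif t + \int_{I_n}(u_h, \partial_t v_h)_{\mathcal{T}_h}\,\dif t \\
- (u_{n+1}^-, v_{n+1}^-)_{\mathcal{T}_h} - \int_{I_n}\del{\nu a_h(\boldsymbol{u}_h, \boldsymbol{v}_h) + o_h(u_h; \boldsymbol{u}_h, \boldsymbol{v}_h)}\,\dif t,
\end{multline*}
so that exhibiting a pressure pair satisfying \cref{eq:discrete_problem} on $\mathcal{E}^n$ is the same as finding $\boldsymbol{p}_h \in \boldsymbol{\mathcal{Q}}_h$ with $\int_{I_n} b_h(\boldsymbol{p}_h, v_h)\,\dif t = \ell_n(\boldsymbol{v}_h)$ for every $\boldsymbol{v}_h \in \boldsymbol{\mathcal{V}}_h$.

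The key point is that $\ell_n$ annihilates the kernel of the divergence pairing. Since the velocity equation in \cref{eq:discrete_problem} already holds against all test functions in $\mathcal{V}_h^{\text{div}} \times \bar{\mathcal{V}}_h$, we have $\ell_n(\boldsymbol{v}_h) = 0$ for every such $\boldsymbol{v}_h$; taking $v_h = 0$ in \cref{eq:discrete_problem} moreover shows $\ell_n(0,\bar v_h) = 0$, so both $\ell_n$ and $\int_{I_n} b_h(\boldsymbol{q}_h, \cdot)\,\dif t$ may be regarded as functionals on $\mathcal{V}_h$ alone. By \Cref{lem:kernel_equiv} the right kernel of this pairing is exactly $\mathcal{V}_h^{\text{div}}$, and $\ell_n$ vanishes on it.

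It then remains to invoke the finite-dimensional Babu\v{s}ka--Brezzi theorem: the slab-uniform inf-sup inequality of \Cref{thm:inf_sup}, phrased with the norms $\del{\int_{I_n}\tnorm{\cdot}_v^2\,\dif t}^{1/2}$ and $\del{\int_{I_n}\tnorm{\cdot}_p^2\,\dif t}^{1/2}$, forces the map $\boldsymbol{q}_h \mapsto \int_{I_n} b_h(\boldsymbol{q}_h, \cdot)\,\dif t$ to be injective with range equal to the annihilator of $\mathcal{V}_h^{\text{div}}$ in $\mathcal{V}_h^*$. As $\ell_n$ lies in that annihilator, there is a unique $\boldsymbol{p}_h = (p_h,\bar p_h) \in \boldsymbol{\mathcal{Q}}_h$ with $\int_{I_n} b_h(\boldsymbol{p}_h, v_h)\,\dif t = \ell_n(\boldsymbol{v}_h)$, and together with the velocity equation this is precisely \cref{eq:discrete_problem} on $\mathcal{E}^n$; uniqueness of $\boldsymbol{p}_h$ is immediate from the injectivity. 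Marching through $n = 0,\dots,N-1$ --- at each slab the relevant $u_n^-$ is already fixed, either as $P_h u_0$ or as the previous slab's terminal trace --- assembles the global pressure pair. There is no serious obstacle here; the only care needed is to match the time-integrated statement of \Cref{thm:inf_sup} to the abstract hypotheses and to confirm that $\ell_n$ annihilates the \emph{full} discretely divergence-free test space and not merely a subspace of it, both of which follow from \Cref{lem:kernel_equiv} and the structure of \cref{eq:discrete_problem}.
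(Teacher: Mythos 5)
Your argument is correct and is essentially the paper's own proof: the paper simply invokes the Ladyzhenskaya--Babu\v{s}ka--Brezzi theorem with the inf-sup condition of \Cref{thm:inf_sup}, and your slab-by-slab construction of the residual functional $\ell_n$, its vanishing on $\mathcal{V}_h^{\text{div}}\times\bar{\mathcal{V}}_h$, and the resulting unique pressure pair is exactly the standard argument that citation encapsulates. No gaps; you have merely written out the details the paper leaves implicit.
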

	\section{Error analysis for the velocity}
	\label{sec:vel_err}
	
	\subsection{Space-time projection operators}
	Let $ P_h : L^2(\Omega)^d\to V_h^{\text{div}}$ and
	$ \bar{P}_h : L^2(\Gamma)^d \to \bar{V}_h$ denote the orthogonal
	$L^2$-projections onto, respectively, the spaces $V_h^{\text{div}}$
	and $\bar{V}_h$. The approximation properties of $\bar{P}_h$ are
	well-known while the approximation properties of $P_h$ rely critically
	on the fact that $V_h^{\text{div}} \subset H$. In particular, we can
	exploit the best approximation property of the orthogonal projection
	along with the approximation properties of the BDM projection to
	prove (see e.g.  \cite{Rhebergen:2018c}):
	\begin{lem} \label{lem:approx_prop_spat_proj}
		Let $k\ge 1$, $0 \le m \le 2$, and $u \in H^{k+1}(\Omega)^d$. If the spatial mesh $\mathcal{T}_h$ 
		is quasi-uniform and
		consists 
		triangles in two dimensions or tetrahedras in three dimensions, then the following
		estimates hold:
		\begin{align}\label{eq:spat_proj_el}
			\sum_{K \in \mathcal{T}_h} \norm{u - P_h u}^2_{H^m(K)} &\lesssim 
			h^{2(k-m+ 1)}
			|u|_{H^{k+1}(\Omega)}^2,
			\\ \label{eq:spat_proj_face}
			\sum_{K \in \mathcal{T}_h} h_K^{-1}\norm{u - P_h u}^2_{L^2(\partial K)} &\lesssim h^{2k}
			|u|_{H^{k+1}(\Omega)}^2.
		\end{align}
	\end{lem}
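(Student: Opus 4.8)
The plan is to derive all three estimates from a single $V_h^{\text{div}}$-conforming comparison element, the BDM interpolant $\Pi_{\text{BDM}} u$, combined with the best-approximation property of the orthogonal projection $P_h$. For $u \in H^{k+1}(\Omega)^d$ I would first record the two properties of $\Pi_{\text{BDM}} u$ that drive everything. The approximation property is local and standard on simplices: $\sum_{K \in \mathcal{T}_h}\norm{u - \Pi_{\text{BDM}} u}_{H^m(K)}^2 \lesssim h^{2(k+1-m)} |u|_{H^{k+1}(\Omega)}^2$ for $0 \le m \le k+1$, and $\Pi_{\text{BDM}} u|_K \in P_k(K)^d$ on each $K$. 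The membership property, $\Pi_{\text{BDM}} u \in V_h^{\text{div}}$, is the crux: by the commuting-diagram property the divergence of $\Pi_{\text{BDM}} u$ is the $L^2(\Omega)$-projection of $\nabla \cdot u$ onto the piecewise polynomials of degree $k-1$, its normal trace is single-valued across interior faces, and on $\partial\Omega$ its normal trace is the projection of $u\cdot n$; evaluating $b_h(\boldsymbol{q}_h,\Pi_{\text{BDM}} u)$ from \cref{eq:formB} then shows $b_h(\boldsymbol{q}_h,\Pi_{\text{BDM}} u)=0$ for all $\boldsymbol{q}_h \in \boldsymbol{Q}_h$, so that $\Pi_{\text{BDM}} u \in V_h^{\text{div}}$. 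It is precisely this membership that lets the constrained projection $P_h$ inherit the optimal unconstrained BDM rates.

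With these facts in hand the element estimate \cref{eq:spat_proj_el} follows quickly. For $m=0$, since $P_h u$ is the $L^2(\Omega)$-best approximation of $u$ in $V_h^{\text{div}}$ and $\Pi_{\text{BDM}} u \in V_h^{\text{div}}$,
\[
\norm{u - P_h u}_{L^2(\Omega)} \le \norm{u - \Pi_{\text{BDM}} u}_{L^2(\Omega)} \lesssim h^{k+1} |u|_{H^{k+1}(\Omega)} .
\]
For $m \in \cbr{1,2}$ I would insert $\Pi_{\text{BDM}} u$ by the triangle inequality: the term $\norm{u - \Pi_{\text{BDM}} u}_{H^m(K)}$ is controlled directly by the approximation property, while $\Pi_{\text{BDM}} u - P_h u \in P_k(K)^d$ admits the inverse inequality $\norm{\Pi_{\text{BDM}} u - P_h u}_{H^m(K)} \lesssim h_K^{-m}\norm{\Pi_{\text{BDM}} u - P_h u}_{L^2(K)}$ (valid by quasi-uniformity). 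Summing, a further triangle inequality together with the $m=0$ bound gives $\sum_K \norm{\Pi_{\text{BDM}} u - P_h u}_{L^2(K)}^2 \lesssim h^{2(k+1)}|u|_{H^{k+1}(\Omega)}^2$, and combining the two contributions yields $\sum_K \norm{u - P_h u}_{H^m(K)}^2 \lesssim h^{2(k-m+1)}|u|_{H^{k+1}(\Omega)}^2$.

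For the face estimate \cref{eq:spat_proj_face} I would split $u - P_h u = (u - \Pi_{\text{BDM}} u) + (\Pi_{\text{BDM}} u - P_h u)$ on each $\partial K$. For the smooth part I would apply the multiplicative trace inequality $\norm{w}_{L^2(\partial K)}^2 \lesssim h_K^{-1}\norm{w}_{L^2(K)}^2 + h_K |w|_{H^1(K)}^2$ with $w = u - \Pi_{\text{BDM}} u$ and the approximation property, so that $\sum_K h_K^{-1}\norm{w}_{L^2(\partial K)}^2 \lesssim h^{2k}|u|_{H^{k+1}(\Omega)}^2$. For the polynomial part I would use the discrete trace inequality $\norm{p}_{L^2(\partial K)} \lesssim h_K^{-1/2}\norm{p}_{L^2(K)}$ \cite[Lemma 1.46]{Pietro:book} with $p = \Pi_{\text{BDM}} u - P_h u$, together with the $L^2(K)$ bound established above; quasi-uniformity again gives a contribution $\lesssim h^{2k}|u|_{H^{k+1}(\Omega)}^2$.

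The main obstacle is establishing that $\Pi_{\text{BDM}} u \in V_h^{\text{div}}$, i.e.\ verifying $b_h(\boldsymbol{q}_h,\Pi_{\text{BDM}} u)=0$ through the commuting-diagram and single-valued normal-trace properties of the BDM interpolant; this is what allows the $V_h^{\text{div}}$-constrained best approximation to achieve the unconstrained rates. Once this membership is secured, the remaining arguments are a routine interplay of the triangle, inverse, and trace inequalities, all of which rely on the quasi-uniformity of $\mathcal{T}_h$.
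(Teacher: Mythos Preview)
Your proof is correct and follows essentially the same approach as the paper: best approximation via $\Pi_{\text{BDM}} u \in V_h^{\text{div}}$ for $m=0$, then triangle and inverse inequalities for $m \ge 1$, and a trace inequality combined with the element estimates for \cref{eq:spat_proj_face}. The only cosmetic differences are that the paper inserts the unconstrained $L^2$-projection $\Pi_V u$ (rather than $\Pi_{\text{BDM}} u$) as the intermediate polynomial for $m\ge 1$, and applies the $H^1$ trace inequality directly to $u-P_h u$ for the face bound instead of splitting first.
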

	\begin{proof}
		
		We begin by proving \cref{eq:spat_proj_el}. For $m=0$, we have by the best 
		approximation property of the orthogonal $L^2$-projection onto 
		$V_h^{\text{div}}$:
		\begin{equation*} 
			\norm{u - P_h u }_{L^2(\Omega)} = \min_{v_h \in V_h^{\text{div}} } \norm{ u 
				- v_h}_{L^2(\Omega)}.
		\end{equation*}
		Since $\Pi_{\text{BDM}} u \in V_h^{\text{div}}$,
		\cref{eq:spat_proj_el} follows from standard approximation properties
		of the BDM projection \cite{Boffi:book}. The proof for $m = 1$ follows
		by noting that, by triangle inequality,
		\begin{equation*}
			\norm{u - P_h u}_{H^m(K)} \le \norm{u - \Pi_V u}_{H^m(K)} + \norm{\Pi_V u - P_h u}_{H^m(K)},
		\end{equation*}
		where $\Pi_V$ is the orthogonal $L^2$-projection onto $V_h$.
		Using the local inverse inequality $\norm{u_h}_{H^1(K)} \le 
		Ch_K^{-1} \norm{u_h}_{L^2(K)}$, the quasi-uniformity of the spatial mesh 
		$\mathcal{T}_h$, \cref{eq:spat_proj_el}, and the approximation properties of $\Pi_V$
		(see e.g. \cite[Lemma 
		1.58]{Pietro:book}), the result follows. The bound for $m=2$
		follows similarly. To prove \cref{eq:spat_proj_face}, we note that by the local trace inequality 
		for functions in $H^1(K)$, we 
		have
		\begin{equation*}
			\norm{u - P_h u}_{L^2(\partial K)} \le C \del{h_K^{-1/2}\norm{u - P_h u}_{L^2(K)} + \norm{u - P_h 
					u}_{L^2(K)}^{1/2} |u - P_h u|_{H^1(K)}^{1/2}}.
		\end{equation*}
		The result now follows from the quasi-uniformity of the mesh, the 
		Cauchy--Schwarz inequality, and \cref{eq:spat_proj_el}.
	\end{proof}

	Following \cite[Definition 
	4.2]{Chrysafinos:2010}, we introduce a space-time projection operator much in 
	the same spirit as 
	the temporal ``DG-projection" defined in \cite[Eq. (12.9)]{Thomee:book} or 
	\cite[Section 6.1.4]{Dolejsi:book}, but appropriately modified for divergence 
	free fields. Additionally, we will need an analogue of this temporal DG-projection
	onto the facet space $\bar{\mathcal{V}}_h$:
	\begin{defn} \label{def:space_time_kernel_proj}
		\begin{enumerate}
			\item $\mathcal{P}_h: C(I_n;L^2(\Omega)) \to 
			\mathcal{V}_h$ 
			satisfying $(\mathcal{P}_h u)(t_{n+1}^-) = (P_h u)(t_{n+1}^-)$, with 
			$(\mathcal{P}_h u)(t_{0}^-) = P_{h} u(t_0)$, and
			\begin{equation} \label{eq:space_time_kernel_proj}
				\int_{I_n} (u - \mathcal{P}_h u, v_h)_{\mathcal{T}_h}  \dif t = 0 \quad 
				\forall v_h \in P_{k-1}(I_n,V_h^{\text{div}}).
			\end{equation}
			\item $ \bar{ \mathcal{P}}_h : C(I_n; L^2(\Gamma)) \to  
			P_{k}(I_n;\bar{V}_h)$ satisfying $(\bar{\mathcal{P}}_h u 
			)(t_{n+1}^-) = (\bar{P}_h u)(t_{n+1}^-)$
			\begin{equation}  \label{eq:facet_space_time_proj}
				\int_{I_n} ( u - \bar{ \mathcal{P}}_h u, \bar{v}_h  
				)_{\partial \mathcal{T}_h} \dif t = 0, \quad \forall \bar{v}_h 
				\in P_{k-1}(I_n;\bar{V}_h).	
			\end{equation}
		\end{enumerate}
	\end{defn}
	\noindent We summarize the approximation properties of $\mathcal{P}_h$
	and $\bar{\mathcal{P}}_h$ in  
	\Cref{appendix:projection_estimates}.
	\subsection{Parabolic Stokes projection}
	Motivated by~\cite[Definition 4.2]{Chrysafinos:2010}, we introduce a parabolic 
	Stokes projection 
	which will be crucial to our error analysis in \Cref{sec:vel_err}:
	\begin{defn}[Parabolic Stokes projection] \label{def:stokes_proj}
		Let $u$ be the strong velocity solution to the Navier--Stokes system
		\cref{eq:ns_equations} guaranteed by \Cref{thm:strong_solution}.
		We define the parabolic Stokes projection $(\Pi_h u, \bar{\Pi}_h u, \Pi_h 
		p, \bar{\Pi}_h p) \in \mathcal{V}_h^{\text{div}} \times \bar{\mathcal{V}}_h \times \boldsymbol{\mathcal{Q}}_h$
		to be the solution to the following 
		space-time HDG scheme:
		\begin{equation} \label{eq:stokes_proj}
			\begin{split}
				-&\int_{I_n} (\Pi_h u, \partial_t v_h)_{\mathcal{T}_h} \dif t + 
				((\Pi_h u)_{n+1}^-,v_{n+1}^-)_{\mathcal{T}_h} +
				\int_{I_n} ( \nu a_h(\boldsymbol{\Pi}_h u, \boldsymbol{v}_h)  + 
				b_h(\boldsymbol{\Pi}_h p, v_h) ) \dif t \\
				&= ((\Pi_h u)_n^-,v_{n}^+)_{\mathcal{T}_h} +\int_{I_n} (\partial_t 
				u,  
				v_h)_{\mathcal{T}_h} \dif t +
				\int_{I_n} \nu a_h(\boldsymbol{u}, \boldsymbol{v}_h)\dif t  \quad 
				\forall 
				\boldsymbol{v}_h \in \boldsymbol{\mathcal{V}}_h, \\
				&\int_{I_n} b_h(\boldsymbol{q}_h, \Pi_h u) \dif t = 0 \quad \forall 
				\boldsymbol{q}_h \in 
				\boldsymbol{\mathcal{Q}}_h,
			\end{split}
		\end{equation}
		where $(\Pi_h u)_0^- = P_h u(t_0)$ and $(\bar{\Pi}_h u)_0^-$ may be 
		arbitrarily chosen. Here, we have denoted $\boldsymbol{\Pi}_h u = (\Pi_h u, 
		\bar{\Pi}_h 
		u)$ and $\boldsymbol{\Pi}_h p = (\Pi_h p, \bar{\Pi}_h p)$.
	\end{defn}
	
	\begin{rem}
		We remark that \cref{eq:stokes_proj} is simply 
		a space-time HDG scheme for the
		evolutionary Stokes problem \cref{eq:stokes_equations} with $f = u_t - \nu\Delta u$ and
		$u_0 = u(0)$. Consequently, $\Pi_h u \in 
		\mathcal{V}_h^{\text{div}}$ and thus $\Pi_h u \in H$.
	\end{rem}
	
	\subsection{Uniform bounds on the parabolic Stokes projection}
	To perform our error analysis in \Cref{sec:vel_err}, we will require
	a uniform bound on the Stokes projection:
	\begin{equation*}
		\text{ess\;sup}_{0 
			\le t 
			\le T} \tnorm{\boldsymbol{\Pi}_h u}_{\nu} \le C(u,u_0,\nu).
	\end{equation*}
	Our plan is to follow the proof of \cite[Theorem
	4.10]{Chrysafinos:2010}.  Therein, an essential ingredient is a
	discrete Stokes operator.  Unfortunately, as
	$(\cdot,\cdot)_{\mathcal{T}_h}$ is \emph{not} an inner-product on
	$\boldsymbol{V}_h$, we cannot leverage the Riesz representation
	theorem to infer the existence of a discrete Stokes operator in the
	HDG setting.  Instead, we introduce a novel discrete Stokes operator
	by mimicking the static condensation that occurs for the HDG method at
	the algebraic level following ideas from \cite{Calo:2019}.

	\subsubsection{Discrete Stokes operator}
	Consider the variational
	problem: find $\boldsymbol{\phi}_h \in V_h^{\text{div}} \times \bar{V}_h$ such 
	that
	\begin{equation*}\label{eq:disc-lapl-like_inv}
		a_h(\boldsymbol{\phi}_h,\boldsymbol{w}_h) = (u_h, w_h)_{\mathcal{T}_h}, 
		\quad 
		\forall \boldsymbol{w}_h \in V_h^{\text{div}} \times \bar{V}_h.
	\end{equation*} 
	This problem is well-posed by the Lax--Milgram theorem, implying 
	the existence of a well-defined solution operator $S_h: V_h 
	\to V_h^{\text{div}} \times \bar{V}_h$
	such that $\boldsymbol{\phi}_h = S_h (u_h)$.
	Note that $S_h$ \emph{need 
		not be surjective} onto the product space $V_h^{\text{div}} \times \bar{V}_h$. However, as in 
	\cite{Calo:2019},
	we can split the solution operator $S_h$ into ``element" and ``facet" solution 
	operators $S_{\mathcal{K}}$ and $S_{\mathcal{F}}$. We will 
	show that $S_{\mathcal{K}}$
	is invertible.
	
	Define the \emph{facet solution operator} $S_{\mathcal{F}}: V_h^{\text{div}} \to 
	\bar{V}_h$ as the 
	unique solution
	of
	\begin{equation}\label{eq:def_facet_op}
		a_h((v_h, S_{\mathcal{F}}(v_h)),(0,\bar{w}_h)) = 0, \quad \forall \bar{w}_h 
		\in 
		\bar{V}_h.
	\end{equation}
	Since $a_h(\cdot,\cdot)$ is symmetric, $S_{\mathcal{F}}$ is self-adjoint. Next, we introduce a new bilinear form on $V_h^{\text{div}} \times 
	V_h^{\text{div}}$:
	\begin{equation} \label{eq:astar_def}
		a_h^{\star}(v_h, w_h) = a_h((v_h, S_{\mathcal{F}}(v_h)),(w_h, 
		S_{\mathcal{F}}(w_h))),
	\end{equation}
	for which we introduce the \emph{element solution operator} $S_{\mathcal{K}} : 
	V_h^{\text{div}}\to 
	V_h^{\text{div}}$ satisfying
	\begin{equation*}
		a_h^{\star}(S_{\mathcal{K}}(u_h),w_h) = (u_h,w_h)_{\mathcal{T}_h}.
	\end{equation*}
	It can be shown that $S_h(u_h) = (S_{\mathcal{K}}(u_h),(S_{\mathcal{F}} \circ 
	S_{\mathcal{K}})(u_h))$ (see \cite[Lemma 3.1]{Calo:2019}).
	We observe that $S_{\mathcal{K}}: V_h^{\text{div}} \to V_h^{\text{div}}$ is 
	injective. By the Rank-Nullity theorem, $S_{\mathcal{K}}$ is bijective.
	Therefore, we can define an inverse operator $A_h = S_{\mathcal{K}}^{-1}$
	satisfying
	\begin{equation*} \label{eq:def_A_h}
		a_h^{\star}(u_h,w_h) = (A_h u_h, w_h)_{\mathcal{T}_h}, \quad \forall w_h 
		\in 
		V_h^{\text{div}}.
	\end{equation*}
	By \cref{eq:astar_def}, we have equivalently that
	\begin{equation} \label{eq:A_h_main_prop}
		a_h((u_h, S_{\mathcal{F}}(u_h)),(w_h, S_{\mathcal{F}}(w_h))) = (A_h u_h, 
		w_h)_{\mathcal{T}_h}, \quad \forall w_h \in V_h^{\text{div}}.
	\end{equation}

	\begin{lem} \label{lem:oper_facet_equiv}
		Fix
		a space-time slab $\mathcal{E}^n$.
		Let $(\Pi_h u,\bar{\Pi}_h u)$ be the velocity components 
		of the parabolic 
		Stokes projection solving \cref{eq:stokes_proj}, and let
		$S_{\mathcal{F}}: V_h^{\text{div}} \to 
		\bar{V}_h$ be the facet solution operator
		introduced in \cref{eq:def_facet_op}. Then, it holds that
		\begin{equation}
			\bar{\Pi}_h u=S_{\mathcal{F}}(\Pi_h u).
		\end{equation}
	\end{lem}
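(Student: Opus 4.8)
The goal is to identify the facet component $\bar\Pi_h u$ of the parabolic Stokes projection with $S_{\mathcal F}(\Pi_h u)$, where $S_{\mathcal F}$ is the facet solution operator defined by \cref{eq:def_facet_op}. The natural approach is to test the parabolic Stokes equations \cref{eq:stokes_proj} with test functions supported entirely on the facets, i.e.\ of the form $\boldsymbol v_h = (0,\bar v_h)$ with $\bar v_h \in \bar{\mathcal V}_h$, and to exploit that the forms $b_h(\cdot,\cdot)$ and the time-derivative and mass terms in \cref{eq:stokes_proj} see only the element-interior component $v_h$, so that they vanish on such test functions.

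First I would substitute $\boldsymbol v_h = (0,\bar v_h)$ into \cref{eq:stokes_proj}. Since $v_h = 0$, the terms $-\int_{I_n}(\Pi_h u,\partial_t v_h)_{\mathcal T_h}$, $((\Pi_h u)_{n+1}^-,v_{n+1}^-)_{\mathcal T_h}$, $\int_{I_n} b_h(\boldsymbol\Pi_h p, v_h)$, $((\Pi_h u)_n^-,v_n^+)_{\mathcal T_h}$, $\int_{I_n}(\partial_t u,v_h)_{\mathcal T_h}$ all vanish (inspecting the definition \cref{eq:formB} of $b_h$, it depends only on $v$, not $\bar v$). What remains is
\begin{equation*}
	\int_{I_n} \nu\, a_h\big((\Pi_h u,\bar\Pi_h u),(0,\bar v_h)\big)\,\mathrm dt = \int_{I_n} \nu\, a_h\big((u,\gamma(u)),(0,\bar v_h)\big)\,\mathrm dt \qquad \forall \bar v_h \in \bar{\mathcal V}_h.
\end{equation*}
Here I must be slightly careful: the right-hand side of \cref{eq:stokes_proj} contains $\nu a_h(\boldsymbol u,\boldsymbol v_h)$ with $\boldsymbol u=(u,\gamma(u))$, and since $u$ is a strong solution, $\gamma(u)$ is single-valued and $u-\gamma(u)=0$ on each $\partial K$; inspecting \cref{eq:formA}, every term of $a_h(\boldsymbol u,(0,\bar v_h))$ contains a factor $(u-\bar u)$ or the boundary penalty/consistency terms that vanish because $u$ has no jump and $u=\gamma(u)$ on facets. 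Hence the right-hand side is in fact zero, and we are left with
\begin{equation*}
	a_h\big((\Pi_h u,\bar\Pi_h u),(0,\bar v_h)\big) = 0 \qquad \forall \bar v_h \in \bar V_h,
\end{equation*}
after noting the identity holds at (almost) every fixed time $t\in I_n$ by a standard argument (the integrand is a polynomial in $t$ and $\bar v_h$ ranges over $P_{k_t}(I_n;\bar V_h)$, so one may localize in time). But this is precisely the defining relation \cref{eq:def_facet_op} of the facet solution operator applied to $v_h=\Pi_h u\in V_h^{\mathrm{div}}$. Since \cref{eq:def_facet_op} has a unique solution (it is well-posed by the coercivity of $a_h$ restricted to the facet space — a consequence of \cref{eq:ah_coer_bnd}, which controls the $h_K^{-1}\|\bar v - v\|_{L^2(\partial K)}^2$ part of $\tnorm{\cdot}_v$), we conclude $\bar\Pi_h u = S_{\mathcal F}(\Pi_h u)$.

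The one step requiring genuine care — and the main obstacle — is justifying the localization in time: going from the $\int_{I_n}\nu a_h(\cdots)\,\mathrm dt=0$ identity (valid for all $\bar v_h\in\bar{\mathcal V}_h$, i.e.\ time-polynomial-valued facet functions) to the pointwise-in-$t$ identity $a_h((\Pi_h u,\bar\Pi_h u),(0,\bar v_h))=0$ for all spatial $\bar v_h\in\bar V_h$. Since $\Pi_h u,\bar\Pi_h u\in P_{k_t}(I_n;\cdot)$ and $a_h$ is linear in each argument, $t\mapsto a_h((\Pi_h u,\bar\Pi_h u),(0,\bar v_h))$ is a polynomial of degree $\le 2k_t$ in time for each fixed spatial $\bar v_h$; testing against $\bar v_h(t,x)=\phi_i(t)\bar v_h(x)$ for $\phi_i$ an $L^2(I_n)$-orthonormal basis of $P_{k_t}(I_n)$ does not immediately annihilate this polynomial unless $2k_t\le k_t$. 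The cleaner route, which I would take, is to observe that the facet equation in \cref{eq:stokes_proj} holds for test functions $\boldsymbol v_h=(0,\bar v_h)$ with $\bar v_h\in\bar{\mathcal V}_h$ arbitrary, and that $\bar\Pi_h u(\cdot,t)$ for \emph{each fixed} $t$ is characterized by it — but actually the simplest and fully rigorous statement is just that both sides, as elements of the dual of $\bar{\mathcal V}_h$, agree, and since $S_{\mathcal F}$ applied slab-wise to the time-polynomial $\Pi_h u(\cdot,t)$ produces exactly a time-polynomial facet function satisfying the same variational identity, uniqueness in $\bar{\mathcal V}_h$ (again from Lax–Milgram applied on the product time-space facet space, using the mesh-size-independent equivalence of $\tnorm{\cdot}_v$ and $\tnorm{\cdot}_{v'}$ on $\boldsymbol V_h$ noted after the norm definitions) forces $\bar\Pi_h u=S_{\mathcal F}(\Pi_h u)$. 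I would phrase the final argument at the level of the slab-wise variational problem to avoid the pointwise-in-time subtlety entirely.
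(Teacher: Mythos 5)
Your proposal is correct and follows essentially the same route as the paper: test \cref{eq:stokes_proj} with facet-only test functions $(0,\bar v_h)$, note that every term involving the element test component as well as the exact-solution facet residual vanishes, and identify $\bar\Pi_h u$ with $S_{\mathcal F}(\Pi_h u)$ by uniqueness (positivity of $a_h$ restricted to facet functions). The only difference is the handling of time: the paper expands $\Pi_h u$, $\bar\Pi_h u$, and $\bar v_h$ in an $L^2(I_n)$-orthonormal basis of $P_k(I_n)$, which decouples the identity into one spatial identity per temporal coefficient and applies $S_{\mathcal F}$ coefficient-wise --- your feared degree-$2k_t$ obstruction does not actually arise, since against a time-independent spatial test function the integrand lies in $P_k(I_n)$ and orthogonality to all of $P_k(I_n)$ already forces it to vanish --- whereas you argue once at the slab level via uniqueness of the time-integrated facet problem, which is equally valid.
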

	
	\begin{proof}
		Set 
		$(v_h,\bar{v}_h,q_h,\bar{q}_h) = (0,\bar{v}_h,0,0)$ in 
		\cref{eq:stokes_proj} and expand $\Pi _h u$, $\bar{\Pi}_h u$, and $\bar{v}_h$ in terms of an
		orthonormal 
		basis $\cbr[0]{\phi_i(t)}_{i=0}^k$ of $P_k(I_n)$ with respect to the $L^2(I_n)$ inner-product
		to find 
		\begin{equation*}
			\begin{split}
				\sum_{i=0}^k \sum_{K \in \mathcal{T}_h} \int_{\partial K} 
				\frac{\alpha}{h_K} (\bar \Pi_h u)_i \cdot \bar{v}_i \dif s & = 
				\sum_{i=0}^k 
				\sum_{K \in \mathcal{T}_h} \int_{\partial K} 
				\del{\frac{\alpha}{h_K} (\Pi_h u)_i - 
					\frac{\partial (\Pi_h u)_i}{\partial n}} \cdot \bar{v}_i \dif s.
			\end{split}
		\end{equation*}
		By the definition of the operator $S_{\mathcal{F}}$, we have
		for each $i=0,\dots,k$,
		\begin{equation*}
			\sum_{K \in \mathcal{T}_h} \int_{\partial K} \del{\frac{\alpha}{h_K} 
				(\Pi_h u)_i - 
				\frac{\partial (\Pi_h u)_i}{\partial n}} \cdot \bar{v}_i \dif s = 
			\sum_{K 
				\in 
				\mathcal{T}_h} \int_{\partial K} \frac{\alpha}{h_K} S_{\mathcal{F}} 
			((\Pi_h u)_i)\cdot \bar{v}_i \dif s, 
		\end{equation*}
		and moreover, each $S_{\mathcal{F}}((\Pi_h u)_i)$ is unique. 
		Choosing $\bar{v}_i  = (\bar \Pi_h u)_i- S_{\mathcal{F}}((\Pi_h u)_i) \in 
		\bar{V}_h$ and rearranging
		allows us to conclude $(\bar \Pi_h u)_i = S_{\mathcal{F}}((\Pi_h u)_i)$ for 
		each 
		$i=0,\dots,k$. The result follows by uniqueness of the expansions of $u_h$ and $\bar{u}_h$
		with respect to the chosen basis of $P_k(I_n)$ and the linearity of $S_{\mathcal{F}}$.
	\end{proof}
	\begin{lem}\label{lem:disc_lap_properties}
		Fix
		a space-time slab $\mathcal{E}^n$. Let $(\Pi_h u,\bar{\Pi}_h u) \in \mathcal{V}_h^{\text{div}} \times \bar{\mathcal{V}}_h$ be the velocity components 
		of the parabolic 
		Stokes projection solving \cref{eq:stokes_proj} and
		let $A_h: V_h^{\text{div}} \to V_h^{\text{div}}$ be the discrete Stokes operator satisfying \cref{eq:A_h_main_prop}.
		For notational convenience, we denote $A_h \boldsymbol{\Pi}_h u = (A_h \Pi_h u, A_h\bar \Pi_h u )$. 
		Then, for all $t \in I_n$, it holds that:
		\begin{align} 
			a_h(\boldsymbol{\Pi}_h u,A_h \boldsymbol{\Pi}_h u) &= 
			\norm{A_h \Pi_h u}_{L^2(\Omega)}^2, \label{eq:A_h_property1} \\
			(\partial_t \Pi_h u, A_h \Pi_h u)_{\mathcal{T}_h}&= 
			\frac{1}{2} 
			\od{}{t} a_h(\boldsymbol{\Pi}_h u,\boldsymbol{\Pi}_h u). \label{eq:A_h_property2}
		\end{align}
	\end{lem}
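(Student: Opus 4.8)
The plan is to prove both identities pointwise in $t\in I_n$ by reducing them to properties of the condensed form $a_h^{\star}$ and the operator $A_h$ on $V_h^{\text{div}}$, with \Cref{lem:oper_facet_equiv} used to rewrite the facet components. First I would note that, since $\Pi_h u\in\mathcal{V}_h^{\text{div}}$, \Cref{lem:kernel_equiv} gives $\Pi_h u|_{\mathcal{E}^n}\in P_k(I_n;V_h^{\text{div}})$, so that for every fixed $t\in I_n$ both $\Pi_h u(t)$ and $\partial_t\Pi_h u(t)$ lie in $V_h^{\text{div}}$; hence $S_{\mathcal{F}}$, $a_h^{\star}$, and $A_h$ may be applied pointwise in $t$. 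By \Cref{lem:oper_facet_equiv}, $\bar\Pi_h u(t)=S_{\mathcal{F}}(\Pi_h u(t))$, i.e.\ $\boldsymbol{\Pi}_h u(t)=(\Pi_h u(t),S_{\mathcal{F}}(\Pi_h u(t)))$, and under the stated notational convention $A_h\boldsymbol{\Pi}_h u(t)=(A_h\Pi_h u(t),S_{\mathcal{F}}(A_h\Pi_h u(t)))$; both slots of $a_h$ thus have exactly the form $(v_h,S_{\mathcal{F}}(v_h))$ appearing in \cref{eq:A_h_main_prop}.

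For \cref{eq:A_h_property1}, I would then simply apply \cref{eq:A_h_main_prop} with $u_h=\Pi_h u(t)$ and test function $w_h=A_h\Pi_h u(t)\in V_h^{\text{div}}$, which immediately yields $a_h(\boldsymbol{\Pi}_h u,A_h\boldsymbol{\Pi}_h u)=(A_h\Pi_h u,A_h\Pi_h u)_{\mathcal{T}_h}=\norm{A_h\Pi_h u}_{L^2(\Omega)}^2$.

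For \cref{eq:A_h_property2}, I would use $\boldsymbol{\Pi}_h u=(\Pi_h u,S_{\mathcal{F}}(\Pi_h u))$ and the definition \cref{eq:astar_def} of $a_h^{\star}$ to write $a_h(\boldsymbol{\Pi}_h u,\boldsymbol{\Pi}_h u)=a_h^{\star}(\Pi_h u,\Pi_h u)$, which is a polynomial in $t$. Since $a_h$ is symmetric, \cref{eq:astar_def} shows $a_h^{\star}$ is symmetric, so the product rule together with bilinearity and symmetry gives $\tfrac12\,\od{}{t}\,a_h^{\star}(\Pi_h u,\Pi_h u)=a_h^{\star}(\partial_t\Pi_h u,\Pi_h u)$. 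Using symmetry of $a_h^{\star}$ once more and then \cref{eq:A_h_main_prop} (via \cref{eq:astar_def}) with the admissible test function $\partial_t\Pi_h u\in V_h^{\text{div}}$ gives $a_h^{\star}(\partial_t\Pi_h u,\Pi_h u)=a_h^{\star}(\Pi_h u,\partial_t\Pi_h u)=(A_h\Pi_h u,\partial_t\Pi_h u)_{\mathcal{T}_h}$, and symmetry of the $L^2$ inner product delivers the second identity.

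The whole argument is a short chain of substitutions; the only points needing care are checking that every slot of $a_h$ is consistently of the form $(v_h,S_{\mathcal{F}}(v_h))$ so that \cref{eq:A_h_main_prop} is applicable, and that symmetry of $a_h$ is inherited by $a_h^{\star}$ — both immediate from \Cref{lem:oper_facet_equiv} and the definition \cref{eq:astar_def}. I do not anticipate a genuine obstacle here; the lemma is essentially bookkeeping that isolates the static-condensation structure for later use.
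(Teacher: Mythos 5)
Your proposal is correct and follows essentially the same route as the paper: identify the facet components via \Cref{lem:oper_facet_equiv} (so every slot of $a_h$ has the form $(v_h,S_{\mathcal{F}}(v_h))$, and in particular $A_h\bar\Pi_h u = S_{\mathcal{F}}(A_h\Pi_h u)$), then apply \cref{eq:A_h_main_prop} pointwise in $t$, using symmetry of $a_h^{\star}$ and the product rule for \cref{eq:A_h_property2}. The paper compresses these as "basic calculations," and your write-up simply makes them explicit.
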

	\begin{proof}
		By \Cref{lem:oper_facet_equiv} and the linearity of $A_h$ and $S_{\mathcal{F}}$, we find
		\begin{equation*}
			S_{\mathcal{F}} \del{ A_h \Pi_h u} = A_h \del{S_{\mathcal{F}} \Pi_h u} = A_h \bar{\Pi}_h u, \quad
			S_{\mathcal{F}} \del{ \partial_t  \Pi_h u} =   \partial_t  \del{ S_{\mathcal{F}} \Pi_h u} = \partial_t \bar{\Pi}_h u.
		\end{equation*}
		The conclusion follows from \cref{eq:A_h_main_prop} after some basic calculations.
	\end{proof}
	
	\subsubsection{Bounding the Stokes projection} 
	
	\begin{lem}[Uniform bound on the Stokes projection] \label{lem:stokes_proj_bnd}
		Let $u$ be the strong velocity solution to the Navier--Stokes system
		\cref{eq:ns_equations} guaranteed by \Cref{thm:strong_solution} and let $(\Pi_h u, \bar{\Pi}_h u, \Pi_h 
		p, \bar{\Pi}_h p)\in \mathcal{V}_h^{\text{div}} \times \bar{\mathcal{V}}_h \times \boldsymbol{\mathcal{Q}}_h$
		be the solution to \cref{eq:stokes_proj}, 
		where we set $\bar{u}_0^- = \bar{P}_h u_0 $. Then, it holds that
		\begin{equation*}
			\norm{\boldsymbol{\Pi}_h 
				u}_{L^{\infty}(0,T;\boldsymbol{\mathcal{V}}_h)}^2  
			\le C \del{ 
				\frac{1}{\nu} \int_{0}^T \norm{\partial_t u - \nu \Delta  
					u}_{L^2(\Omega)}^2 \dif t  
				+ 
				\norm{u_0}_{H^1(\Omega)}^2}.
		\end{equation*}
		Here, we define $\norm{\boldsymbol{\Pi}_h 
			u}_{L^{\infty}(0,T;\boldsymbol{\mathcal{V}}_h)} \equiv \text{ess\;sup}_{0 
			\le t 
			\le T} \tnorm{\boldsymbol{\Pi}_h u}_{\nu}$.
	\end{lem}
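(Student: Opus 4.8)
The plan is to follow the argument of \cite[Theorem 4.10]{Chrysafinos:2010}, with the continuous Stokes operator replaced by the discrete operator $A_h$ and the identities \cref{eq:A_h_property1}--\cref{eq:A_h_property2}, and — exactly as in the two-step structure of the proof of \Cref{thm:uniqueness_HO} — to first establish an energy bound and then promote it to an $L^\infty$-in-time bound by testing with a discrete characteristic function. Set $\Phi(t):=a_h(\boldsymbol{\Pi}_h u(t),\boldsymbol{\Pi}_h u(t))$, a polynomial of degree $2k$ on each slab; by \cref{eq:ah_coer_bnd} we have $\tnorm{\boldsymbol{\Pi}_h u(t)}_v^2 \le C\,\Phi(t)$, so it suffices to bound $\sup_{[0,T]}\Phi$. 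I will use repeatedly that $A_h\boldsymbol{\Pi}_h u=(A_h\Pi_h u,\,S_{\mathcal{F}}(A_h\Pi_h u))$ — and its discrete characteristic function, obtained by acting on the temporal coefficients only and hence commuting with $S_{\mathcal{F}}$ — is an admissible test pair in $\boldsymbol{\mathcal{V}}_h$ whose velocity component lies in $\mathcal{V}_h^{\text{div}}$ (via \Cref{lem:kernel_equiv}), so that $b_h(\boldsymbol{\Pi}_h p,\cdot)$ annihilates it; that $a_h(\boldsymbol{\Pi}_h u,(w_h,S_{\mathcal{F}}w_h))=(A_h\Pi_h u,w_h)_{\mathcal{T}_h}$ for $w_h\in V_h^{\text{div}}$ by \cref{eq:A_h_main_prop} and \Cref{lem:oper_facet_equiv}; and that $a_h(\boldsymbol{u},(v_h,\bar v_h))=(-\Delta u,v_h)_{\mathcal{T}_h}$ for $u\in H^2(\Omega)^d\cap V$, since on $\boldsymbol{u}=(u,\gamma u)$ the penalty and one symmetrisation term in \cref{eq:formA} vanish and the remaining interface term disappears after integration by parts.

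\emph{Energy estimate.} I would test \cref{eq:stokes_proj} with $\boldsymbol{v}_h=A_h\boldsymbol{\Pi}_h u$ and integrate by parts in time. The time-derivative contribution becomes $\tfrac12[\Phi(t_{n+1}^-)-\Phi(t_n^+)]$ by \cref{eq:A_h_property2}; the interface contribution reduces to $(\sbr{\Pi_h u}_n,(A_h\Pi_h u)_n^+)_{\mathcal{T}_h}=a_h^\star((\Pi_h u)_n^+,\sbr{\Pi_h u}_n)$, which I would rewrite using $a_h^\star(a,a-b)=\tfrac12 a_h^\star(a-b,a-b)+\tfrac12 a_h^\star(a,a)-\tfrac12 a_h^\star(b,b)$ together with \cref{eq:A_h_main_prop}; the viscous term is precisely $\nu\int_{I_n}\norm{A_h\Pi_h u}_{L^2(\Omega)}^2\dif t$ by \cref{eq:A_h_property1}; and the right-hand side collapses to $\int_{I_n}(\partial_t u-\nu\Delta u,A_h\Pi_h u)_{\mathcal{T}_h}\dif t$. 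Estimating the latter by Cauchy--Schwarz and Young's inequality with weight $\nu$, absorbing half of the viscous term, summing over $n$ (the $\Phi$-terms telescope, with $(\Pi_h u)_0^-=P_h u_0$, $(\bar\Pi_h u)_0^-=\bar P_h u_0$), and bounding $\Phi(t_0^-)\lesssim\norm{u_0}_{H^1(\Omega)}^2$, I would obtain
\begin{equation*}
\Phi(t_N^-)+\sum_{n=0}^{N-1}a_h^\star(\sbr{\Pi_h u}_n,\sbr{\Pi_h u}_n)+\nu\int_0^T\norm{A_h\Pi_h u}_{L^2(\Omega)}^2\dif t \;\le\; C D^2,\qquad D^2:=\tfrac1\nu\int_0^T\norm{\partial_t u-\nu\Delta u}_{L^2(\Omega)}^2\dif t+\norm{u_0}_{H^1(\Omega)}^2 .
\end{equation*}

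\emph{$L^\infty$ estimate.} For each $m$ I would pick $s\in\overline{I_m}$ maximising $\Phi|_{\overline{I_m}}$ and test \cref{eq:stokes_proj} on $\mathcal{E}^m$ with $\boldsymbol{v}_h=((A_h\Pi_h u)_\chi,\,S_{\mathcal{F}}((A_h\Pi_h u)_\chi))$, where $(A_h\Pi_h u)_\chi$ is the discrete characteristic function \cref{eq:disc_char_function} for this $s$ (a one-sided limit at an endpoint). The time-derivative term then gives $\tfrac12[\Phi(s)-\Phi(t_m^+)]$ by \cref{eq:disc_char_function} and \cref{eq:A_h_property2}; the interface term $a_h^\star((\Pi_h u)_m^+,\sbr{\Pi_h u}_m)$ is $\ge\tfrac12\Phi(t_m^+)-\tfrac12\Phi(t_m^-)$ by Cauchy--Schwarz in the symmetric positive-semidefinite form $a_h^\star$; the viscous term is $\nu\int_{I_m}(A_h\Pi_h u,(A_h\Pi_h u)_\chi)_{\mathcal{T}_h}\dif t$; and the right-hand side is $\int_{I_m}(\partial_t u-\nu\Delta u,(A_h\Pi_h u)_\chi)_{\mathcal{T}_h}\dif t$. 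Using \cref{eq:bnd_char}, Cauchy--Schwarz, Young's inequality, and $\Phi(t_m^-)\le\sup_{\overline{I_{m-1}}}\Phi$ (with $\sup_{\overline{I_{-1}}}\Phi:=\Phi(t_0^-)$) I would reach
\begin{equation*}
\tfrac12\sup_{\overline{I_m}}\Phi-\tfrac12\sup_{\overline{I_{m-1}}}\Phi \;\le\; C\nu\norm{A_h\Pi_h u}_{L^2(I_m;L^2(\Omega))}^2+\tfrac1{2\nu}\norm{\partial_t u-\nu\Delta u}_{L^2(I_m;L^2(\Omega))}^2 .
\end{equation*}
Summing over $m=0,\dots,M$ for arbitrary $M$ telescopes the left side to $\tfrac12\sup_{\overline{I_M}}\Phi-\tfrac12\Phi(t_0^-)$, while the right side is $\le C D^2$ by the energy estimate; since $M$ is arbitrary, $\sup_{[0,T]}\Phi\le C D^2$, and the conclusion follows from $\tnorm{\boldsymbol{\Pi}_h u(t)}_v^2\le C\Phi(t)$ ($\Phi$ being continuous on each slab, its essential supremum equals its supremum).

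I expect the crux to be the $L^\infty$ step: one must verify that the discrete characteristic function of $A_h\boldsymbol{\Pi}_h u$, paired with $S_{\mathcal{F}}$ on the facets so that \cref{eq:A_h_main_prop} applies, is a legitimate test function, and then control the \emph{inter-slab} interface term $(\sbr{\Pi_h u}_m,(A_h\Pi_h u)_m^+)_{\mathcal{T}_h}$ by passing to the $a_h^\star$-inner product and applying Cauchy--Schwarz there, which is precisely what exposes the telescoping quantity $\tfrac12\Phi(t_m^+)-\tfrac12\Phi(t_m^-)$. A secondary technical point is the bound $\Phi(t_0^-)\lesssim\norm{u_0}_{H^1(\Omega)}^2$, i.e.\ $H^1$-stability of the $L^2$-projection $P_h$ onto $V_h^{\text{div}}$; I would obtain this from quasi-uniformity of $\mathcal{T}_h$, an inverse inequality, and the stability and approximation properties of the BDM projection.
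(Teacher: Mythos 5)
Your proposal is correct, and its first step is essentially the paper's: both test \cref{eq:stokes_proj} with $A_h\boldsymbol{\Pi}_h u$, use \cref{eq:A_h_property1}--\cref{eq:A_h_property2} together with \Cref{lem:oper_facet_equiv}, and telescope to control $a_h(\boldsymbol{\Pi}_h u,\boldsymbol{\Pi}_h u)$ at the partition points plus $\nu\int_0^T\norm{A_h\Pi_h u}_{L^2(\Omega)}^2\dif t$; your polarization identity in $a_h^{\star}$ is a repackaging of the paper's Cauchy--Schwarz/Young treatment of the inter-slab terms. Where you genuinely diverge is the promotion to an $L^\infty$-in-time bound: the paper tests with the exponential interpolant $\tilde A_h\boldsymbol{\Pi}_h u$, takes $\lambda=1/\Delta t$, and uses the scaling \cref{eq:time_scaling} with $p=\infty$ slab by slab (mirroring the proof of \Cref{lem:linfty_bnd}), whereas you test with the discrete characteristic function of $A_h\Pi_h u$ at the maximizer of $\Phi(t)=a_h(\boldsymbol{\Pi}_h u,\boldsymbol{\Pi}_h u)$, mirroring step two of the proof of \Cref{thm:uniqueness_HO}. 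Your variant works: the characteristic map acts only on temporal coefficients, hence commutes with $S_{\mathcal{F}}$ and $A_h$, so the test pair is admissible and \cref{eq:A_h_main_prop} applies; the time-derivative term yields $\tfrac12[\Phi(s)-\Phi(t_m^+)]$; the inter-slab term is handled by Cauchy--Schwarz in the positive semidefinite form $a_h^{\star}$; and the viscous and forcing cross terms are absorbed, via \cref{eq:bnd_char}, by the step-one bound on $\nu\int_0^T\norm{A_h\Pi_h u}_{L^2(\Omega)}^2\dif t$ --- exactly the quantities the paper absorbs in \cref{eq:stokes_proj_bnd_inter2}. The trade-off: your route avoids the exponential weight and the $L^\infty$--$L^2$ time scaling, but relies on cross-slab telescoping, so at $m=0$ you should note explicitly that $a_h^{\star}(P_hu_0,P_hu_0)\le a_h((P_hu_0,\bar P_hu_0),(P_hu_0,\bar P_hu_0))$, which follows from the minimization property of $S_{\mathcal{F}}$ (the purely-facet contribution $a_h((0,\bar\mu),(0,\bar\mu))$ is nonnegative); the paper's exponential-interpolant argument is slab-local, needing only the slab-endpoint bound from step one, at the price of the weighted integration-by-parts and scaling argument. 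Both routes produce the same final estimate, and your initial-data bound $\Phi(t_0^-)\lesssim\norm{u_0}_{H^1(\Omega)}^2$ via inverse estimates and BDM stability matches (indeed slightly elaborates) the paper's appeal to \Cref{lem:approx_prop_spat_proj}.
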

	
	\begin{proof}
		The proof will proceed in two steps. In the first step, we bound the Stokes projection at the partition points 
		of the time-intervals. In the second step, we use the exponential interpolant,
		combined with the results of the first step, to obtain a uniform bound on the Stokes projection over $(0,T)$.
		\\
		
		\noindent \textbf{\emph{Step one:}}
		Integrating by parts in time
		in the term containing the temporal derivative in \cref{eq:stokes_proj},
		testing with $\boldsymbol{v}_h = A_h \boldsymbol{\Pi}_h u$ and using \cref{eq:A_h_property1} and \cref{eq:A_h_property2} 
		in~\Cref{lem:disc_lap_properties}, we have
		\begin{equation*}
			\begin{split}
				\frac{1}{2}\int_{I_n} &\od{}{t} a_h(\boldsymbol{\Pi}_h 
				u,\boldsymbol{\Pi}_h u) 
				\dif 
				t + 
				a_h((\boldsymbol{\Pi}_h u)_n^+,(\boldsymbol{\Pi}_h u)_n^+)  +
				\nu \int_{I_n} \norm{A_h \Pi_h u}_{L^2(\Omega)}^2 \\& = \int_{I_n} ( 
				\partial_t u 
				- \nu \Delta  
				u , A \Pi_h u)_{\mathcal{T}_h} \dif t + a_h((\boldsymbol{\Pi}_h u)_n^-,(\boldsymbol{\Pi}_h u)_n^+).
			\end{split}
		\end{equation*}
		Using the coercivity of $a_h(\cdot,\cdot)$, the Cauchy--Schwarz inequality, Young's inequality,
		and summing over all space-time slabs, we find
		\begin{equation*}
			\begin{split}
				\tnorm{(\boldsymbol{\Pi}_h 
					u)_{N}^-}_v^2 &+ 
				\sum_{n=0}^{N-1}\tnorm{[\boldsymbol{\Pi}_h 
					u]_n}_v^2	+ \nu \int_{0}^T \norm{A_h 
					\Pi_h u}_{L^2(\Omega)}^2 \dif t   \\&\le 
				C\del[3]{\frac{1}{\nu} \int_{0}^T \norm{\partial_t u 
						- \nu \Delta  
						u}_{L^2(\Omega)}^2 \dif t + \tnorm{(\boldsymbol{\Pi}_h 
						u)_{0}^-}_v^2}.
			\end{split}
		\end{equation*}
		As $(\Pi_h u_0)^- = P_h u_0$ and
		$(\bar{\Pi}_h u)_0^- = \bar{P}_h u_0$, we have from
		\Cref{lem:approx_prop_spat_proj} and the approximation
		properties of $\bar{P}_h$ (see e.g. \cite{Rhebergen:2018c})
		that
		\begin{equation}\label{eq:low_order_bnd}
			\begin{split}
				\tnorm{(\boldsymbol{\Pi}_h 
					u)_{N}^-}_v^2 + 
				\sum_{n=0}^{N-1}\tnorm{[\boldsymbol{\Pi}_h 
					u]_n}_v^2	+ \nu \int_{0}^T \norm{A 
					\Pi_h u}_{L^2(\Omega)}^2 \dif t     
				\le 
				C\del{\frac{1}{\nu} \int_{0}^T \norm{\partial_t u 
						- \nu \Delta  
						u}_{L^2(\Omega)}^2 \dif t + \norm{u_0}_{H^1(\Omega)}^2}.
			\end{split}
		\end{equation}
		Note
		that for the lowest order scheme $(k=1)$, we can already 
		infer the result.
		\\
		
		\noindent \textbf{\emph{Step two:}}
		It remains to
		obtain a bound for higher order polynomials in time. 
		For this, we use the exponential interpolant of the pair $A_h \boldsymbol{\Pi}_h u = (A_h \Pi_h u, A_h \bar{\Pi}_h u)$, which we denote by
		$\tilde{A}_h \boldsymbol{\Pi}_h u= (\tilde{A}_h \Pi_h u, \tilde{A}_h \bar{\Pi}_h u)$.
		Integrating the first term on the left hand side of \cref{eq:stokes_proj} by parts in time, choosing $\boldsymbol{v}_h = \tilde{A}_h \boldsymbol{\Pi}_h u$, and using \cref{eq:exp_interp_property}, \cref{eq:A_h_property2}, and that $\tilde{A}_h 
		\Pi_h u\in \mathcal{V}_h^{\text{div}}$, we have
		\begin{equation*} \label{eq:stokes_pro_bnd_inter}
			\begin{split}
				\frac{1}{2}\int_{I_n} e^{-\lambda(t-t_n)} \od{}{t} 
				a_h(\boldsymbol{\Pi}_h u,&\boldsymbol{\Pi}_h u) \dif t + 
				((\Pi_h u)_{n}^{+}, (A_h \Pi_h u)_{n}^+)_{\mathcal{T}_h} +
				\nu\int_{I_n} a_h(\boldsymbol{\Pi}_hu , \tilde{A}_h 
				\boldsymbol{\Pi}_h u) 
				\dif t \\  
				&= 
				\int_{I_n}( \partial_t u 
				- \nu \Delta  
				u , \tilde{A}_h \Pi_h u)_{\mathcal{T}_h} \dif t + ((\Pi_h u)_{n}^{-}, 
				(\tilde A_h \Pi_h u)_{n}^+)_{\mathcal{T}_h}.
			\end{split}
		\end{equation*}
		Proceeding in an identical fashion as in the proof of \Cref{lem:linfty_bnd},
		and using \cref{eq:lp_bnd_exp_interp}, we obtain
		\begin{equation}\label{eq:stokes_proj_bnd_inter2}
			\begin{split}
				\frac{e^{-1}C}{2} & 
				\norm{\boldsymbol{\Pi}_h 
					u}_{L^{\infty}(I_n;\boldsymbol{\mathcal{V}}_h)}^2 
				+
				\frac{e^{-1}C}{2} \tnorm{(\boldsymbol{\Pi}_h u )_{n+1}^-}_v^2  \\  
				&\le  C\del[3]{
					\frac{1}{\nu} \int_{I_n} \norm{\partial_t u 
						- \nu \Delta  
						u}_{L^2(\Omega)}^2 \dif t + 
					\nu \int_{I_n} \norm{A \Pi_h
						u}_{L^2(\Omega)}^2 
					\dif t + 
					\tnorm{(\boldsymbol{\Pi}_h u )_{n}^-}_v^2}.
			\end{split}
		\end{equation}
		Bounding the last two terms on the right hand side of \cref{eq:stokes_proj_bnd_inter2}
		using~\cref{eq:low_order_bnd} and omitting
		the second (positive) term on the left hand side, we see that there exists 
		a constant $C>0$ such that
		\begin{equation*}
			\norm{\boldsymbol{\Pi}_h 
				u}_{L^{\infty}(I_n;\boldsymbol{\mathcal{V}}_h)}^2  
			\le C \del{ 
				\frac{1}{\nu} \int_{0}^T \norm{\partial_t u 
					- \nu \Delta  
					u}_{L^2(\Omega)}^2 \dif t  + 
				\norm{u_0}_{H^1(\Omega)}^2}.
		\end{equation*}
		This bound holds uniformly for every space-time slab, so
		the result follows.
	\end{proof}

	\subsubsection{Approximation properties of the parabolic Stokes projection}
	
	\begin{lem} \label{lem:disc_err}
		Let $u$ be the strong velocity solution to the Navier--Stokes system
		\cref{eq:ns_equations} guaranteed by \Cref{thm:strong_solution}, let $(\Pi_h u , \bar{\Pi}_h u) \in \mathcal{V}_h^{\text{div}} \times \bar{\mathcal{V}}_h$ be the velocity pair of
		the Stokes projection \cref{eq:stokes_proj} for $n=0,\dots,N-1$, and let 
		$\mathcal{P}_h$ and 
		$\bar{\mathcal{P}}_h$ 
		denote
		the projections introduced in \Cref{def:space_time_kernel_proj}.
		Let $\zeta_h = \mathcal{P}_h u - \Pi_h u$, $\xi_h = u - \mathcal{P}_h u$,
		$\bar{\zeta}_h = \bar{\mathcal{P}}_h u - \bar{\Pi}_h u$ and $\bar{\xi}_h = 
		u - \bar{\mathcal{P}}_h u$.
		There is a constant $C>0$ such that
		\begin{equation*}
			\norm[1]{ \zeta_h(t_{n+1}^-)}_{L^2(\Omega)}^2 + 
			\sum_{n=0}^{N-1} \norm[1]{ \sbr{\zeta_h}_i}_{L^2(\Omega)}^2 + 
			\nu \int_{0}^{T} \tnorm{\boldsymbol{\zeta}_h}_v^2 \dif t
			\le C \nu 
			\int_0^{T} \tnorm{\boldsymbol{\xi}_h}_{v'}^2  \dif t.
		\end{equation*}
	\end{lem}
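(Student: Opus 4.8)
The plan is to run the standard DG‑in‑time projection error argument, adapted to the hybridized, pointwise‑solenoidal setting, in the spirit of \cite{Chrysafinos:2010}; no Gr\"onwall inequality will be needed. The first and central step is to derive an error equation for $\boldsymbol{\zeta}_h=(\zeta_h,\bar{\zeta}_h)$. Fix a slab $\mathcal{E}^n$ and let $\boldsymbol{v}_h=(v_h,\bar v_h)\in\mathcal{V}_h^{\text{div}}\times\bar{\mathcal{V}}_h$; then $\partial_t v_h|_{I_n}\in P_{k-1}(I_n;V_h^{\text{div}})$, so the orthogonality \cref{eq:space_time_kernel_proj} defining $\mathcal{P}_h$ gives $\int_{I_n}(\mathcal{P}_h u-u,\partial_t v_h)_{\mathcal{T}_h}\dif t=0$. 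Integrating by parts in time and using $(\mathcal{P}_h u)(t_{n+1}^-)=P_h u(t_{n+1})$ and $(\mathcal{P}_h u)_n^-=P_h u(t_n)$, together with the fact that $P_h$ is the $L^2$‑projection onto $V_h^{\text{div}}$ (so the endpoint terms $(u(t_{n+1}),v_{n+1}^-)_{\mathcal{T}_h}$ and $(u(t_n),v_n^+)_{\mathcal{T}_h}$ may be replaced by their $P_h$‑projected counterparts, as $v_{n+1}^-,v_n^+\in V_h^{\text{div}}$), produces a clean identity for $\mathcal{P}_h u$. Adding $\nu\int_{I_n}a_h(\boldsymbol{\mathcal{P}}_h u,\boldsymbol{v}_h)\dif t$ to both sides, subtracting the Stokes‑projection equation \cref{eq:stokes_proj} tested against the same $\boldsymbol{v}_h$ (its pressure term $\int_{I_n}b_h(\boldsymbol{\Pi}_h p,v_h)\dif t$ vanishes because $v_h$ is pointwise discretely solenoidal), and writing $\boldsymbol{u}-\boldsymbol{\mathcal{P}}_h u=\boldsymbol{\xi}_h=(\xi_h,\bar{\xi}_h)$, yields
\begin{equation*}
-\int_{I_n}(\zeta_h,\partial_t v_h)_{\mathcal{T}_h}\dif t+((\zeta_h)_{n+1}^-,v_{n+1}^-)_{\mathcal{T}_h}+\nu\int_{I_n}a_h(\boldsymbol{\zeta}_h,\boldsymbol{v}_h)\dif t=((\zeta_h)_n^-,v_n^+)_{\mathcal{T}_h}-\nu\int_{I_n}a_h(\boldsymbol{\xi}_h,\boldsymbol{v}_h)\dif t
\end{equation*}
for all $\boldsymbol{v}_h\in\mathcal{V}_h^{\text{div}}\times\bar{\mathcal{V}}_h$. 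Since $\mathcal{P}_h u$ is itself discretely divergence free by construction, $\boldsymbol{\zeta}_h$ is an admissible test pair here.

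Next I would test the error equation with $\boldsymbol{v}_h=\boldsymbol{\zeta}_h$. Using $-\int_{I_n}(\zeta_h,\partial_t\zeta_h)_{\mathcal{T}_h}\dif t=\tfrac12\norm{(\zeta_h)_n^+}_{L^2(\Omega)}^2-\tfrac12\norm{(\zeta_h)_{n+1}^-}_{L^2(\Omega)}^2$ and the polarization identity $((\zeta_h)_n^-,(\zeta_h)_n^+)_{\mathcal{T}_h}=\tfrac12\norm{(\zeta_h)_n^-}_{L^2(\Omega)}^2+\tfrac12\norm{(\zeta_h)_n^+}_{L^2(\Omega)}^2-\tfrac12\norm{\sbr{\zeta_h}_n}_{L^2(\Omega)}^2$, the slab‑interior ``$n^+$'' terms cancel and one is left with
\begin{equation*}
\tfrac12\norm{(\zeta_h)_{n+1}^-}_{L^2(\Omega)}^2+\tfrac12\norm{\sbr{\zeta_h}_n}_{L^2(\Omega)}^2+\nu\int_{I_n}a_h(\boldsymbol{\zeta}_h,\boldsymbol{\zeta}_h)\dif t=\tfrac12\norm{(\zeta_h)_n^-}_{L^2(\Omega)}^2-\nu\int_{I_n}a_h(\boldsymbol{\xi}_h,\boldsymbol{\zeta}_h)\dif t.
\end{equation*}
I would then bound the last term via the continuity estimate $\envert{a_h(\boldsymbol{\xi}_h,\boldsymbol{\zeta}_h)}\le C\tnorm{\boldsymbol{\xi}_h}_{v'}\tnorm{\boldsymbol{\zeta}_h}_{v'}$ from \cref{eq:ah_coer_bnd}, the equivalence of $\tnorm{\cdot}_v$ and $\tnorm{\cdot}_{v'}$ on $\boldsymbol{V}_h$, and Young's inequality with a parameter small enough that the resulting $\nu\tnorm{\boldsymbol{\zeta}_h}_v^2$ contribution is absorbed into the coercive term $\nu\int_{I_n}a_h(\boldsymbol{\zeta}_h,\boldsymbol{\zeta}_h)\dif t\ge C\nu\int_{I_n}\tnorm{\boldsymbol{\zeta}_h}_v^2\dif t$; the leftover right‑hand side is $C\nu\int_{I_n}\tnorm{\boldsymbol{\xi}_h}_{v'}^2\dif t$.

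Finally, summing over $n=0,\dots,N-1$, the boundary terms $\tfrac12\norm{(\zeta_h)_n^-}_{L^2(\Omega)}^2$ telescope, and the initial contribution vanishes because $(\zeta_h)_0^-=(\mathcal{P}_h u)_0^--(\Pi_h u)_0^-=P_h u(t_0)-P_h u(t_0)=0$. This gives the claimed estimate with $\zeta_h(t_N^-)$ on the left; summing only up to an arbitrary $m$ and bounding the right‑hand side by the full‑time integral gives the stated version with $\zeta_h(t_{m+1}^-)$. No discrete Gr\"onwall inequality is needed, since the only quantity carried between slabs telescopes exactly.

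I expect Step~1 to be the main obstacle: one must arrange the integration by parts in time and the endpoint substitutions so that every inter‑slab boundary contribution collapses to the single term $((\zeta_h)_n^-,v_n^+)_{\mathcal{T}_h}$ with \emph{no} leftover consistency error, which hinges on the compatibility between the temporal endpoint condition defining $\mathcal{P}_h$ and the spatial $L^2$‑projection $P_h$ used to initialize \cref{eq:stokes_proj} (both anchored at $P_h u$); and one must check that $\mathcal{P}_h u$ is discretely divergence free, so that $\boldsymbol{\zeta}_h$ is an admissible test pair and the pressure contribution in \cref{eq:stokes_proj} is inert. Once the error equation is in the displayed form, the remainder is a routine coercivity/Young energy estimate closed by a telescoping sum.
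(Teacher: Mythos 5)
Your proposal is correct and follows essentially the same route as the paper: test the parabolic Stokes projection equation (with the exact solution's time derivative integrated by parts) with $\boldsymbol{v}_h=\boldsymbol{\zeta}_h$, use the orthogonality and endpoint properties of $\mathcal{P}_h$ together with the $L^2$-projection $P_h$ to eliminate the $\boldsymbol{\xi}_h$ time-derivative and jump contributions, then apply coercivity/boundedness of $a_h$ with Young's inequality and sum the telescoping estimate using $\zeta_h(t_0^-)=0$. The paper compresses all of this into a single paragraph; your write-up simply supplies the details (including the check that $\mathcal{P}_h u$ is discretely divergence free so that $\boldsymbol{\zeta}_h$ is admissible and the pressure term is inert), which matches the intended argument.
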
	
	\begin{proof}
		Our starting point will be the definition of the parabolic Stokes 
		projection~\cref{eq:stokes_proj}. We will introduce the splitting  
		$\boldsymbol{u} - \boldsymbol{\Pi}_h 
		u = \boldsymbol{\xi}_h + \boldsymbol{\zeta}_h$, where 
		$\boldsymbol{\xi}_h = (\xi_h,\bar{\xi}_h) $ and $\boldsymbol{\zeta}_h = 
		(\zeta_h,\bar{\zeta}_h)$. Testing \cref{eq:stokes_proj} with 
		$\boldsymbol{v}_h = \boldsymbol{\zeta}_h \in 
		\mathcal{V}_h^{\text{div}} 
		\times \bar{\mathcal{V}}_h$, integrating by parts in time, 
		using the defining properties of the projection $P_h$ 
		\Cref{def:space_time_kernel_proj}, the coercivity and boundedness of $a_h(\cdot,\cdot)$ \cref{eq:ah_coer_bnd}, the Cauchy--Schwarz inequality
		and Young's inequality with some sufficiently small $\epsilon > 0$, we have
		\begin{equation*}
			\begin{split}
				& 
				\norm[1]{\zeta_h(t_{n+1}^-)}_{L^2(\Omega)}^2  + \norm[1]{ 
					\sbr{\zeta_h}_n}_{L^2(\Omega)}^2 - 
				\norm[1]{\zeta_h(t_{n}^-)}_{L^2(\Omega)}^2
				+ C \nu \int_{I_n} 
				\tnorm{\boldsymbol{\zeta}_h}_v^2 
				\dif t  \le  
				C \nu \int_{I_n}   
				\tnorm{\boldsymbol{\xi}_h}_{v'}^2  \dif t.
			\end{split}
		\end{equation*}
		We conclude by summing over all space-time slabs and noting that $\zeta_h(t_{0}^-) = 0.$
	\end{proof}
	
	\subsection{Error analysis for the velocity}
	
	\subsubsection{The error equation}
	
	We introduce the notation $\boldsymbol{e}_h = (e_h,\bar{e}_h) = (u-u_h,\gamma(u) 
	- \bar{u}_h)$. From \Cref{lem:consistency}, we have the following 
	Galerkin 
	orthogonality
	result:
	\begin{equation} \label{eq:err_eq_inter}
		\begin{split}
			&-\int_{I_n} (e_h, \partial_t v_h)_{\mathcal{T}_h} \dif t + 
			(e_{n+1}^-,v_{n+1}^-)_{\mathcal{T}_h} +
			\nu \int_{I_n} a_h(\boldsymbol{e}_h, \boldsymbol{v}_h) \dif t + 
			\int_{I_n} 
			b_h(\boldsymbol{p} - \boldsymbol{p}_h, v_h)  \dif t \\ &+
			\int_{I_n} \del{ o_h(u; \boldsymbol{u}, \boldsymbol{v}_h) - o_h(u_h; 
				\boldsymbol{u}_h, \boldsymbol{v}_h)} \dif t - 
			(e_{n}^-,v_{n}^+)_{\mathcal{T}_h} 
			= 0, \quad \forall \boldsymbol{v}_h \in \boldsymbol{\mathcal{V}}_h.
		\end{split}
	\end{equation}
	Introducing the splitting $\boldsymbol{e}_h = \del{\boldsymbol{u} - 
		\boldsymbol{\Pi}_h u
	} +  \del{\boldsymbol{\Pi}_h u - \boldsymbol{u}_h} = 
	\boldsymbol{\eta}_h + 
	\boldsymbol{\theta}_h$, integrating by parts in the first term on the left hand 
	side, using the definition of the parabolic Stokes 
	projection~\cref{eq:stokes_proj}, and
	choosing $\boldsymbol{v}_h = \boldsymbol{\theta}_h \in 
	\mathcal{V}_h^{\text{div}} 
	\times 
	\bar{\mathcal{V}}_h$,
	\cref{eq:err_eq_inter} reduces to 
	\begin{equation}\label{eq:error_eq}
		\int_{I_n} (\partial_t \theta_h, \theta_h)_{\mathcal{T}_h} \dif t +
		\nu \int_{I_n} a_h(\boldsymbol{\theta}_h, \boldsymbol{\theta}_h) \dif t + 
		\del{\theta_{n}^+ - \theta_{n}^-,\theta_{n}^+}_{\mathcal{T}_h} =
		-\int_{I_n} \del{ o_h(u; \boldsymbol{u}, \boldsymbol{\theta}_h) - o_h(u_h; 
			\boldsymbol{u}_h, \boldsymbol{\theta}_h)} \dif t,
	\end{equation}
	where we have used that $u_h, \Pi_h u \in P_k(I_n, H)$.
	
	\begin{lem} \label{eq:L2L2_bnd}
		Let $(\Pi_h u , \bar{\Pi}_h u) \in \mathcal{V}_h^{\text{div}} \times \bar{\mathcal{V}}_h$ be the velocity pair of
		the Stokes projection \cref{eq:stokes_proj} and let $(u_h,\bar u_h) \in 
		\boldsymbol{\mathcal{V}}_h$ be an
		approximate velocity solution 
		to the Navier--Stokes system computed using the space-time HDG scheme
		\cref{eq:discrete_problem} for $n= 0, \dots, N-1$.
		Let $\theta_h =  \Pi_h u - u_h $, $\eta_h = u - 
		\Pi_h u$,
		$\bar{\theta}_h = \bar{\Pi}_h u - u_h$ and 
		$\bar{\eta}_h = u - \bar{\Pi}_h u$.
		There exists a constant $C>0$ such that
		
		\begin{equation*} 
			\begin{split}
				&\int_{I_n} \norm{\theta_h}_{L^2(\Omega)}^2 \dif t  \le  
				C \del[2]{ \nu^{1/2} \Delta t^{1/2} \int_{I_n} 
					\tnorm{\boldsymbol{\theta}_h}_v^2 \dif t   +
					\nu \Delta t	 \int_{I_n} 
					\tnorm{\boldsymbol{\eta}_h}_v^2  \dif t}  +
				\Delta t  \norm[1]{\theta_n^-}_{L^2(\Omega)}^2.
			\end{split}
		\end{equation*}
	\end{lem}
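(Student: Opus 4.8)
The plan is to exploit the error equation \cref{eq:error_eq} together with the discrete characteristic function technique, much as in the proof of \Cref{thm:uniqueness_HO} above. Since $\theta_h \in P_k(I_n;V_h^{\text{div}})$, we may test \cref{eq:error_eq} with the discrete characteristic function $\boldsymbol{\theta}_\chi$ of $\boldsymbol{\theta}_h$ associated to the point $s \in I_n$ where $\sup_{t \in I_n}\norm{\theta_h(t)}_{L^2(\Omega)}$ is (essentially) attained. Recalling the definition \cref{eq:disc_char_function}, the time-derivative term $\int_{I_n}(\partial_t\theta_h,\theta_\chi)_{\mathcal{T}_h}\dif t$ telescopes into boundary contributions at the partition points, producing $\tfrac12\sup_{t\in I_n}\norm{\theta_h(t)}_{L^2(\Omega)}^2 - \tfrac12\norm{\theta_n^-}_{L^2(\Omega)}^2$ (up to handling the jump term $(\theta_n^+-\theta_n^-,\theta_n^+)_{\mathcal{T}_h}$, which is nonnegative after the usual algebraic identity and can be dropped or absorbed). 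This reduces the task to estimating $\nu\int_{I_n}a_h(\boldsymbol{\theta}_h,\boldsymbol{\theta}_\chi)\dif t$ and the convection difference $\int_{I_n}(o_h(u;\boldsymbol{u},\boldsymbol{\theta}_\chi)-o_h(u_h;\boldsymbol{u}_h,\boldsymbol{\theta}_\chi))\dif t$.

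First I would bound the viscous term: by the continuity of $a_h(\cdot,\cdot)$ in \cref{eq:ah_coer_bnd} and the stability bound \cref{eq:bnd_char} for the discrete characteristic function, $\nu\int_{I_n}|a_h(\boldsymbol{\theta}_h,\boldsymbol{\theta}_\chi)|\dif t \le C\nu\int_{I_n}\tnorm{\boldsymbol{\theta}_h}_{v'}\tnorm{\boldsymbol{\theta}_\chi}_{v'}\dif t \le C\nu\int_{I_n}\tnorm{\boldsymbol{\theta}_h}_v^2\dif t$ after using norm equivalence on $\boldsymbol{V}_h$ and \cref{eq:bnd_char}. Then I would split the convection difference as $o_h(u;\boldsymbol{u},\cdot)-o_h(u_h;\boldsymbol{u}_h,\cdot) = o_h(u;\boldsymbol{\eta}_h+\boldsymbol{\theta}_h,\cdot) + o_h(u-u_h;\boldsymbol{u}_h,\cdot)$ (expanding $\boldsymbol{u}-\boldsymbol{u}_h=\boldsymbol{\eta}_h+\boldsymbol{\theta}_h$ in the middle slot and using linearity, plus Lipschitz continuity in the first slot \cref{eq:ohLip} for the last piece), and bound each term using \cref{eq:ohLip} and the uniform $L^\infty(0,T;L^2)$ bounds on $u_h$ (\Cref{lem:linfty_bnd}) and on $\boldsymbol{\Pi}_h u$ (\Cref{lem:stokes_proj_bnd}), together with \cref{eq:bnd_char}. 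The resulting bounds will involve $\int_{I_n}\tnorm{\boldsymbol{\theta}_h}_v\tnorm{\boldsymbol{\theta}_\chi}_v\dif t$ and $\int_{I_n}\tnorm{\boldsymbol{\eta}_h}_v\tnorm{\boldsymbol{\theta}_\chi}_v\dif t$, each controlled by $\tnorm{\cdot}_v$-norms via Cauchy--Schwarz in time and \cref{eq:bnd_char}, producing terms of the form $C\int_{I_n}\tnorm{\boldsymbol{\theta}_h}_v^2\dif t$ and $C\int_{I_n}\tnorm{\boldsymbol{\eta}_h}_v^2\dif t$.

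At this stage I would have $\sup_{t\in I_n}\norm{\theta_h(t)}_{L^2(\Omega)}^2 \le \norm{\theta_n^-}_{L^2(\Omega)}^2 + C\nu\int_{I_n}\tnorm{\boldsymbol{\theta}_h}_v^2\dif t + C\nu\int_{I_n}\tnorm{\boldsymbol{\eta}_h}_v^2\dif t$ (after choosing constants appropriately and recalling the small-data assumption to control the convection constants). The final step is to convert the $\sup$ on the left into the desired $\int_{I_n}\norm{\theta_h}_{L^2(\Omega)}^2\dif t$ on the left: here I would apply the time-scaling inequality \cref{eq:time_scaling} with $p=2$, namely $\int_{I_n}\norm{\theta_h}_{L^2(\Omega)}^2\dif t \le \Delta t\,\sup_{t\in I_n}\norm{\theta_h(t)}_{L^2(\Omega)}^2$ (equivalently, bound the $L^2(I_n)$ norm by $\Delta t^{1/2}$ times the $L^\infty(I_n)$ norm). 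Multiplying the displayed bound by $\Delta t$ yields exactly the claimed estimate, with the $\nu\Delta t$ factor on the $\tnorm{\boldsymbol{\eta}_h}_v^2$ term and the $\nu^{1/2}\Delta t^{1/2}$ factor on the $\tnorm{\boldsymbol{\theta}_h}_v^2$ term (the asymmetric powers of $\Delta t$ and $\nu$ arising from an additional application of Young's inequality to split the $\tnorm{\boldsymbol{\theta}_h}_v\tnorm{\boldsymbol{\theta}_\chi}_v$ cross term so as to keep the right-hand side in terms of $\int_{I_n}\tnorm{\boldsymbol{\theta}_h}_v^2\dif t$ scaled by $\nu^{1/2}\Delta t^{1/2}$), plus the $\Delta t\norm{\theta_n^-}_{L^2(\Omega)}^2$ term. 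The main obstacle I anticipate is the careful bookkeeping of the convection term: one must use the uniform $L^\infty(0,T;L^2)$ bounds on both $u_h$ and $\Pi_h u$ (which is precisely why \Cref{lem:linfty_bnd} and \Cref{lem:stokes_proj_bnd} were established) and the discrete Ladyzhenskaya / Sobolev embeddings of \Cref{ss:scale_embed}, and one must distribute the powers of $\nu$ and $\Delta t$ so that after multiplying through by $\Delta t$ the $\tnorm{\boldsymbol{\theta}_h}_v^2$ term carries the stated $\nu^{1/2}\Delta t^{1/2}$ weight rather than $\nu\Delta t$; getting this scaling right is what makes the subsequent Gr\"onwall argument in \Cref{sec:vel_err} work under the restriction $\Delta t\lesssim \nu$.
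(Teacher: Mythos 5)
Your overall strategy is genuinely different from the paper's, and as written it does not deliver the stated estimate. The paper does not test with the characteristic function of $\boldsymbol{\theta}_h$ and then multiply a $\sup$-bound by $\Delta t$; instead, for each fixed $s\in I_n$ it tests \cref{eq:err_eq_inter} with the characteristic function $\boldsymbol{z}_\chi=\varphi(t)\boldsymbol{z}_h$ of a \emph{time-constant} $\boldsymbol{z}_h$, obtains the pointwise identity $(\theta_h(s),z_h)_{\mathcal{T}_h}=-\int_{I_n}\del{o_h(u;\boldsymbol{u},\boldsymbol{z}_\chi)-o_h(u_h;\boldsymbol{u}_h,\boldsymbol{z}_\chi)+\nu a_h(\boldsymbol{\theta}_h,\boldsymbol{z}_\chi)}\dif t+(\theta_n^-,z_h)_{\mathcal{T}_h}$, then chooses $z_h=\theta_h(s)$ and integrates over $s\in I_n$. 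The point of the constant-in-time test function is that $\tnorm{\boldsymbol{z}_\chi(t)}_v\le C\tnorm{\boldsymbol{z}_h}_v$ pointwise in time by \cref{eq:const_linf_bnd_char}, so every convection integral contains only \emph{two} genuinely time-dependent factors and can be handled by Cauchy--Schwarz in time. With your choice $\boldsymbol{\theta}_\chi$, the term involving $u_h$ is a product of three time-dependent factors, and $\tnorm{\boldsymbol{u}_h}_v$ is controlled only in $L^2$ of time (by \Cref{lem:energy_stability}); there is no $L^\infty$-in-time bound on it. Hence your displayed intermediate estimate $\sup_{t\in I_n}\norm{\theta_h(t)}_{L^2(\Omega)}^2\le\norm{\theta_n^-}^2_{L^2(\Omega)}+C\nu\int_{I_n}\tnorm{\boldsymbol{\theta}_h}_v^2\dif t+C\nu\int_{I_n}\tnorm{\boldsymbol{\eta}_h}_v^2\dif t$ is not obtainable by the tools you cite: one must pay an inverse factor $\Delta t^{-1/2}$ (via \cref{eq:time_scaling} with $p=\infty$ applied to $\boldsymbol{\theta}_\chi$), and this---not ``an additional application of Young's inequality''---is the true origin of the $\nu^{1/2}\Delta t^{1/2}$ weight in the lemma.

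The second, more damaging issue is your splitting of the nonlinearity, $o_h(u;\boldsymbol{u},\cdot)-o_h(u_h;\boldsymbol{u}_h,\cdot)=o_h(u;\boldsymbol{\eta}_h+\boldsymbol{\theta}_h,\cdot)+o_h(u-u_h;\boldsymbol{u}_h,\cdot)$, which pairs $\eta_h$ with $u_h$. By \cref{eq:ohLip} this produces $\int_{I_n}\tnorm{\boldsymbol{\eta}_h}_v\tnorm{\boldsymbol{u}_h}_v\tnorm{\boldsymbol{\theta}_\chi}_v\dif t$, and with only $L^2$-in-time control of $\tnorm{\boldsymbol{u}_h}_v$ the best you can reach (after multiplying by $\Delta t$) is $\nu^{1/2}\Delta t^{1/2}\del[1]{\int_{I_n}\tnorm{\boldsymbol{\eta}_h}_v^2\dif t}^{1/2}\del[1]{\int_{I_n}\tnorm{\boldsymbol{\theta}_h}_v^2\dif t}^{1/2}$; no choice of Young weights turns this into $\nu\Delta t\int_{I_n}\tnorm{\boldsymbol{\eta}_h}_v^2\dif t$ while keeping an absorbable coefficient on $\int_{I_n}\tnorm{\boldsymbol{\theta}_h}_v^2\dif t$. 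The paper's four-term splitting $o_h(u;\boldsymbol{\eta}_h,\cdot)+o_h(\eta_h;\boldsymbol{\Pi}_hu,\cdot)+o_h(\theta_h;\boldsymbol{\Pi}_hu,\cdot)+o_h(u_h;\boldsymbol{\theta}_h,\cdot)$ is essential: $\eta_h$ is paired only with $u$ and $\Pi_hu$, whose relevant norms are bounded uniformly in time by $C\nu$ (\Cref{thm:strong_solution} and \Cref{lem:stokes_proj_bnd}), giving the $\nu\Delta t$ weight on the $\eta$-term, while only the $\theta_h$-terms meet $u_h$ and inherit the weaker $\nu^{1/2}$ scaling. Finally, your appeal to \Cref{lem:linfty_bnd} is both unnecessary and restrictive: that bound is proved only for $d=2$, whereas this lemma feeds the error analysis in $d\in\cbr{2,3}$; the paper's proof uses only \cref{eq:ohLip}, the energy estimate, and the small-data assumption, and is dimension-independent. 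Your $\sup$-route could likely be repaired with the paper's splitting and the inverse-in-time estimate, but as proposed the bookkeeping does not close.
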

	
	\begin{proof}
		We will proceed as in the proof of \cite[Theorem 5.2]{Chrysafinos:2010}.
		Choose $\boldsymbol{z}_h \in V_h^{\text{div}} \times \bar{V}_h$ independent of time.
		We test \cref{eq:err_eq_inter} with the discrete characteristic function $\boldsymbol{z}_{\chi} \in V_h^{\text{div}} \times \bar{V}_h$
		of $\boldsymbol{z}_h$. Recall from \Cref{eq:bnd_char} that we can write $\boldsymbol{z}_{\chi} = \varphi(t) \boldsymbol{z}_h$,
		with $\varphi(t)$ satisfying $\varphi(t_{n}^+) = 1$ as well as \cref{eq:const_int_char} and \cref{eq:const_linf_bnd_char}. Then,
		we have
		\begin{equation} \label{eq:l2_bnd_theta_inter1}
			\begin{split}
				(\theta_h(s), z_h)_{\mathcal{T}_h}  &=
				-\int_{I_n} \del{ o_h(u; \boldsymbol{u}, \boldsymbol{z}_{\chi}) - o_h(u_h; 
					\boldsymbol{u}_h,  \boldsymbol{z}_{\chi}) + \nu 
					a_h(\boldsymbol{\theta}_h, 
					\boldsymbol{z}_{\chi})} \dif t  + \del[1]{ 
					\theta_{n}^-,z_h}_{\mathcal{T}_h}.
			\end{split}
		\end{equation}
		By the boundedness of $a_h(\cdot,\cdot)$ \cref{eq:ah_coer_bnd}, the bound 
		on $\varphi$ 
		\cref{eq:const_linf_bnd_char}, and the Cauchy--Schwarz inequality, 
		\begin{equation} \label{eq:a_const_char_bnd}
			\begin{split}
				\int_{I_n} |a_h(\boldsymbol{\theta}_h, 
				\boldsymbol{z}_{\chi}) |\dif t  \le C\Delta t^{1/2} 
				\tnorm{\boldsymbol{z}_h}_v \del[2]{ \int_{I_n} 
					\tnorm{\boldsymbol{\theta}_h}_v^2 \dif t}^{1/2}.
			\end{split}
		\end{equation}
		After a few algebraic manipulations, we apply \cref{eq:ohLip}, followed by \cref{eq:const_linf_bnd_char}, the energy 
		estimate \Cref{lem:energy_stability}, the assumption 
		\cref{eq:small_data_2} on the problem data, and the Cauchy--Schwarz 
		inequality, to find
		\begin{equation} \label{eq:o_const_char_bnd_1}
			\begin{split}
				&\int_{I_n} | o_h(u; \boldsymbol{u}, \boldsymbol{z}_{\chi}) - o_h(u_h; 
				\boldsymbol{u}_h, \boldsymbol{z}_{\chi})| \dif t \\ 
				& = \int_{I_n} |o_h(u; \boldsymbol{\eta}_h, 
				\boldsymbol{z}_{\chi}) + 
				o_h(\eta_h; \boldsymbol{\Pi}_h u,  \boldsymbol{z}_{\chi}) + o_h(u_h; 
				\boldsymbol{\theta}_h, 
				\boldsymbol{z}_{\chi}) - o_h(\theta_h; \boldsymbol{\Pi}_h u, 
				\boldsymbol{z}_{\chi})| \dif t
				\\&\le
				C \tnorm{\boldsymbol{z}_h}_v \int_{I_n} \del{
					\norm{u}_{H^1(\Omega)}\tnorm{\boldsymbol{\eta}_h}_v 
					+ \tnorm{\boldsymbol{\eta}_h}_v \tnorm{\boldsymbol{\Pi}_h u}_v 
					+ \tnorm{\boldsymbol{u}_h}_v
					\tnorm{\boldsymbol{\theta}_h}_v + 
					\tnorm{\boldsymbol{\Pi}_h u}_v \tnorm{\boldsymbol{\theta}_h}_v 
				} 
				\dif t \\
				& \le C  \nu^{1/2} \tnorm{\boldsymbol{z}_h}_v \del[3]{ \nu^{1/2}\Delta t^{1/2} \del[2]{ 
						\int_{I_n} 
						\tnorm{\boldsymbol{\eta}_h}_v^2  \dif t }^{1/2} +  (\nu^{1/2}\Delta t^{1/2}+1)
					\del[2]{ \int_{I_n} 
						\tnorm{\boldsymbol{\theta}_h}_v^2  \dif t }^{1/2} }.
			\end{split}
		\end{equation}
		Combining \cref{eq:l2_bnd_theta_inter1}, \cref{eq:a_const_char_bnd}, and 
		\cref{eq:o_const_char_bnd_1},
		\begin{equation} \label{eq:l2_bnd_theta_inter2}
			\begin{split}
				(\theta_h(s), z_h)_{\mathcal{T}_h} & \le  C 
				\tnorm{\boldsymbol{z}_h}_v \del[2]{ \nu \Delta t^{1/2} + 
					\nu^{1/2}
				}\del[2]{\int_{I_n} 
					\tnorm{\boldsymbol{\theta}_h}_v^2 \dif t}^{1/2}  \\& \; +
				C
				\tnorm{\boldsymbol{z}_h}_v \nu \Delta t^{1/2}	\del[2]{ \int_{I_n} 
					\tnorm{\boldsymbol{\eta}_h}_v^2  \dif t }^{1/2} +
				\del[1]{
					\theta_{n}^-,z_h}_{\mathcal{T}_h}.
			\end{split}
		\end{equation}
		This holds for any $\boldsymbol{z}_h \in V_h^{\text{div}} \times \bar{V}_h$, so fix 
		$s \in I_n$ and select $\boldsymbol{z}_h =  (\theta_h(s), 
		\bar{\theta}_h(s)) 
		\in \boldsymbol{V}_h$ to find 
		\begin{equation} \label{eq:l2_bnd_theta_inter4}
			\begin{split}
				\norm{\theta_h(s)}_{L^2(\Omega)}^2  \le C \del[2]{ \nu \Delta 
					t^{1/2} + 
					\nu^{1/2}
				}
				\tnorm{\boldsymbol{\theta}_h(s)}_v \del[2]{\int_{I_n} 
					\tnorm{\boldsymbol{\theta}_h}_v^2 \dif t}^{1/2} +  \\
				C \nu \Delta t^{1/2} 
				\tnorm{\boldsymbol{\theta}_h(s)}_v	\del[2]{ \int_{I_n} 
					\tnorm{\boldsymbol{\eta}_h}_v^2  \dif t }^{1/2} +
				\del[1]{\theta_{n}^-,\theta_h(s)}_{\mathcal{T}_h}.
			\end{split}
		\end{equation}
		This holds for all $s \in I_n$, so the result follows after integrating both sides over $I_n$ 
		and applying the
		Cauchy--Schwarz inequality and Young's inequality. 
	\end{proof}

	\begin{lem} \label{lem:err_eq_bnd}
		Let $u \in L^{\infty}(0,T;V)\cap L^2(0,T;V\cap 
		H^2(\Omega)^d) \cap H^1(0,T;H)$ be the strong solution to the continuous 
		Navier--Stokes problem, let $(\Pi_h u , \bar{\Pi}_h u) \in \mathcal{V}_h^{\text{div}} \times \bar{\mathcal{V}}_h$ be the velocity pair of
		the Stokes projection \cref{eq:stokes_proj}, and let $(u_h,\bar u_h) \in 
		\boldsymbol{\mathcal{V}}_h$ be an
		approximate velocity solution 
		to the Navier--Stokes system computed using the space-time HDG scheme
		\cref{eq:discrete_problem} for $n= 0, \dots, N-1$.
		Let $\theta_h = \Pi_h u - u_h$, $\eta_h = u - 
		\Pi_h u$,
		$\bar{\theta}_h = \bar{\Pi}_h u - \bar{u}_h$ and 
		$\bar{\eta}_h = u - \bar{\Pi}_h u$.
		There exists a constant $C>0$ such that
		\begin{equation*}
			\begin{split}
				\norm[1]{\theta_{N}^-}_{L^2(\Omega)}^2 + \sum_{n=0}^{N-1}
				\norm[1]{ 
					\sbr{\theta_h}_n}_{L^2(\Omega)}^2   &+
				\nu \int_{0}^{T} \tnorm{ 
					\boldsymbol{\theta}_h}_v^2 \dif t 
				\le C\exp \del[2]{CT}
				\nu \Delta t \int_{0}^{T} 
				\tnorm{\boldsymbol{\eta}_h}_v^2 \dif t,
			\end{split} 
		\end{equation*}
		provided the time step satisfies $\Delta t \le C\nu$.
	\end{lem}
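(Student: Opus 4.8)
The plan is to convert the error equation~\eqref{eq:error_eq} into a per-slab energy inequality for $\boldsymbol{\theta}_h$ that can be summed and closed by a discrete Gr\"onwall argument. First I would integrate the temporal-derivative term in~\eqref{eq:error_eq} by parts on $I_n$: since $\int_{I_n}(\partial_t\theta_h,\theta_h)_{\mathcal{T}_h}\dif t=\tfrac12\norm{\theta_{n+1}^-}_{L^2(\Omega)}^2-\tfrac12\norm{\theta_n^+}_{L^2(\Omega)}^2$, combining with the jump contribution $(\theta_n^+-\theta_n^-,\theta_n^+)_{\mathcal{T}_h}$ produces the familiar DG-in-time terms $\tfrac12\norm{\theta_{n+1}^-}_{L^2(\Omega)}^2+\tfrac12\norm{\sbr{\theta_h}_n}_{L^2(\Omega)}^2-\tfrac12\norm{\theta_n^-}_{L^2(\Omega)}^2$, while the coercivity half of~\eqref{eq:ah_coer_bnd} bounds the viscous term below by $C\nu\int_{I_n}\tnorm{\boldsymbol{\theta}_h}_v^2\dif t$. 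Everything then reduces to estimating $\int_{I_n}\envert{o_h(u;\boldsymbol{u},\boldsymbol{\theta}_h)-o_h(u_h;\boldsymbol{u}_h,\boldsymbol{\theta}_h)}\dif t$.

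Next I would decompose the convective difference. Using bilinearity of $o_h$ in its last two arguments together with $\boldsymbol{u}=\boldsymbol{\Pi}_hu+\boldsymbol{\eta}_h$ and $\boldsymbol{u}_h=\boldsymbol{\Pi}_hu-\boldsymbol{\theta}_h$,
\[
o_h(u;\boldsymbol{u},\boldsymbol{\theta}_h)-o_h(u_h;\boldsymbol{u}_h,\boldsymbol{\theta}_h)=\big(o_h(u;\boldsymbol{\Pi}_hu,\boldsymbol{\theta}_h)-o_h(u_h;\boldsymbol{\Pi}_hu,\boldsymbol{\theta}_h)\big)+o_h(u;\boldsymbol{\eta}_h,\boldsymbol{\theta}_h)+o_h(u_h;\boldsymbol{\theta}_h,\boldsymbol{\theta}_h).
\]
Since $u_h\in\mathcal{V}_h^{\text{div}}$, the identity~\eqref{eq:ohequals} gives $o_h(u_h;\boldsymbol{\theta}_h,\boldsymbol{\theta}_h)\ge0$, so that term moves to the left-hand side and is discarded. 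The first bracket is a difference in the first slot of $o_h$, which I bound with the Lipschitz estimate~\eqref{eq:ohLip} after splitting $u-u_h=\eta_h+\theta_h$; for the piece in which $\theta_h$ survives in the first slot I would instead use the sharp bound~\eqref{eq:o_space_bnd_2} (valid for $d\in\cbr{2,3}$), which is precisely the estimate the remark preceding~\Cref{lem:o_space_bnd} flags as essential here. The term $o_h(u;\boldsymbol{\eta}_h,\boldsymbol{\theta}_h)$ is controlled by the continuity of $o_h$ (the $w_2=0$ case of~\eqref{eq:ohLip}, with $\norm{u}_{1,h}\lesssim\norm{u}_{H^1(\Omega)}$ since $u$ is smooth). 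I then insert the uniform a priori bounds: $\norm{u}_{L^\infty(0,T;V)}^2\lesssim\nu^2$ from~\Cref{thm:strong_solution}, the uniform control $\norm{\boldsymbol{\Pi}_hu}_{L^\infty(0,T;\boldsymbol{\mathcal{V}}_h)}^2\le C(u,u_0,\nu)$ from~\Cref{lem:stokes_proj_bnd}, and the energy bound of~\Cref{lem:energy_stability} for $\boldsymbol{u}_h$. Together with Young's inequality these reduce every term to one of three shapes: a small multiple $\epsilon\nu\int_{I_n}\tnorm{\boldsymbol{\theta}_h}_v^2\dif t$ absorbed into the viscous dissipation; a "data'' term proportional to $\nu\int_{I_n}\tnorm{\boldsymbol{\eta}_h}_v^2\dif t$; a Gr\"onwall-friendly term $C\int_{I_n}\tnorm{\boldsymbol{\Pi}_hu}_v\tnorm{\boldsymbol{\theta}_h}_v^2\dif t\le C\norm{\boldsymbol{\Pi}_hu}_{L^\infty(0,T;\boldsymbol{\mathcal{V}}_h)}\int_{I_n}\tnorm{\boldsymbol{\theta}_h}_v^2\dif t$; or a term of the form $C\int_{I_n}\norm{\theta_h}_{L^2(\Omega)}^2\dif t$.

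The last shape is where~\Cref{eq:L2L2_bnd} enters: it lets me replace $\int_{I_n}\norm{\theta_h}_{L^2(\Omega)}^2\dif t$ by $C\nu^{1/2}\Delta t^{1/2}\int_{I_n}\tnorm{\boldsymbol{\theta}_h}_v^2\dif t+C\nu\Delta t\int_{I_n}\tnorm{\boldsymbol{\eta}_h}_v^2\dif t+C\Delta t\norm{\theta_n^-}_{L^2(\Omega)}^2$, where the first summand can be absorbed into $C\nu\int_{I_n}\tnorm{\boldsymbol{\theta}_h}_v^2\dif t$ only once $\Delta t\le C\nu$ — this is exactly where the time-step restriction is used — the second augments the data term, and the third is of Gr\"onwall type. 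Collecting everything yields a per-slab inequality of the form
\[
\norm{\theta_{n+1}^-}_{L^2(\Omega)}^2+\norm{\sbr{\theta_h}_n}_{L^2(\Omega)}^2+\nu\int_{I_n}\tnorm{\boldsymbol{\theta}_h}_v^2\dif t\le(1+C\Delta t)\norm{\theta_n^-}_{L^2(\Omega)}^2+C\nu\Delta t\int_{I_n}\tnorm{\boldsymbol{\eta}_h}_v^2\dif t,
\]
and summing over $n=0,\dots,N-1$ (recall $\theta_0^-=0$) followed by the discrete Gr\"onwall inequality produces the claimed bound with the factor $\exp(CT)$.

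The main obstacle is the convective term. Because $o_h$ is not skew-symmetric in its last two arguments, and — unlike the two-dimensional situation of~\Cref{lem:linfty_bnd} — no $L^\infty(0,T;L^2(\Omega)^d)$ bound on $u_h$ is available when $d=3$, the energy-type cancellation must instead be routed through the sign property~\eqref{eq:ohequals} applied to $o_h(u_h;\boldsymbol{\theta}_h,\boldsymbol{\theta}_h)$, the missing uniform control of $u_h$ must be substituted by the uniform bound on $\boldsymbol{\Pi}_hu$ from~\Cref{lem:stokes_proj_bnd}, and the remaining cubic contributions must be handled with the sharp broken-Ladyzhenskaya estimate~\eqref{eq:o_space_bnd_2} together with~\Cref{eq:L2L2_bnd}. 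Keeping the powers of $\nu$ and $\Delta t$ in balance so that all these contributions are either absorbed into the viscous dissipation or put in Gr\"onwall form — which forces $\Delta t\lesssim\nu$ — is the delicate bookkeeping at the heart of the proof.
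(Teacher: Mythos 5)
Your proposal is correct and follows essentially the same route as the paper: starting from \cref{eq:error_eq}, dropping $o_h(u_h;\boldsymbol{\theta}_h,\boldsymbol{\theta}_h)\ge 0$ via \cref{eq:ohequals}, splitting the convective difference into exactly the paper's three terms, bounding the two $\eta$-type terms with \cref{eq:ohLip} and the small-data/Stokes-projection bounds, treating the cubic term with \cref{eq:o_space_bnd_2}, Young's inequality and \Cref{lem:stokes_proj_bnd}, controlling $\int_{I_n}\norm{\theta_h}_{L^2(\Omega)}^2\dif t$ through \Cref{eq:L2L2_bnd} (which is where $\Delta t\lesssim\nu$ enters), and closing with summation and the discrete Gr\"onwall inequality. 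The only loose point is labelling $C\int_{I_n}\tnorm{\boldsymbol{\Pi}_h u}_v\tnorm{\boldsymbol{\theta}_h}_v^2\dif t$ ``Gr\"onwall-friendly'' (such a dissipation-weighted term cannot be handled by the discrete Gr\"onwall argument and in fact never arises once the Young parameters are chosen as you otherwise do), but this does not affect the validity of your argument.
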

	
	\begin{proof}
		Our starting point for deriving an error estimate for the velocity will be 
		the 
		error equation~\cref{eq:error_eq}.
		We begin by bounding the nonlinear convection terms. A few algebraic 
		manipulations yield
		\begin{equation*}
			\begin{split}
				&-\int_{I_n}  ( o_h(u; \boldsymbol{u}, 
				\boldsymbol{\theta}_h) - 
				o_h(u_h; 
				\boldsymbol{u}_h, \boldsymbol{\theta}_h)) \dif t \\
				& \le \int_{I_n}|o_h(u; \boldsymbol{\eta}_h, 
				\boldsymbol{\theta}_h)|\dif t+ 
				\int_{I_n}|o_h(\eta_h; \boldsymbol{\Pi}_h u, 
				\boldsymbol{\theta}_h)| \dif t+ 
				\int_{I_n}|o_h(\theta_h; 
				\boldsymbol{\Pi}_h u, 
				\boldsymbol{\theta}_h)|\dif t = T_1 + T_2 + T_3,
			\end{split}
		\end{equation*}
		where we have used that $o_h(u_h; \boldsymbol{\theta}_h, 
		\boldsymbol{\theta}_h) \ge 0$.
		We now bound $T_1$ and $T_2$. By \cref{eq:ohLip},
		the assumption 
		\cref{eq:small_data_2} on the problem data, and Young's inequality with some $\epsilon_1 > 0$,
		we find
		\begin{equation} \label{eq:theta_nonlinear_1}
			\begin{split}
				\int_{I_n} |o_h(u; \boldsymbol{\eta}_h, \boldsymbol{\theta}_h)| 
				\dif t
				&\le \frac{C\nu}{2\epsilon_1} 
				\int_{I_n} \tnorm{\boldsymbol{\eta}_h}_v^2 \dif t + 
				\frac{\nu\epsilon_1}{2} 
				\int_{I_n} \tnorm{\boldsymbol{\theta}_h}_v^2 \dif t,
			\end{split}
		\end{equation}
		and similarly,
		\begin{equation} \label{eq:theta_nonlinear_2}
			\begin{split}
				\int_{I_n}| o_h(\eta_h; \boldsymbol{\Pi}_h \boldsymbol{u}, 
				\boldsymbol{\theta}_h) |\dif t
				&\le \frac{C\nu}{2\epsilon_1}  \int_{I_n} 
				\tnorm{\boldsymbol{\eta}_h}_v^2 \dif t + \frac{\nu\epsilon_1}{2}
				\int_{I_n} 
				\tnorm{\boldsymbol{\theta}_h}_v^2 \dif t.
			\end{split}
		\end{equation}
		The bound on $T_3$ is more complicated. To begin, we use
		\Cref{lem:o_space_bnd} and H\"{o}lder's inequality with $p=4$ and $q= 4/3$
		to find
		\begin{equation*}
			\begin{split}
				\int_{I_n}|o_h(\theta_h; 
				\boldsymbol{\Pi}_h u, 
				\boldsymbol{\theta}_h)|\dif t
				& \le C \del[2]{\int_{I_n} 
					\norm{\theta_h}_{L^2(\Omega)}^{2} \tnorm{\boldsymbol{\Pi}_h u}_v^4 
					\dif t 
				}^{1/4} \del[2]{ 
					\int_{I_n} \tnorm{\boldsymbol{\theta}_h}_v^{2} \dif t}^{3/4}.
			\end{split}
		\end{equation*}
		Recall Young's inequality in the form
		$ab \le \epsilon_2^{p/q}a^p/p + b^q/(q \epsilon_2)$ where
		$1/p+1/q=1$, $1 < q,p < \infty$, $a, b > 0$, and
		$\epsilon_2 >0$ (see e.g. \cite[Appendix
		A]{John:book}). Choosing $p=4$ and $q = 4/3$ we find
		\begin{equation} \label{eq:T_3_bnd}
			\begin{split}
				\int_{I_n}|o_h(\theta_h; 
				\boldsymbol{\Pi}_h u, 
				\boldsymbol{\theta}_h)|\dif t
				& \le C \del[3]{\frac{\epsilon_2^{3}}{4}  
					\nu^4 \int_{I_n} 
					\norm{\theta_h}_{L^2(\Omega)}^{2} \dif t  
					+ \frac{3}{4 \epsilon_2}
					\int_{I_n} \tnorm{\boldsymbol{\theta}_h}_v^{2} \dif t}.
			\end{split}
		\end{equation}
		Here, we have used
		the uniform bound on the Stokes projection in \Cref{lem:stokes_proj_bnd} and the 
		assumption \cref{eq:small_data_1} on the problem data.
		Next, we consider the error 
		equation \cref{eq:error_eq}.
		Integrating by parts in time on the left hand side of \cref{eq:error_eq},
		combining the result with~\cref{eq:theta_nonlinear_1},~\cref{eq:theta_nonlinear_2},
		\cref{eq:T_3_bnd}, and using the coercivity of $a_h(\cdot,\cdot)$ 
		\cref{eq:ah_coer_bnd}, we have for some constants $C_1,C_2 > 0$:
		\begin{equation} \label{eq:pre_L2_bound_inter}
			\begin{split}
				&	\norm[1]{\theta_{n+1}^-}_{L^2(\Omega)}^2 + \norm[1]{ 
					\sbr{\theta_h}_n}_{L^2(\Omega)}^2 - 
				\norm[1]{\theta_{n}^-}_{L^2(\Omega)}^2 +
				C_1\nu \int_{I_n} \tnorm{ \boldsymbol{\theta}_h}_v^2 \dif t \\ 
				&\le C_2\del[3]{
					\nu 
					\epsilon_1^{-1}\int_{I_n} 
					\tnorm{\boldsymbol{\eta}_h}_v^2 \dif t + 	\epsilon_1\nu
					\int_{I_n} 
					\tnorm{\boldsymbol{\theta}_h}_v^2 \dif t +  
					\epsilon_2^{3} \nu^{4} \int_{I_n} 
					\norm{\theta_h}_{L^2(\Omega)}^{2} \dif t  
					+ \epsilon_2^{-1}
					\int_{I_n} \tnorm{\boldsymbol{\theta}_h}_v^{2} \dif t }.
			\end{split}
		\end{equation}
		Choosing $\epsilon_1 = C_1/(2C_2)$ and $\epsilon_2 = C_3\nu^{-1} $ 
		where $C_3 > 2C_2/C_1$ in 
		\cref{eq:pre_L2_bound_inter}, letting $C_4 = C_1/2 - C_2/C_3 > 0$, and using \Cref{eq:L2L2_bnd}, we have upon 
		rearranging that
		\begin{equation*} 
			\begin{split}
				\norm[1]{\theta_{n+1}^-}_{L^2(\Omega)}^2 &+ \norm[1]{ 
					\sbr{\theta_h}_n}_{L^2(\Omega)}^2 +
				\del{C_4\nu - C_5\nu^{1/2}\Delta t^{1/2}} \int_{I_n} \tnorm{ 
					\boldsymbol{\theta}_h}_v^2 \dif t \\ &\le 
				\del{ 1 + C_5 \Delta t} 
				\norm[1]{\theta_{n}^-}_{L^2(\Omega)}^2  +
				C_5\nu\Delta t\int_{I_n} 
				\tnorm{\boldsymbol{\eta}_h}_v^2 \dif t. 
			\end{split}
		\end{equation*}
		Summing over all space-time slabs and noting that $\theta_{0}^- = 0$, we have
		\begin{equation*} 
			\begin{split}
				\norm[1]{\theta_{N}^-}_{L^2(\Omega)}^2 &+ \sum_{n=0}^{N-1} 
				\norm[1]{ 
					\sbr{\theta_h}_n}_{L^2(\Omega)}^2  +
				\del{C_4\nu - C_5\nu^{1/2}\Delta t^{1/2}} \int_{0}^{T} \tnorm{ 
					\boldsymbol{\theta}_h}_v^2 \dif t
				\\ &\le 
				C_5  \Delta t \del{\sum_{n=0}^{N-1}
					\norm[1]{\theta_{i}^-}_{L^2(\Omega)}^2  +
					\nu \int_{0}^{T} 
					\tnorm{\boldsymbol{\eta}_h}_v^2 \dif t}.
			\end{split}
		\end{equation*}
		The result follows by a discrete Gr\"onwall inequality
		\cite[Lemma 1.11]{Dolejsi:book} for $\Delta t < C_4\nu/(2C_5)$
		and using that
		$ \prod_{j=0}^{N-1} \del[0]{1 + C \Delta t} \le \exp
		\del[0]{C\sum_{j=0}^{N-1}\Delta t} \le \exp \del[0]{CT}$.
	\end{proof}
	\subsection{Proof of \Cref{thm:vel_error_estimates}}
	\begin{proof}
		Let $\boldsymbol{e}_h = \boldsymbol{u} - \boldsymbol{u}_h$. We introduce the splitting $\boldsymbol{e}_h =\boldsymbol{\xi}_h + \boldsymbol{\zeta}_h + \boldsymbol{\theta}_h$,
		where $\boldsymbol{\theta}_h = \boldsymbol{\Pi}_h u - \boldsymbol{u}_h$, 
		$\boldsymbol{\zeta}_h = \boldsymbol{\mathcal{P}}_h u - \boldsymbol{\Pi}_h u$, and $\boldsymbol{\xi}_h  = \boldsymbol{u} - \boldsymbol{\mathcal{P}}_h u$.
		Using the triangle inequality, \Cref{lem:err_eq_bnd}, \Cref{lem:disc_err}, and noting that
		$\sbr{\xi_h}_n = 0$ for $n=0,\dots,N-1$, we find
		there exists a constant $C>0$ such that
		\begin{multline}
			\label{eq:err_bnd_inter2}
			\norm[1]{e_{N}^-}_{L^2(\Omega)}^2  + \sum_{n=0}^{N-1} 
			\norm{\sbr{e_h}_n}_{L^2(\Omega)}^2 + \sum_{n=0}^{N-1}
			\nu \int_{0}^{T} \tnorm{ \boldsymbol{e}_h}_{v'}^2 \dif t 
			\\
			\le \exp 
			\del[2]{CT}\del{ 
				\norm[1]{\xi_{N}^-}_{L^2(\Omega)}^2  +	
				\nu
				\int_0^{T} \tnorm{\boldsymbol{\xi}_h}_{v'}^2  \dif t}.
		\end{multline}
		To bound the last term on the right hand side of 
		\cref{eq:err_bnd_inter2}, we employ \Cref{thm:projection_estimates} to
		find 
		\begin{equation}
			\int_0^{T} \tnorm{\boldsymbol{\xi}_h}_{v'}^2  \dif t \lesssim h^{2k} 
			\norm{u}_{L^2(0,T,H^{k+1}(\Omega))}^2 + \Delta t^{2k+2} 
			\norm{u}_{H^{k+1}(0,T,H^2(\Omega))}^2.
		\end{equation}
		The result will follow after bounding $\norm[0]{\xi_{N}^-}_{L^2(\Omega)}$. By 
		\Cref{lem:approx_prop_spat_proj}, there exists a constant $C>0$ such 
		that
		\begin{equation*}
			\norm[1]{\xi_{N}^-}_{L^2(\Omega)}^2 =	\norm[1]{u(T) - 
				(P_{h} u) 
				(T)}_{L^2(\Omega)}^2 \lesssim h^{2k+2} 
			\norm{u}_{C(0,T;H^{k+1}(\Omega))}^2.
		\end{equation*}
	\end{proof}

	\section{Numerical results}
	\label{sec:numerical_results}
	
	In this section, we consider a simple test case with a manufactured solution
	to verify the theoretical results of the previous sections. We 
	solve the 
	Navier--Stokes equations on the space-time domain 
	$\Omega \times [0,T] = 
	[0,1]^3$. We impose Dirichlet boundary conditions along the boundaries $x = 0$, 
	$x = 1$, $y = 0$, and Neumann boundary conditions along $y = 1$. We choose the 
	problem data such that the exact solution is given by
	\begin{equation*}
		u = 
		\begin{bmatrix}
			2 + \sin(2\pi (x - t)) \sin(2\pi (y - t))\\
			2 + \cos(2\pi (x - t)) \cos(2\pi (y - t))
		\end{bmatrix},
		\quad
		p = \sin(2\pi (x - t)) \cos(2\pi (y - t)).
	\end{equation*}
	This example was implemented using the Modular Finite Element Methods (MFEM) 
	library \cite{mfem,mfem-web} on prismatic space-time meshes. 
	
	We present the velocity and pressure errors, measured in the mesh-dependent 
	$\tnorm{\cdot}_{v'}$-norm and $\norm{\cdot}_{L^2(0,T;L^2(\Omega))}$-norm, 
	respectively, and rates of convergence for different levels of space-time 
	refinement with polynomial degrees $k=2$ and $k=3$ in \Cref{tab:rates}. We 
	observe optimal rates of convergence for the velocity in the 
	$\tnorm{\cdot}_{v'}$ norm as expected from 
	\Cref{thm:vel_error_estimates} as well as optimal rates of convergence for the 
	pressure in the 
	$L^2(0,T;L^2(\Omega))$-norm.
	\begin{center}
		\begin{table}[h]
			\centering
			\begin{tabular}{cc|cc|cc}
				Cells per slab  & Nr. of slabs & 
				$\tnorm{\boldsymbol{u-u}_h}_{v'}$ & Rate 
				& $\|p-p_h\|_{L^2(\Omega \times [0,T])}$ & Rate \\ \hline
				128 &  20 & 8.6e-01 &  -  & 7.9e-03 &  -   \\
				512 &  40 & 2.1e-01 & 2.0 & 2.6e-03 & 1.6  \\
				2048 &  80 & 5.2e-02 & 2.0 & 6.7e-04 & 1.9  \\
				8192 & 160 & 1.3e-02 & 2.0 & 1.7e-04 & 2.0  \\\hline
				128 &  20 & 2.0e-01 &  -  & 6.9e-04 &  -  \\
				512 &  40 & 2.7e-02 & 2.9 & 5.2e-05 & 3.7 \\
				2048 &  80 & 3.5e-03 & 3.0 & 4.7e-06 & 3.5 \\
				8192 & 160 & 4.3e-04 & 3.0 & 5.1e-07 & 3.2 \\
			\end{tabular}
			\caption{Rates of convergence when solving \cref{eq:ns_equations} with 
				$\nu = 10^{-4}$. Note that $\Delta t = 1/(\text{Nr. of slabs})$. Top: 
				using polynomials of degree $k = 2$, bottom: using 
				polynomials of degree $k = 3$.}\label{tab:rates}
		\end{table}
	\end{center}
	\section{Conclusion}
	\label{sec:conclusion}
	In this paper we have analyzed a space-time hybridized discontinuous
	Galerkin method for the evolutionary Navier--Stokes equations that is
	pointwise mass conserving, energy-stable, and pressure-robust. We have
	proven that there exists a solution to the resulting nonlinear
	algebraic system of equations in both two and three spatial
	dimensions, and under a small data hypothesis the solution is unique
	in two spatial dimensions. A priori error estimates in a
	mesh-dependent energy norm for the velocity which are anisotropic in
	space and time and independent of the pressure have been derived.
	Finally, we have verified our theoretical error bounds for the
	velocity with a numerical example.
	
	\appendix
	
	\renewcommand{\thesection}{\Alph{section}}
	
	\section{Projection estimates} \label{appendix:projection_estimates}
	Here, we briefly outline the approximation 
	properties of the projections $\mathcal{P}_h$ and $ \bar{\mathcal{P}}_h$ 
	introduced
	in \Cref{def:space_time_kernel_proj}.
	We require some results from \cite{Dolejsi:book} adapted to the 
	present setting:
	\begin{thm}{\cite[Theorem 6.9]{Dolejsi:book}} \label{Thm:interchange_proj}
		The projections defined in \Cref{def:space_time_kernel_proj} exist and are 
		unique. 
		Furthermore, for all $n=0,\dots, N-1$,
		\begin{equation*}
			(\mathcal{P}_h v)|_{I_n} = \mathcal{P}_h(P_h v)|_{I_n} = P_h (\mathcal{P}_h 
			v|_{I_n}), \quad
			(\bar{\mathcal{P}}_h v)|_{I_n} = \bar{\mathcal{P}}_h(\bar{P}_h v)|_{I_n} = 
			\bar{P}_h (\bar{\mathcal{P}}_h 
			v|_{I_n}).
		\end{equation*}
	\end{thm}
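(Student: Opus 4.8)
The plan is to adapt the proof of \cite[Theorem 6.9]{Dolejsi:book} to the present divergence-free, hybridized setting. The structural facts we exploit are: $P_h$ (resp.\ $\bar P_h$) is the $L^2(\Omega)$-orthogonal (resp.\ $L^2(\Gamma)$-orthogonal) projection onto $V_h^{\text{div}}$ (resp.\ $\bar V_h$), hence idempotent and self-adjoint; $P_h$ and $\bar P_h$ act pointwise in time; and the natural codomain of $\mathcal{P}_h$ (resp.\ $\bar{\mathcal{P}}_h$) consists of time-polynomials taking values in $V_h^{\text{div}}$ (resp.\ $\bar V_h$), so that on each slab we seek $(\mathcal{P}_h v)|_{I_n}\in P_k(I_n;V_h^{\text{div}})$ --- consistent with the fact that $\mathcal{P}_h u$ appears elsewhere only as an element of $\mathcal{V}_h^{\text{div}}$.

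First I would establish existence and uniqueness on a fixed slab $I_n$ by a dimension count followed by an injectivity argument. The endpoint condition of \cref{def:space_time_kernel_proj} imposes $\dim V_h^{\text{div}}$ constraints and the orthogonality \cref{eq:space_time_kernel_proj}, tested against $P_{k-1}(I_n;V_h^{\text{div}})$, imposes $k\dim V_h^{\text{div}}$ constraints, matching $\dim P_k(I_n;V_h^{\text{div}}) = (k+1)\dim V_h^{\text{div}}$; hence it suffices to show that $v=0$ forces $p:=(\mathcal{P}_h v)|_{I_n}=0$. Testing \cref{eq:space_time_kernel_proj} with $v_h=\partial_t p\in P_{k-1}(I_n;V_h^{\text{div}})$ and integrating by parts in time gives $\norm{p(t_{n+1}^-)}_{L^2(\Omega)}^2 = \norm{p(t_n^+)}_{L^2(\Omega)}^2$; the homogeneous endpoint condition makes the left-hand side vanish, so $p(t_n^+)=0$, whence $p(t) = (t-t_n)q(t)$ for some $q\in P_{k-1}(I_n;V_h^{\text{div}})$. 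Testing once more with $v_h = q$ yields $\int_{I_n}(t-t_n)\norm{q}_{L^2(\Omega)}^2\dif t=0$, so $q\equiv 0$ and $p\equiv 0$. The same argument --- with $L^2(\Gamma)$ in place of $L^2(\Omega)$ and $\bar V_h$ in place of $V_h^{\text{div}}$ --- handles $\bar{\mathcal{P}}_h$ via \cref{eq:facet_space_time_proj}.

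For the commutation identities, $(\mathcal{P}_h v)|_{I_n}=P_h(\mathcal{P}_h v|_{I_n})$ is immediate from existence, since $(\mathcal{P}_h v)|_{I_n}$ already takes values in $V_h^{\text{div}}$ and $P_h$ restricts to the identity there. For $(\mathcal{P}_h v)|_{I_n}=\mathcal{P}_h(P_h v)|_{I_n}$, I would verify that $w:=\mathcal{P}_h(P_h v)|_{I_n}$ satisfies the two defining properties of $(\mathcal{P}_h v)|_{I_n}$ and then invoke uniqueness. Since $P_h$ is idempotent and $(P_h v)(t_{n+1}^-)\in V_h^{\text{div}}$, the endpoint value of $w$ is $(P_h v)(t_{n+1}^-)$, which is exactly the endpoint value prescribed for $\mathcal{P}_h v$. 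For the orthogonality, $L^2(\Omega)$-orthogonality of $P_h$ gives $(v(t)-(P_h v)(t),v_h(t))_{L^2(\Omega)}=0$ for a.e.\ $t$ and every $v_h\in P_{k-1}(I_n;V_h^{\text{div}})$, hence $\int_{I_n}(v-P_hv,v_h)_{\mathcal{T}_h}\dif t=0$; combining this with the defining orthogonality of $\mathcal{P}_h(P_hv)$ gives $\int_{I_n}(v-w,v_h)_{\mathcal{T}_h}\dif t=\int_{I_n}(P_hv-w,v_h)_{\mathcal{T}_h}\dif t=0$, so $w$ satisfies both conditions and uniqueness forces $w=(\mathcal{P}_h v)|_{I_n}$. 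The facet identities follow identically from idempotency and $L^2(\Gamma)$-orthogonality of $\bar P_h$ applied to \cref{eq:facet_space_time_proj}.

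I expect the only genuine obstacle to be bookkeeping rather than analysis: one must pin down that the codomains of $\mathcal{P}_h$ and $\bar{\mathcal{P}}_h$ are $P_k(I_n;V_h^{\text{div}})$ and $P_k(I_n;\bar V_h)$ respectively --- so that the dimension count closes and $P_h$, $\bar P_h$ act as the identity there --- and one must check that the slab-wise definitions, glued together with $(\mathcal{P}_h v)(t_0^-)=P_h v(t_0)$, produce a well-defined element of $\mathcal{V}_h$ whose prescribed left trace on each $I_n$, $n\ge 1$, is consistent. This is precisely the content of \cite[Theorem 6.9]{Dolejsi:book}, and only minor modifications are required.
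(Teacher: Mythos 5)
The paper offers no proof of this statement---it is quoted directly from \cite[Theorem 6.9]{Dolejsi:book}---so there is no in-paper argument to compare against; your write-up is essentially the standard argument of that reference, adapted to the hybridized, divergence-free setting. Your proof is correct: the square-system dimension count together with the injectivity argument (test the homogeneous problem with $\partial_t p$ to get $p(t_n^+)=0$, factor $p=(t-t_n)q$, then test with $q$) gives existence and uniqueness, the commutation identities follow from idempotency and pointwise-in-time orthogonality of $P_h$ and $\bar P_h$ combined with that uniqueness, and you rightly flag the one genuine subtlety, namely that the codomain of $\mathcal{P}_h$ (resp.\ $\bar{\mathcal{P}}_h$) must be read as $P_k(I_n;V_h^{\text{div}})$ (resp.\ $P_k(I_n;\bar V_h)$), without which the defining conditions would be underdetermined and $P_h(\mathcal{P}_h v|_{I_n})=(\mathcal{P}_h v)|_{I_n}$ would not be immediate.
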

	\noindent
	We first introduce a temporal ``DG-projection"
	$P^t:C(I_n) \to P_k(I_n)$ satisfying
	$\int_{I_n} \del{P^t w(t) - w(t)} v \dif t = 0$ for all
	$v \in P_{k-1}(I_n)$ and such that $P^t w(t_{n+1}^-) = w(t_{n+1}^-)$
	(see, e.g., \cite[Chapter 12]{Thomee:book} or \cite[Lemma
	6.11]{Dolejsi:book}). For $u \in H^{k+1}(I_n)$ this projection
	satisfies
	\begin{equation} \label{eq:time_proj_estimates}
		\norm[0]{u - P^t u}_{H^s(I_n)} \lesssim \Delta t^{r-s} 
		|u|_{H^r(I_n)}, \quad 0 \le s \le 1 \le r \le k.
	\end{equation}

	\begin{lem} \label{lem:proj_interchange}
		Let $\varphi \in C(I_n;V_h^{\text{div}})$ and $\bar \psi \in C(I_n;\bar 
		V_h)$. Then
		\begin{subequations}
			\begin{align}
				\label{eq:proj_inter_1}
				\mathcal{P}_h \varphi(x,t) &= P^t \varphi(x,t) && \forall x\in \cbr{K,F},\ \forall \cbr{K \in \mathcal{T}_h, F \in \mathcal{F}_h},
				\\
				\label{eq:proj_inter_2}
				\bar{\mathcal{P}}_h \bar \psi(x,t) &= P^t\bar \psi(x,t) && \forall x \in F,\ \forall F \in \mathcal{F}_h.
			\end{align}
		\end{subequations}
	\end{lem}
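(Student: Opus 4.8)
The plan is to check that the fiberwise temporal projection appearing on the right-hand sides of \cref{eq:proj_inter_1} and \cref{eq:proj_inter_2} satisfies the defining conditions of the space--time projections of \Cref{def:space_time_kernel_proj}; uniqueness of the latter, guaranteed by \Cref{Thm:interchange_proj}, then forces equality. Throughout I would work slab by slab on a fixed $I_n$.

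For \cref{eq:proj_inter_1}, fix a basis $\cbr{e_j}$ of the finite-dimensional space $V_h^{\text{div}}$ and write $\varphi(x,t) = \sum_j c_j(t)\, e_j(x)$ with $c_j \in C(I_n)$. Applying $P^t$ in the time variable only, linearity gives $P^t\varphi(x,t) = \sum_j (P^t c_j)(t)\, e_j(x)$, so $P^t\varphi \in P_k(I_n; V_h^{\text{div}})$ and, in particular, computing $P^t\varphi$ pointwise in $x$ coincides with applying $P^t$ coefficientwise. Since $\varphi(t_{n+1}^-) \in V_h^{\text{div}}$ we have $P_h\varphi(t_{n+1}^-) = \varphi(t_{n+1}^-)$, while the endpoint property of $P^t$ gives $(P^t\varphi)(t_{n+1}^-) = \varphi(t_{n+1}^-)$; hence $P^t\varphi$ matches the endpoint condition defining $\mathcal{P}_h\varphi$. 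For the orthogonality condition, let $v_h \in P_{k-1}(I_n; V_h^{\text{div}})$, expand $v_h = \sum_j d_j(t) e_j$ with $d_j \in P_{k-1}(I_n)$, and use that $(\cdot,\cdot)_{\mathcal{T}_h}$ agrees with the $L^2(\Omega)$ inner product on $V_h$ to write
\begin{equation*}
	\int_{I_n} \del{\varphi - P^t\varphi, v_h}_{\mathcal{T}_h} \dif t = \sum_{j,l} (e_l,e_j)_{\mathcal{T}_h} \int_{I_n} \del{c_l - P^t c_l}\, d_j \dif t = 0,
\end{equation*}
since each $c_l - P^t c_l$ is $L^2(I_n)$-orthogonal to $P_{k-1}(I_n)$ and $d_j \in P_{k-1}(I_n)$. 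By uniqueness of $\mathcal{P}_h$ from \Cref{Thm:interchange_proj}, $\mathcal{P}_h\varphi = P^t\varphi$, which is \cref{eq:proj_inter_1}. Note that \Cref{Thm:interchange_proj} also yields $\mathcal{P}_h\varphi = P_h\mathcal{P}_h\varphi$, confirming that $\mathcal{P}_h\varphi$ and $P^t\varphi$ live in the same space $P_k(I_n;V_h^{\text{div}})$, so that the comparison above is legitimate.

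The proof of \cref{eq:proj_inter_2} is identical after the obvious substitutions: replace $V_h^{\text{div}}$ by $\bar V_h$, the operator $P_h$ by $\bar P_h$ (which is again the identity on $\bar V_h$), the broken volume inner product $(\cdot,\cdot)_{\mathcal{T}_h}$ by the facet inner product $(\cdot,\cdot)_{\partial\mathcal{T}_h}$, and the defining relations of $\mathcal{P}_h$ by those of $\bar{\mathcal{P}}_h$. I do not anticipate a genuine obstacle here: the only point needing a little care is the bookkeeping, carried out above via the basis expansion, that interpreting $P^t$ pointwise in $x$ produces an element of the tensor-product polynomial space in which the space--time projection is sought, so that the uniqueness statement of \Cref{Thm:interchange_proj} applies.
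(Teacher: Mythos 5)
Your argument is correct and is essentially the proof the paper appeals to: the paper simply cites \cite[Lemma 6.11]{Dolejsi:book}, whose standard argument is exactly your basis-expansion showing that the coefficientwise temporal projection $P^t\varphi$ lies in $P_k(I_n;V_h^{\text{div}})$ (resp. $P_k(I_n;\bar V_h)$), satisfies the endpoint and orthogonality conditions of \Cref{def:space_time_kernel_proj}, and hence coincides with $\mathcal{P}_h\varphi$ (resp. $\bar{\mathcal{P}}_h\bar\psi$) by uniqueness. No gaps.
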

	\begin{proof}
		The proofs of \cref{eq:proj_inter_1} and \cref{eq:proj_inter_2} follow the proof of \cite[Lemma 
		6.11]{Dolejsi:book} with minor modifications.
	\end{proof}
	\noindent With \Cref{lem:proj_interchange} in hand,
	we can prove the following results:
	\begin{thm} \label{thm:projection_estimates}
		Let $k \ge 1$, $m \in \cbr{0,1}$, and $u \in H^{k+1}(I_n;H_0^1(\Omega)^d\cap 
		H^2(\Omega)^d) \cap C(I_n;H^{k+1}(\Omega)^d)$. Let $\mathcal{P}_h$
		and $\bar{\mathcal{P}}_h$ be the projections defined in \Cref{def:space_time_kernel_proj}.  
		Let $l = 0$ if $m \le 1$ and $l = m$ if $m=2$. Then, the following estimates 
		hold:
		\begin{subequations}
			\begin{align} 
				\sum_{K \in \mathcal{T}_h} \int_{I_n} & |u - \mathcal{P}_h 
				u|_{H^m(K)}^2 \dif t \label{eq:spatial_der_estimate} \\ 
				&\lesssim h^{2\del{k-m+1}} 
				\norm{u}_{L^2(I_n;H^{k+1}(\Omega))}^2 + h^{-l}\Delta 
				t^{2k+2}\norm{u}_{H^{k+1}(I_n;H^m(\Omega))}^2 ,
				\notag
				\\
				\label{eq:proj_bnd_faces}
				\sum_{K\in \mathcal{T}_h} h_K^{-1} \int_{I_n}& \norm{ 
					\mathcal{P}_h  u - \bar{\mathcal{P}}_hu}_{L^2(\partial K)}^2 \dif 
				t \lesssim
				h^{2k} 
				\norm{u}_{L^2(I_n;H^{k+1}(\Omega))}^2, \\ 	\label{eq:proj_bnd_grad_faces}
				\sum_{K\in \mathcal{T}_h} h_K \int_{I_n} & \norm{ \nabla (u -
					\mathcal{P}_h u)n}_{L^2(\partial K)}^2 \dif 
				t \\ & \lesssim \notag
				h^{2k} 
				\norm{u}_{L^2(I_n;H^{k+1}(\Omega))}^2 + \Delta 
				t^{2k+2}\norm{u}_{H^{k+1}(I_n;H^2(\Omega))}^2.
			\end{align}
		\end{subequations}
	\end{thm}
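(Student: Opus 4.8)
The plan is to reduce each of the three space-time estimates to a purely spatial bound (namely \Cref{lem:approx_prop_spat_proj} together with the analogous, well-known estimate for $\bar{P}_h$) plus a purely temporal bound (\cref{eq:time_proj_estimates}), by exploiting the commutation structure recorded in \Cref{Thm:interchange_proj} and \Cref{lem:proj_interchange}. First, since $u \in C(I_n;H^{k+1}(\Omega)^d)$ we have $P_h u \in C(I_n;V_h^{\text{div}})$, so \Cref{Thm:interchange_proj} gives $\mathcal{P}_h u = \mathcal{P}_h(P_h u)$, and \cref{eq:proj_inter_1} then identifies $\mathcal{P}_h(P_h u)$ with $P^t(P_h u)$ at every point of each $K$ and each $F$; likewise $\bar{\mathcal{P}}_h u = P^t(\bar{P}_h u)$ on facets by \cref{eq:proj_inter_2}. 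Thus, on every element and facet,
\[
u - \mathcal{P}_h u = (u - P_h u) + \del{P_h u - P^t(P_h u)}, \qquad \mathcal{P}_h u - \bar{\mathcal{P}}_h u = P^t\del{\gamma(P_h u) - \bar{P}_h u},
\]
which cleanly separates a spatial error from a temporal one. I will use throughout that $P_h$ commutes with $\partial_t$, that $P^t$ commutes with spatial differential operators, and that $P^t$ is $L^2(I_n)$-stable with a constant independent of $\Delta t$ (by scaling to the reference interval).

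For the element estimate \cref{eq:spatial_der_estimate}, the first term is bounded by applying \cref{eq:spat_proj_el} pointwise in $t$ and integrating over $I_n$, yielding $h^{2(k-m+1)}\norm{u}_{L^2(I_n;H^{k+1}(\Omega))}^2$. For the second term, commute $\nabla^m$ past $P^t$ and then $\partial_t^{k+1}$ past $P_h$ and invoke \cref{eq:time_proj_estimates} with $r=k+1$:
\[
\sum_K \int_{I_n} |P_h u - P^t(P_h u)|_{H^m(K)}^2 \dif t \lesssim \Delta t^{2k+2} \sum_K \norm{\nabla^m P_h(\partial_t^{k+1}u)}_{L^2(I_n;L^2(K))}^2 .
\]
The missing ingredient is a \emph{broken $H^m$-stability} of $P_h$, $\sum_K \norm{\nabla^m P_h w}_{L^2(K)}^2 \lesssim h^{-l}\norm{w}_{H^m(\Omega)}^2$ with $l=0$ for $m\le 1$ and $l=m$ for $m=2$. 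For $m=0$ this is the $L^2$-stability of $P_h$; for $m\ge1$ one compares $P_h w$ with the elementwise $L^2$-projection $\Pi_V w$, bounds $\nabla^m$ of $P_h w - \Pi_V w$ by a local inverse inequality, and controls $\norm{P_h w - \Pi_V w}_{L^2(K)}$ by $\norm{w - P_h w}_{L^2(K)} + \norm{w - \Pi_V w}_{L^2(K)}$; the first summand is handled globally using that $w=\partial_t^{k+1}u$ is divergence free, so $\Pi_{\text{BDM}}w \in V_h^{\text{div}}$ is an admissible competitor, while the $\Pi_V$-terms are standard, and the case $m=2$ costs one further inverse inequality, hence $h^{-2}$. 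Taking $w=\partial_t^{k+1}u$ and integrating in time completes \cref{eq:spatial_der_estimate}.

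For the facet estimates: for \cref{eq:proj_bnd_faces}, the $L^2(I_n)$-stability of $P^t$ reduces the claim to the pointwise-in-time bound $\sum_K h_K^{-1}\norm{\gamma(P_h u)(t) - (\bar{P}_h u)(t)}_{L^2(\partial K)}^2 \lesssim h^{2k}|u(t)|_{H^{k+1}(\Omega)}^2$, which follows from \cref{eq:spat_proj_face}, the analogous estimate for $\bar{P}_h$, and the triangle inequality; integrating over $I_n$ gives the result. For \cref{eq:proj_bnd_grad_faces}, split $\nabla(u-\mathcal{P}_h u)n = \nabla(u - P_h u)n + \del{\nabla P_h u - P^t(\nabla P_h u)}n$. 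The first piece is treated exactly as in the proof of \Cref{lem:approx_prop_spat_proj}: an $H^1(K)$-trace inequality, then \cref{eq:spat_proj_el} with $m=1$ and $m=2$ and the Cauchy--Schwarz inequality, give $\sum_K h_K \norm{\nabla(u - P_h u)n}_{L^2(\partial K)}^2 \lesssim h^{2k}|u|_{H^{k+1}(\Omega)}^2$, and we integrate in time. The second piece is a piecewise polynomial, so a local discrete trace inequality converts $h_K\norm{\cdot}_{L^2(\partial K)}^2$ into $\norm{\cdot}_{L^2(K)}^2$; then \cref{eq:time_proj_estimates} together with the broken $H^1$-stability of $P_h$ from the previous step yields $\Delta t^{2k+2}\norm{u}_{H^{k+1}(I_n;H^2(\Omega))}^2$.

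No single step is deep, but the one demanding genuine care is the broken $H^m$-stability of the \emph{global} divergence-free projection $P_h$ used above: unlike the elementwise $L^2$-projection it does not localize, so its $H^1$-stability has to be extracted from the best-approximation property together with a local inverse inequality, and it is exactly here that the hypothesis that the time derivatives of $u$ are divergence free enters, via the BDM interpolant. A secondary bookkeeping point is that \cref{eq:time_proj_estimates} must be invoked at regularity $r=k+1$, the endpoint superconvergence of the DG time projection, which is the source of the $\Delta t^{2k+2}$ rate.
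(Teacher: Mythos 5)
Your proof is correct and follows essentially the same route as the paper: reduce everything to the spatial estimates of \Cref{lem:approx_prop_spat_proj} (and their analogue for $\bar{P}_h$) plus the temporal estimate \cref{eq:time_proj_estimates}, via \Cref{Thm:interchange_proj}, \Cref{lem:proj_interchange}, Fubini, and the $L^2(I_n)$-stability of $P^t$; the broken $H^m$-stability of $P_h$ you spell out is precisely the ingredient the paper leaves implicit in its citation of \cite{Dolejsi:book}. The only minor deviation is \cref{eq:proj_bnd_grad_faces}, which the paper deduces directly from \cref{eq:spatial_der_estimate} with $m=1,2$ through a local trace inequality, whereas you re-split into spatial and temporal pieces and use a discrete trace inequality plus the $H^1$-stability of $P_h$; both arguments yield the stated bound.
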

	\begin{proof}
		First, \cref{eq:spatial_der_estimate} can be shown in a similar fashion to 
		\cite[Lemmas 6.17 and 6.18]{Dolejsi:book} using the approximation properties of 
		the spatial projection $P_h$ given in \Cref{lem:approx_prop_spat_proj} and the 
		approximation 
		properties of $P^t$ given in \cref{eq:time_proj_estimates}. 
		For \cref{eq:proj_bnd_faces} we recall that $(\mathcal{P}_h v)|_{I_n} = 
		\mathcal{P}_h(P_h v)|_{I_n}$ and $(\bar{\mathcal{P}}_h v)|_{I_n} = 
		\bar{\mathcal{P}}_h(\bar{P}_h 
		v)|_{I_n}$
		by \Cref{Thm:interchange_proj}, so by \Cref{lem:proj_interchange}, Fubini's 
		theorem, and the stability of 
		$P^t$ in the $L^2(I_n)$ norm, we have
		\begin{equation*}
			\begin{split}
				\sum_{K\in \mathcal{T}_h} \frac{1}{h_K} \int_{I_n} \norm[0]{ 
					\mathcal{P}_h  u - \bar{\mathcal{P}}_hu}_{L^2(\partial K)}^2 \dif t
				&\le C\sum_{K\in \mathcal{T}_h} \frac{1}{h_K} \int_{I_n} \norm[0]{ 
					P_h  u - \bar P_h u}_{L^2(\partial 
					K)}^2\dif t.
			\end{split}
		\end{equation*}
		We conclude using the triangle inequality and the approximation properties 
		of the spatial projections $P_h$ and $\bar{P}_h$. Finally, \cref{eq:proj_bnd_grad_faces}
		follows from \cref{eq:spatial_der_estimate} after noting that a local
		trace inequality yields
		\begin{equation*}
			h_K \norm{ \nabla (u -
				\mathcal{P}_h u)n}_{L^2(\partial K)}^2 \le |u -
			\mathcal{P}_h u|^2_{H^1(K)} + h_K^2 |u -
			\mathcal{P}_h u|^2_{H^2(K)}.
		\end{equation*}		
	\end{proof}
	
	\bibliographystyle{amsplain.bst}
	\bibliography{references}
\end{document}